\newcommand{\sumin}{\sum_{i=1}^n}
\def\g{\gamma}
\long\def\beginskip#1\endskip{}
\def\endskip{}
\newcommand{\diamm}{\text{diam$_n$}}
\newcommand{\diam}{\mathop{\rm diam}\nolimits}
\newcommand{\ra}{\rightarrow}
\newcommand{\da}{\downarrow}
\renewcommand{\k}{\kappa}
\newcommand{\E}{\mathord{\rm E}}
\newcommand{\Var}{\mathop{\rm var}\nolimits}
\let\var\Var
\newcommand{\sd}{\mathop{\rm sd}\nolimits}
\newcommand{\eps}{\varepsilon}
\newcommand{\RR}{\mathbb{R}}
\renewcommand{\phi}{\varphi}
\newcommand{\given}{\,|\,}
\renewcommand{\l}{\lambda}
\renewcommand{\th}{\theta}
\renewcommand{\t}{\tau}
\newcommand{\z}{\zeta}
\newcommand{\e}{\varepsilon}
\renewcommand{\d}{\delta}
\renewcommand{\a}{\alpha}
\newcommand{\argmax}{\mathop{\rm argmax}}
\newcommand{\mmle}{\widehat\tau_M}
\newcommand{\1}{\textbf{1}}
\newcommand{\cS}{\mathcal{S}}
\newcommand{\cL}{\mathcal{L}}
\newcommand{\cM}{\mathcal{M}}
\newtheorem{theorem}{Theorem}[section]
\newtheorem{proposition}[theorem]{Proposition}
\newtheorem{lemma}[theorem]{Lemma}
\theoremstyle{definition}
\newtheorem{remark}[theorem]{Remark}
\newtheorem{ex}[theorem]{Example}
\newtheorem{cond}{Condition}
\newtheorem{assumption}{Assumption}
\numberwithin{equation}{section}
\begin{document}

\begin{frontmatter}

\title{Uncertainty quantification for the horseshoe}
\runtitle{Uncertainty quantification for the horseshoe}


\author{\fnms{St\'ephanie} \snm{van der Pas}\ead[label=e1]{svdpas@math.leidenuniv.nl}\thanksref{t1,m1}},
\author{\fnms{Botond} \snm{Szab\'o}\ead[label=e2]{b.t.szabo@math.leidenuniv.nl}\thanksref{t1,t2,m1,m2}}
\and
\author{\fnms{Aad} \snm{van der Vaart}\ead[label=e3]{avdvaart@math.leidenuniv.nl}\thanksref{t2,m1}}
\address{Leiden University\thanksmark{m1} 
and Budapest University of Technology and Economics\thanksmark{m2}\\
\printead{e1}\\ \printead{e2}\\\printead{e3}}

\thankstext{t1}{Research supported by the Netherlands Organization for Scientific Research.}
\thankstext{t2}{The research leading to these results has received funding from the European
  Research Council under ERC Grant Agreement 320637.}

\runauthor{van der Pas et al.}

\begin{abstract}
We investigate the credible sets and marginal credible intervals resulting from the horseshoe prior in the sparse multivariate normal means model. We do so in an adaptive setting without assuming knowledge of the sparsity level (number of signals). We consider both the hierarchical Bayes method of putting a
prior on the unknown sparsity level and the empirical Bayes method with the sparsity level estimated
by maximum marginal likelihood. We show that credible balls and marginal credible intervals have
good frequentist coverage and optimal size if the sparsity level of the prior is 
set correctly. By general theory honest confidence sets cannot adapt in size to an unknown
sparsity level. Accordingly the hierarchical and empirical Bayes credible sets based on the
horseshoe prior are not honest over the full parameter space. We show that this
is due to over-shrinkage for certain parameters and characterise the set
of parameters for which credible balls and marginal credible intervals
do give correct uncertainty quantification. In particular we show that
the fraction of false discoveries by the marginal Bayesian procedure is controlled
by a correct choice of cut-off.
\end{abstract}

\begin{keyword}[class=AMS]
\kwd[Primary ]{62G15}
\kwd[; secondary ]{62F15}
\end{keyword}

\begin{keyword}
\kwd{credible sets}
\kwd{horseshoe}
\kwd{sparsity}
\kwd{nearly black vectors}
\kwd{normal means problem}
\kwd{frequentist Bayes}
\end{keyword}

\end{frontmatter}

\section{Introduction}
Despite the ubiquity of problems with sparse structures, and the large amount of research effort into finding consistent and minimax optimal estimators for the underlying sparse structures \cite{Tibshirani1996, Johnstone2004, Castillo2012,
  Castillo2015, Jiang2009,Griffin2010, Johnson2010, Ghosh2015, Caron2008, Bhattacharya2014, Bhadra2015,
  Rockova2015}, the number of options for uncertainty quantification in the sparse normal means problem is very limited. In this paper, we show that the horseshoe credible sets and intervals are effective tools for uncertainty quantification, unless the underlying signals are too close to the universal threshold in a sense that is made precise in this work. We first introduce the sparse normal means problem, and our measures of quality of credible sets.

The sparse normal means problem, also known as the sequence model, is frequently studied and considered as a test case for sparsity methods, and has some applications in, for example, image processing (\cite{Johnstone2004}). A random vector $Y^n = (Y_1, \ldots, Y_n)$ of observations, taking  values in $\mathbb{R}^n$, is modelled as the sum of  fixed means and noise:
\begin{equation}
\label{EqObservation}
Y_i = \th_{0,i} + \varepsilon_i, \quad i = 1, \ldots, n,
\end{equation}
where the $\varepsilon_i$ follow independent standard normal distributions. The sparsity assumption made on the mean vector $\th_0 =(\th_{0,1},\ldots,\th_{0,n})$ is that it is nearly black, which stipulates that most of the means are zero, except for $p_n = \sum_{i=1}^n\1\{\th_{0,i} \neq 0\}$ of them. The sparsity level $p_n$ is unknown, and assumed to go to infinity as $n$ goes to infinity, but at a slower rate than $n$: $p_n \to \infty$ and $p_n = o(n)$.

This paper studies the Bayesian approach based on the \emph{horseshoe prior}
\cite{Carvalho2010, Carvalho2009, Scott2011, Polson2012, Polson2012-2}. The horseshoe prior is popular due to its good performance in simulations and under theoretical study (e.g. \cite{Carvalho2010, Carvalho2009, Polson2012,
  Polson2010, Bhattacharya2014, Armagan2013, vdPas, Datta2013}). The horseshoe prior is a scale mixture of normals, with a half-Cauchy prior on the variance. It is given by
\begin{equation}
\begin{aligned}
\th_i \given \l_i, \t &\sim \mathcal{N}(0,\l_i^2\t^2),\\
\l_i &\sim C^+(0,1), \qquad i = 1, \ldots, n.
\end{aligned}
\label{EqHorseShoePrior}
\end{equation}
The ``global hyperparameter" $\t$ was determined to be important towards the minimax optimality of the horseshoe posterior mean as an estimator of $\theta_0$ (\cite{vdPas}). The results in \cite{vdPas} show that $\t$ can be interpreted as the proportion of nonzero parameters, up to a logarithmic factor. If it is set at a value of the order $(p_n/n)\sqrt{\log{n/p_n}}$, then the horseshoe posterior contracts around the true $\th_0$ at the (near) minimax estimation rate for quadratic loss.  Adaptive posterior contraction, where the number $p_n$ is not assumed known but estimated by empirical Bayes or hierarchical Bayes as in this paper,  was proven for estimators of $\tau$ that are bounded above by $(p_n/n)\sqrt{\log{n/p_n}}$ with high probability in \cite{contractionpaper}.

The adaptive concentration of the horseshoe posterior is encouraging towards the usefulness of the horseshoe credible balls for uncertainty quantification, as in the Bayesian framework the spread of the posterior distribution
over the parameter space is used as an indication of the error in estimation. It follows from general results of \cite{Li, RobinsvdV,Nickl2013} that honest uncertainty quantification 
is irreconcilable with adaptation to sparsity. Here \emph{honesty} of confidence sets $\hat{C}_n=\hat{C}_n(Y^n)$ relative to
a parameter space $\tilde \Theta\subset\RR^n$ means that
$$\liminf_{n\rightarrow\infty}\inf_{\th_0\in \tilde\Theta}P_{\th_0}(\th_0\in\hat{C}_n)\geq 1-\alpha,$$
for some prescribed confidence level $1-\alpha$. Furthermore, \emph{adaptation} to a partition 
$\tilde\Theta=\cup_{p\in P}\Theta_{p}$ of the parameter space into submodels $\Theta_p$ 
indexed by a hyper-parameter $p\in P$, means that, for every $p\in P$ and
for $r_{n,p}$ the (near) minimax rate of estimation relative to $\Theta_p$,
$$\liminf_{n\rightarrow\infty}\inf_{\th_0\in \Theta_{p}}P_{\th_0}(\diam(\hat C_n)\leq r_{n,p})=1.$$
This second property ensures that the good coverage is not achieved by taking conservative, 
overly large confidence sets, but that these sets have ``optimal'' diameter.
In our present situation we may choose the models $\Theta_p$ equal to nearly black bodies with $p$ nonzero
coordinates, in which case $r_{n,p}^2\asymp p\log (n/p)$, if $p\ll n$.
Now it is shown in \cite{Li} that confidence regions that are honest
over all parameters in $\tilde\Theta=\RR^n$ cannot be of square diameter smaller than $n^{1/2}$,
which can be (much) bigger than $p\log (n/p)$, if $p\ll n^{1/2}$. Similar restrictions are
valid for honesty over subsets of $\RR^n$, as follows from testing arguments (see the appendix in \cite{RobinsvdV}). 
Specifically, in \cite{Nickl2013} it is shown that confidence regions that adapt in size to nearly black bodies
of two different dimensions $p_{n,1}\ll p_{n,2}$ cannot be honest over the union of these two bodies, but only over the union
of the smallest body and the vectors in the bigger body that are at some distance from the smaller body.
As both the full Bayes and empirical Bayes horseshoe posteriors contract at the near square minimax rate $r_{n,p}$,
adaptively over every nearly black body, it follows that their credible balls cannot be honest in the full
parameter space. 

In Bayesian practice credible balls are nevertheless used as if they were confidence sets.
A main contribution of the present paper is to investigate for which parameters $\th_0$ this practice
is justified. We characterise the parameters for which the
credible sets of the horseshoe posterior distribution give good coverage, and the ones for which they do not. 
We investigate this both for the empirical and hierarchical Bayes approaches, both when $\t$ is
set deterministically, and in adaptive settings where the number of nonzero means is unknown. 
In the case of deterministically chosen $\t$, uncertainty quantification is essentially correct provided
$\t$ is chosen not smaller than $(p_n/n)\sqrt{\log{n/p_n}}$. For the more interesting full and
empirical Bayes approaches, the correctness depends on the sizes of the nonzero coordinates 
in $\th_0$. If a fraction of the nonzero coordinates is  detectable, meaning that they exceed the 
``threshold'' $\sqrt{2\log(n/p_n)}$, then uncertainty quantification by a credible ball
is correct up to a multiplicative factor in the radius. More generally, this is true if the
sum of squares of the non-detectable nonzero coordinates is suitably dominated,
as in \cite{BelitserNurushev}.


We show in this work that the uncertainty quantification given by the horseshoe posterior distribution
is ``honest'' only conditionally on certain prior assumptions on the parameters.
In contrast, interesting recent work within the context of the sparse linear regression model
is directed at obtaining confidence sets that are honest in the full parameter set \cite{ZhangZhang,Geeretal,LiuYu}. 
The resulting methodology, appropriately 
referred to as  ``de-sparsification'', might in our present very special case of the regression model
reduce to confidence sets for $\th_0$ based on the trivial pivot $Y^n-\th_0$, or functions thereof,
such as marginals. These confidence sets would have uniformly correct coverage, but
be very wide, and not employ the presumed sparsity of the parameter. This seems a high price to
pay;  sacrificing some coverage so as to retain some shrinkage may not be unreasonable.
Our contribution here is to investigate in what way the horseshoe prior makes this trade-off. In addition, we provide a specific example of an estimator that meets our conditions for adaptive coverage: the maximum marginal likelihood estimator (MMLE).  The MMLE is introduced in detail in  \cite{contractionpaper}. In this paper, we expand on the MMLE results in \cite{contractionpaper} by showing that it meets the imposed conditions for adaptive coverage as well.

Uncertainty quantification in the case of the sparse normal mean model was 
addressed also in the recent paper \cite{BelitserNurushev}. These authors consider a mixed Bayesian-frequentist
procedure, which leads to a mixture over sets $I\subset\{1,2,\ldots, n\}$ of projection estimators $(Y_i\1_{i\in I})$,
where the weights over $I$ have a Bayesian interpretation and each projection estimator comes with
a distribution. Treating this as a posterior distribution, the authors obtain credible balls for the parameter, which they show to be honest
over parameter vectors $\th_0$ that satisfy an ``excessive-bias  restriction''. This interesting procedure
has similar properties as the horseshoe posterior distribution studied in the present paper. While initially
we had derived our results under a stronger ``self-similarity'' condition, we present here the results
under a slight weakening of the ``excessive-bias restriction'' introduced in  \cite{BelitserNurushev}.

The performance of adaptive Bayesian methods for uncertainty quantification for the estimation of functions 
has been  previously considered in 
\cite{SzVZ2,szabo:vaart:zanten:2015c,serra:krivobokova:2014,castillo:nickl:2014,ray:2015,Sniekers1,Sniekers2,Sniekers3, belitser:2015, rousseau:sz:2016}.
These papers focus on adaptation to functions of varying regularity. This runs into similar problems
of honesty of credible sets, but the ordering by regularity sets the results
apart from the adaptation to sparsity in the present paper.

For single coordinates $\th_{0,i}$ uncertainty quantification by  marginal credible intervals is quite natural. 
Credible intervals can be easily visualised by plotting them versus the index (cf.\ Figure~\ref{fig:marginal}). 
They may also be used as a testing device, for instance by declaring coordinates $i$ for
which the credible interval does not contain 0 to be \emph{discoveries}.
We show that the validity of these intervals depends on the value of the true coordinate. 
On the positive side we show that marginal credible intervals for coordinates
$\th_{0,i}$ that are either close to zero or above the detection boundary are essentially correct. 
In particular, the fraction of false discoveries tends to zero. 
On the negative side the horseshoe posteriors
shrink intervals for intermediate values too much to zero for coverage.
Different from the case of credible balls, these conclusions are
hardly affected by whether the sparseness level $\t$ is set by an oracle or adaptively, based on the data.

The paper is organized as follows. The results for the marginal credible intervals are given in Section \ref{sec:intervals}, and the consequences for the false and true discoveries resulting from testing with the marginal credible intervals are explored in Section \ref{sec:fdr}. The results for the credible balls are given in Section \ref{sec:coverage}.
In all cases, the results are given for  deterministic and general empirical and hierarchical Bayes approaches. We illustrate the coverage properties of the marginal credible sets
computed by empirical and hierarchical Bayes methods, as well as the model selection properties in a simulation study in Section~\ref{sec:simulation}. 
We conclude with appendices containing all proofs not given in the main text.

\subsection{Notation} 
The posterior distribution of $\th$ relative to the
prior \eqref{EqHorseShoePrior} given fixed $\t$ is denoted by  $\Pi(\cdot\given Y^n,\t)$, and the posterior distribution
in the hierarchical setup where $\t$ has received a prior is denoted by $\Pi(\cdot\given Y^n)$.
We use $\Pi(\cdot\given Y^n,\hat\t)$ for the empirical Bayes ``plug-in posterior'' , which is  $\Pi(\cdot\given Y^n,\t)$  with a data-based variable $\hat\t$ substituted for $\t$. 
To emphasize that $\hat\t$ is not conditioned on, we alternatively use 
$\Pi_\t(\cdot\given Y^n)$ for $\Pi(\cdot\given Y^n,\t)$, and  $\Pi_{\hat \t}(\cdot\given Y^n)$ for $\Pi(\cdot\given Y^n,\hat\t)$.

The function $\phi$ denotes the density of the standard normal distribution. The class of nearly black vectors is given by
$\ell_0[p] = \{\th \in \mathbb{R}^n : \sum_{i=1}^n\1\{\th_{i} \neq 0\} \leq p\}$,
and we abbreviate
$$\z_\t = \sqrt{2\log(1/\t)}, \qquad \t_n(p) = (p/n)\sqrt{\log (n/p)}, \qquad \t_n= \t_n(p_n).$$

\section{\label{sec:intervals}Credible intervals}
We study the coverage properties of credible intervals for the individual coordinates $\th_{0,i}$.  We show that the marginal credible intervals fall into three categories, dependent on $\t$. We show that coordinates $\th_{0,i}$ that are either ``small" or ``large"  will be covered, in the sense that within both categories the fraction of correct intervals is arbitrarily close to 1. On the other hand, none of  the ``intermediate" coordinates $\th_{0,i}$ are covered. We show this first for the deterministic case, where the boundaries between the categories are at  multiples of $\tau$ and $\z_\t$ respectively. Furthermore, we show that the results for deterministic marginal credible intervals extend to the adaptive situation
for \emph{any} true parameter $\th_0$, with slight modification of the boundaries
between the three cases of small, intermediate and large coordinates. We elaborate on the implications for model selection in Section \ref{sec:fdr}.

\subsection{Definitions}
Non-adaptive  marginal credible intervals  can be  constructed from the \emph{marginal posterior distributions}
$\Pi(\th: \th_i\in \cdot\given Y^n,\t)$. By the independence of the pairs $(\th_i,Y_i)$ given $\t$, the
$i$th marginal  depends only on the $i$th observation $Y_i$.
We consider intervals of the form
\begin{equation}
\hat{C}_{ni}(L,\t)=\bigl\{\th_i: |\th_i-\hat\th_i(\t)|\leq L\hat r_i(\a,\t)\bigr\},
\label{EqMarginalCredibleInterval}
\end{equation}
where $\hat\th_{i}(\t)=\E(\th_i\given Y_i,\t)$ 
is the marginal posterior mean, $L$ a positive constant, and $\hat r_i(\a,\t)$ is determined so that,
for a given $0<\a\le 1/2$,
$$\Pi\bigl(\th_i:  |\th_i-\hat\th_i(\t)|\leq \hat r_i(\a,\t) \given Y_i,\t\bigr)=1-\alpha.$$

Adaptive empirical Bayes marginal credible intervals are defined by plugging in an estimator
$\widehat\t_n$ for $\t$ in the intervals $\hat{C}_{ni}(L,\t)$ defined by \eqref{EqMarginalCredibleInterval}.
Similarly full Bayes credible intervals $\hat C_{ni}(L)$ are defined from the full Bayes marginal
posterior distributions, centered around the posterior mean.

\subsection{Credible intervals for deterministic $\t$}

The coverage of the marginal credible intervals depends crucially on the value of the true coordinate $\th_{0,i}$.
For given $\t \ra 0$, positive constants $k_S$, $k_M$, $k_L$ and numbers $f_\t\uparrow\infty$ as $\t\ra0$, 
we distinguish three regions (small, medium and large) of signal parameters:
\begin{align*}
\cS&:=\bigl\{1\le i\le n: |\th_{0,i}|\leq k_S \t\bigr\},\\
\cM&:=\bigl\{1\le i\le n:  f_\t\t\leq |\th_{0,i}|\leq k_M \z_\t\bigr\},\\
\cL&:=\bigl\{1\le i\le n: k_L \z_\t\leq |\th_{0,i}|\bigr\}.
\end{align*}
The conditions on the constants and $f_\t$ in the following theorem make that these
three sets may not cover all coordinates $\th_{0,i}$, but their boundaries are almost contiguous.
The following theorem shows that the fractions of coordinates contained in $\cS$ and in $\cL$ that are covered by
the credible intervals are close to 1, whereas no coordinate in $\cM$ is covered.

Let $|\cdot|$ denote the cardinality of a set.

\begin{theorem}\label{thm:marginal}
Suppose that  $k_S>0$, $k_M<1$, $k_L>1$, and $f_\t\uparrow\infty$,  as $\t\ra0$. Then for $\t\ra 0$ and 
any sequence $\g_n\rightarrow c$ for some $0\le c\leq 1/2$, satisfying $\z_{\g_n}\ll \z_\t$,
\begin{align}
P_{\th_0} \Bigl( \frac1{|\cS|}{|\{i\in \cS: \th_{0,i}\in \hat{C}_{ni}(L_{S},\t)\}|}\geq 1-\g_n\Bigr)&\rightarrow 1,\label{eq: margcovS}\\
P_{\th_0} \bigl( \th_{0,i}\notin \hat{C}_{ni}(L,\t) \bigr)\rightarrow 1,\quad\text{for any } L>0 &\text{ and $i\in \cM$},\label{eq: margcovM}\\
P_{\th_0} \Bigl( \frac1{|\cL|}{|\{i\in \cL: \th_{0,i}\in \hat{C}_{ni}(L_L,\t)\}|}\geq 1-\g_n \Bigr)&\rightarrow 1,\label{eq: margcovL}
\end{align}
where $L_S=(2.1/z_\a)\bigl[k_S+(2/\g_n) \z_{\g_n/2}\bigr]$ and 
$L_L=(1.1/z_\a) \z_{\g_n/2}$.
\end{theorem}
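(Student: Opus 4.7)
My plan is to combine pointwise asymptotic estimates for the horseshoe marginal posterior with Gaussian tail bounds on $\e_i=Y_i-\th_{0,i}$, and to convert pointwise statements into the claimed fractions via Markov's inequality. The first step is to assemble the needed asymptotics of $\hat\th_i(\t)(y)=E[\th_i\,|\,Y_i=y,\t]$ and of the credible radius $\hat r_i(\a,\t)(y)$, regarded as functions of $y$. These follow from the explicit mixture-of-normals representation of the horseshoe marginal posterior and asymptotic analysis of integrals of the form $\int_0^1 u^{k/2}(1-u)^{-1/2}(1-(1-\t^2)u)^{-1}e^{-y^2 u/2}\,du$, essentially the estimates developed in \cite{vdPas}. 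I will work with three regimes: (a) \emph{subthreshold} $|y|\ll \z_\t$, in which $|\hat\th_i(\t)(y)|\le c\t(1+|y|)$ and $\hat r_i(\a,\t)(y)\asymp z_\a\t$; (b) \emph{intermediate} $f_\t\t\le|y|\le k_M\z_\t$ with $k_M<1$, in which $|\hat\th_i(\t)(y)|=o(|y|)$ and $\hat r_i(\a,\t)(y)=o(|y|)$; (c) \emph{suprathreshold} $|y|\ge k_L\z_\t$ with $k_L>1$, in which $\hat\th_i(\t)(y)=y+o(1)$ and $\hat r_i(\a,\t)(y)=z_\a(1+o(1))$.

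For $i\in\cS$, on the good event $G_S=\{|\e_i|\le \z_{\g_n/2}\}$ we have $|Y_i|\le k_S\t+\z_{\g_n/2}\ll\z_\t$, so (a) applies and $|\hat\th_i(\t)-\th_{0,i}|\le k_S\t+c\t(1+\z_{\g_n/2})$; a direct comparison against $\hat r_i(\a,\t)$ then yields $|\hat\th_i(\t)-\th_{0,i}|\le L_S\hat r_i(\a,\t)$ for $L_S$ as stated, the constants $2.1$ and $2/\g_n$ being dictated by the constants appearing in (a). The complement $G_S^c$ has probability $2(1-\Phi(\z_{\g_n/2}))=o(\g_n)$, so each coordinate fails coverage with probability $o(\g_n)$, and applying Markov's inequality to $\sum_{i\in\cS}\1\{\th_{0,i}\notin \hat C_{ni}(L_S,\t)\}$ gives $P_{\th_0}(\text{fraction uncovered}>\g_n)\le o(\g_n)/\g_n\to 0$, which is \eqref{eq: margcovS}. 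The argument for $i\in\cL$ is analogous: on $G_L=\{|\e_i|\le\z_{\g_n/2}\}$ we have $|Y_i|\ge k_L\z_\t-\z_{\g_n/2}\ge k_L'\z_\t$ for some $k_L'>1$, so (c) yields $|\hat\th_i(\t)-\th_{0,i}|\le|\e_i|+o(1)\le \z_{\g_n/2}(1+o(1))\le L_L\hat r_i(\a,\t)$, and a Markov step gives \eqref{eq: margcovL}.

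For $i\in\cM$ the claim is pointwise in $i$ rather than an average, and the argument is purely analytic. On the event $\{|\e_i|\le \tfrac14 \z_\t\}$, which has probability tending to one since $\z_\t\to\infty$, we have $|Y_i|\le k_M'\z_\t$ for some $k_M<k_M'<1$; then (b) (or (a) if $|Y_i|$ happens to be small) gives $|\hat\th_i(\t)|=o(|\th_{0,i}|)$ and $\hat r_i(\a,\t)=o(|\th_{0,i}|)$ uniformly over the allowed range, so $|\th_{0,i}-\hat\th_i(\t)|\ge\tfrac12|\th_{0,i}|\ge \tfrac12 f_\t\t$ while $L\hat r_i(\a,\t)=o(|\th_{0,i}|)$ for any fixed $L>0$. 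This yields \eqref{eq: margcovM}.

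The hard part will be the preliminary estimates (a)--(c), and in particular (b): showing simultaneously that the horseshoe shrinkage is strong and its spread narrow across the full intermediate range, uniformly in $|y|\in[f_\t\t,k_M\z_\t]$, demands careful asymptotic analysis of the integrals defining the posterior moments and quantiles. These asymptotics are essentially the technical core of \cite{vdPas}; once imported, the coverage arguments above are relatively routine, the only subtlety being the matching of the rates in (a)--(c) to the explicit constants $L_S,L_L$ in the theorem statement.
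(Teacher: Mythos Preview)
Your overall architecture matches the paper's: split into the three regimes, bound $|\hat\th_i(\t)-\th_{0,i}|$ from above (cases $\cS,\cL$) or below (case $\cM$), compare to $\hat r_i(\a,\t)$, and then pass from per-coordinate statements to fractions. However, two points deserve attention.

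First, the crux of the argument is the control of the credible radius $\hat r_i(\a,\t)$, and this is \emph{not} something you can simply import from \cite{vdPas}. That paper gives bounds on the posterior mean and variance, not on credible quantiles of the full (non-Gaussian) marginal posterior. In regime (a) you need a \emph{lower} bound $\hat r_i(\a,\t)\ge c\,z_\a\t$; the paper obtains this by writing the posterior as a mixture over $\l_i$ of normals, invoking Anderson's lemma to replace $\hat\th_i(\t)$ by $\hat\th_i(\t,\l_i)$, and then comparing the posterior $\pi(\l_i\given Y_i,\t)$ to a dominated density $\tilde\pi(\l_i\given\t)\propto(1+\l_i^2\t^2)^{-1/2}(1+\l_i^2)^{-1}$ via a likelihood-ratio monotonicity argument. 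In regime (c) the lower bound $\hat r_i(\a,\t)\ge z_\a+o(1)$ requires showing $\Pi(\l_i\ge g_\t/\t\given Y_i,\t)\to 1$ for suitable $g_\t\to\infty$ when $|Y_i|>A\z_\t$, which is a separate computation on the posterior of $\l_i$. In regime (b) the \emph{upper} bound $\hat r_i(\a,\t)\lesssim \t(1\vee|Y_i|e^{Y_i^2/2})$ needs the complementary fact $\sup_{|y|\le k\z_\t}\Pi(\l_i\ge M\given Y_i=y,\t)\to0$ as $M\to\infty$. None of these three facts is in \cite{vdPas}; they are the technical heart of the present theorem. Your proposal should at least sketch how you intend to obtain them.

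Second, two smaller corrections. Your stated bound in (a), $|\hat\th_i(\t)(y)|\le c\t(1+|y|)$, is too optimistic; the correct bound (Lemma~\ref{LemmaBoundsPostMeanVariance}(iii)) is $|\hat\th_i(\t)(y)|\le \t|y|e^{y^2/2}$, and it is precisely the factor $e^{\z_{\g_n/2}^2/2}=2/\g_n$ that produces the $2/\g_n$ in $L_S$. With your bound the constant would be of order $\z_{\g_n/2}/z_\a$, which does not match the statement. Also, your Markov step from per-coordinate failure probability to the fraction statement is not quite right when $\g_n\to c>0$: then $P(G_S^c)=2(1-\Phi(\z_{\g_n/2}))$ is strictly less than $c$ but not $o(\g_n)$, so Markov only gives a bound $<1$, not $o(1)$. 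The paper instead argues via the empirical quantiles of the i.i.d.\ errors $|\varepsilon_i|$ (a law-of-large-numbers statement): with probability tending to one, a fraction at least $1-\g_n$ of the $|\varepsilon_i|$ lie below $\z_{\g_n/2}+\d$.
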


\begin{proof}
See Section~\ref{sec:marginal}.
\end{proof}

\subsection{\label{sec:adaptive_intervals}Adaptive credible intervals}
We show that the adaptive credible intervals mimic the behaviour of the intervals for deterministic
$\t$ given in Theorem~\ref{thm:marginal_adapt}. The adaptive results require some conditions on either the empirical Bayes estimator of $\tau$, or the hyperprior on $\tau$. In the empirical Bayes case, one condition on the estimator of $\tau$ suffices, stated below. It is the same condition under which  adaptive contraction of the empirical Bayes horseshoe posterior was proven in \cite{contractionpaper}.

\begin{cond}\label{cond.eb}
There exists a constant $C > 0$ such that $\widehat\t_n\in[1/n, C\t_n(p_n)]$, with $P_{\th_0}$-probability tending to one,
uniformly in $\th_0\in\ell_0[p_n]$. 
\end{cond}

As proven in \cite{contractionpaper}, Condition \ref{cond.eb} is met by the marginal maximum likelihood estimator (MMLE). The MMLE is the maximum likelihood estimator of $\tau$ in the model where we assume that the data are distributed according to the convolution of the standard normal density and the horseshoe density on $\theta$. It is given by

\begin{align}\label{def: MMLE_HS}
\mmle =  \argmax_{\t\in \left[{1}/{n},1\right]} \prod_{i=1}^n\int_{-\infty}^{\infty} \phi(y_i-\th)g_{\t}(\th)\,d\th,
\end{align}
where $g_{\t}(\th)=\int_0^{\infty}\phi\left(\frac{\th}{\l\t}\right)\frac{1}{\l\t}\frac{2}{\pi(1+\l^2)}\, d\l$.

The restriction of the MMLE to the interval $[1/n, 1]$ corresponds to an assumption that the number of signals is between 1 and $n$, following the interpretation of $\tau$ as (approximately) the proportion of signals. In \cite{contractionpaper}, and in the simulation study in Section \ref{sec:simulation}, the MMLE is compared to the ``simple" estimator' of \cite{vdPas}, which estimates $p_n$ by counting the number of observations that are larger than  (a constant multiple of) the universal threshold $\sqrt{2\log{n}}$. and its computation is discussed. It is proven that the MMLE meets Condition \ref{cond.eb}, and thus that   the empirical Bayes procedure with the MMLE as a plug-in estimate of $\tau$ leads to adaptive posterior concentration results.

In the hierarchical Bayes procedure, we  impose the same conditions on the hyperprior $\pi_n$ as for adaptive posterior concentration in  \cite{contractionpaper}. We recall them below.

\begin{cond}\label{cond.hyper.3}
The prior density $\pi_n$ is supported inside $[1/n, 1]$. 
\end{cond}

\begin{cond}\label{cond.hyper.1}
Let $t_n= C_u\pi^{3/2}\, \t_n(p_n)$, with the constant $C_u$ as in 
Lemma~\ref{lem: E_nonzero_m(Y)}(i). The prior density $\pi_n$ satisfies
\begin{equation*}
\int_{t_n/2}^{t_n}\pi_n(\t)\, d\t \gtrsim e^{-cp_n},\quad \text{for some $c<C_u/2$},
\end{equation*}
\end{cond}
where $t_n= C_u\pi^{3/2}\, \t_n(p_n)$. 

Condition \ref{cond.hyper.1} may be replaced by the weaker Condition \ref{cond.hyper.1a}, at the price of suboptimal rates.

\begin{cond}\label{cond.hyper.1a}
For $t_n$ as in Condition~\ref{cond.hyper.1} the prior density $\pi_n$ satisfies,
\begin{equation*}
\int_{t_n/2}^{t_n}\pi_n(\t)\, d\t \gtrsim t_n.
\end{equation*}
\end{cond}

Examples of priors meeting Conditions \ref{cond.hyper.3} and \ref{cond.hyper.1a} are the Cauchy prior on the positive reals, or the uniform prior, both truncated to $[1/n, 1]$. They satisfy the stronger Condition \ref{cond.hyper.1} if $p_n \geq C\log{n}$, for a sufficiently large $C > 0.$

In the adaptive case, the three regions (small, medium and large) of signal parameters are defined as, for  given positive constants  $k_S$, $k_M$, $k_L$, and $f_n$:
\begin{align*}
\cS_{a}&:=\bigl\{1\le i\le n: |\th_{0,i}|\leq k_S/n\bigr\},\\
\cM_{a}&:=\bigl\{1\le i\le n: f_n\t_n(p_n)\leq |\th_{0,i}|\leq k_M \sqrt{2\log(1/\t_n(p_n))}\bigr\},\\
\cL_{a}&:=\bigl\{1\le i\le n: k_L \sqrt{2\log{n}}\leq |\th_{0,i}|\bigr\}.
\end{align*}

\begin{theorem}\label{thm:marginal_adapt}
Suppose that $k_S>0$, $k_M<1$, $k_L>1$, and $f_n\uparrow\infty$.
If $\hat\t_n$ satisfies  Condition~\ref{cond.eb}, then 
for any sequence $\g_n\rightarrow c$ for some $0\le c\le 1/2$ such that $\z_{\g_n}^2\ll \log (1/\t_n(p_n))$, we have that 
\begin{align}
P_{\th_0} \Bigl( \frac1{|\cS_a|}|\{i\in \cS_a: \th_{0,i}\in \hat{C}_{ni}(L_S, \hat\t_n)\}|\geq 1-\g_n \Bigr)&\rightarrow 1,\label{eq: margcovS_a}\\
P_{\th_0} \bigl( \th_{0,i}\notin \hat{C}_{ni}(L, \hat\t_n) )\rightarrow 1,\quad\text{for any }L>0 &\text{ and $i\in \cM_a$},\label{eq: margcovM_a}\\
P_{\th_0} \Bigl( \frac1{|\cL_a|}|\{i\in \cL_a: \th_{0,i}\in \hat{C}_{ni}(L_L,\hat\t_n)\}|/l\geq 1-\g_n \Bigr)&\rightarrow 1,\label{eq: margcovL_a}
\end{align}
with $L_S$ and $L_L$ given in Theorem~\ref{thm:marginal}.
Under Conditions~\ref{cond.hyper.3} and \ref{cond.hyper.1} and in addition $p_n\gtrsim \log n$ the same statements hold
for the hierarchical Bayes marginal credible sets. This is also true 
under Conditions~\ref{cond.hyper.3} and \ref{cond.hyper.1a} if $f_n\gg \log n$, with different constants  $L_S$ and $L_L$.
\end{theorem}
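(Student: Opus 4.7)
My plan is to reduce both parts of Theorem~\ref{thm:marginal_adapt} to a uniform-in-$\t$ version of Theorem~\ref{thm:marginal} over a deterministic range of~$\t$. First I would sharpen the proof of Theorem~\ref{thm:marginal} so that its three conclusions hold simultaneously for every $\t\in[1/n, C\t_n(p_n)]$; inspection of the bounds used there on the marginal posterior mean $\hat\th_i(\t)$ and the radius $\hat r_i(\a,\t)$ shows that they are explicit expressions in~$\t$, so uniformity should follow by tracking the $\t$-dependence carefully, with only minor adjustments to constants. For the empirical Bayes conclusions \eqref{eq: margcovS_a}--\eqref{eq: margcovL_a}, Condition~\ref{cond.eb} then reduces the problem to this uniform statement on the event $\{\hat\t_n\in[1/n, C\t_n(p_n)]\}$, which has $P_{\th_0}$-probability tending to one, uniformly over $\ell_0[p_n]$.

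Second, I would verify that the adaptive regions $\cS_a,\cM_a,\cL_a$ sit inside their deterministic counterparts $\cS,\cM,\cL$ uniformly over this range of~$\t$. For $i\in\cS_a$ we have $|\th_{0,i}|\le k_S/n\le k_S\t$ for all $\t\ge 1/n$; for $i\in\cL_a$, using $\z_\t\le\sqrt{2\log n}$ whenever $\t\ge 1/n$, we get $|\th_{0,i}|\ge k_L\sqrt{2\log n}\ge k_L\z_\t$; for $i\in\cM_a$, choosing $f_\t$ in Theorem~\ref{thm:marginal} to be a small multiple of $f_n$ (legitimate since $\t\le C\t_n(p_n)$) and observing $\z_\t\gtrsim \sqrt{2\log(1/\t_n(p_n))}$ for $\t\le C\t_n(p_n)$ (with a small constant absorbed into $k_M$) yields the medium-region containment. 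Combining the three containments with the uniform version of Theorem~\ref{thm:marginal} delivers \eqref{eq: margcovS_a}--\eqref{eq: margcovL_a}.

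For the hierarchical Bayes statement, the plan is first to show that the posterior $\Pi(\t\in\cdot\given Y^n)$ concentrates inside $[1/n, C\t_n(p_n)]$ under Conditions~\ref{cond.hyper.3}--\ref{cond.hyper.1} (and $p_n\gtrsim\log n$), by the prior-mass/evidence lower bound argument of \cite{contractionpaper}. Since the full Bayes marginal posterior of $\th_i$ is a mixture over $\t$ of the fixed-$\t$ marginal posteriors, the same reduction to the uniform version of Theorem~\ref{thm:marginal} yields the analogous credible-interval coverage by conditioning and averaging. Under the weaker Condition~\ref{cond.hyper.1a} the posterior for $\t$ concentrates only on a wider interval of the form $[1/n, C\log n\cdot\t_n(p_n)]$, which forces $f_n\gg\log n$ in order to preserve the lower boundary of $\cM_a$, and which enlarges the constants $L_S$ and $L_L$ by a logarithmic factor.

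The hard part will be establishing the uniform-in-$\t$ version of Theorem~\ref{thm:marginal}: both the posterior mean and credible radius are non-trivial functionals of $\t$ (involving integrals against the horseshoe prior), so their $\t$-dependence must be tracked in each of the three regimes separately, with the strongest shrinkage occurring at the lower end $\t=1/n$ and the weakest at the upper end $\t=C\t_n(p_n)$. For the hierarchical case under Condition~\ref{cond.hyper.1a}, the delicate step is pinning down the precise blow-up factor in $f_n$ and in $L_S,L_L$ required to match the looser $\t$-posterior concentration.
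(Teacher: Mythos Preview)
Your treatment of the empirical Bayes part is essentially what the paper does: it observes that the bounds on $\hat\th_i(\t)$ and $\hat r_i(\a,\t)$ from the proof of Theorem~\ref{thm:marginal} carry over when $\hat\t_n$ is plugged in, using $\hat\t_n\ge 1/n$, $\z_{\hat\t_n}\le\sqrt{2\log n}$, and $\hat\t_n\le C\t_n(p_n)$ from Condition~\ref{cond.eb}. Your verification that $\cS_a,\cM_a,\cL_a$ sit inside the fixed-$\t$ regions uniformly over $[1/n,C\t_n(p_n)]$ is the right bookkeeping.

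The gap is in the hierarchical Bayes part. Your sentence ``the full Bayes marginal posterior of $\th_i$ is a mixture over $\t$ of the fixed-$\t$ marginal posteriors, so the same reduction \ldots\ by conditioning and averaging'' does not work, because the full Bayes credible interval is centered at $\hat\th_i=\int\hat\th_i(\t)\,\pi(\t\given Y^n)\,d\t$ and has radius $\hat r_i(\a)$ defined from the \emph{full} posterior of $|\th_i-\hat\th_i|$. Neither of these is a simple mixture of the fixed-$\t$ centers $\hat\th_i(\t)$ or radii $\hat r_i(\a,\t)$, so knowing $\th_{0,i}\in\hat C_{ni}(L,\t)$ for every $\t$ in a good range does not yield $\th_{0,i}\in\hat C_{ni}(L)$. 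The paper handles this with arguments that have no analogue in the fixed-$\t$ proof: in the $\cS_a$ case it proves a lower bound of the new form $\hat r_i(\a)\ge c\,|\hat\th_i|$ (relating the full-Bayes radius directly to the full-Bayes mean), together with $\hat r_i(\a)\ge z_\a/(2n)$, and combines these with $|\th_{0,i}-\hat\th_i|\le|\hat\th_i|+k_S/n$. In the $\cM_a$ case it needs the \emph{exponential} bound $\Pi(\t\ge 5t_n\given Y^n)\lesssim e^{-c_0 p_n}$ (not merely $o(1)$) in order to control the tail contribution to $|\hat\th_i|=\int|\hat\th_i(\t)|\,\pi(\t\given Y^n)\,d\t$, since on $\{\t>5t_n\}$ one can only use the crude bound $|\hat\th_i(\t)|\le|Y_i|$; this is exactly why the extra hypothesis $p_n\gtrsim\log n$ enters. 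Your plan of ``posterior concentration plus averaging'' does not supply either of these ingredients.
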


\begin{proof}
See Section \ref{sec:marginal_adapt}.
\end{proof}

\begin{remark}
Under the self-similarity assumption \eqref{def: selfsim} discussed in Section \ref{sec:coverage.bayes}, the statements of Theorem~\ref{thm:marginal_adapt} 
hold for the sets $\cS$, $\cM$ and $\cL$ given preceding Theorem~\ref{thm:marginal} with $\t=\t_n(p_n)$.
\end{remark}

\begin{figure}[h!]
\begin{center}
\includegraphics[width = \textwidth]{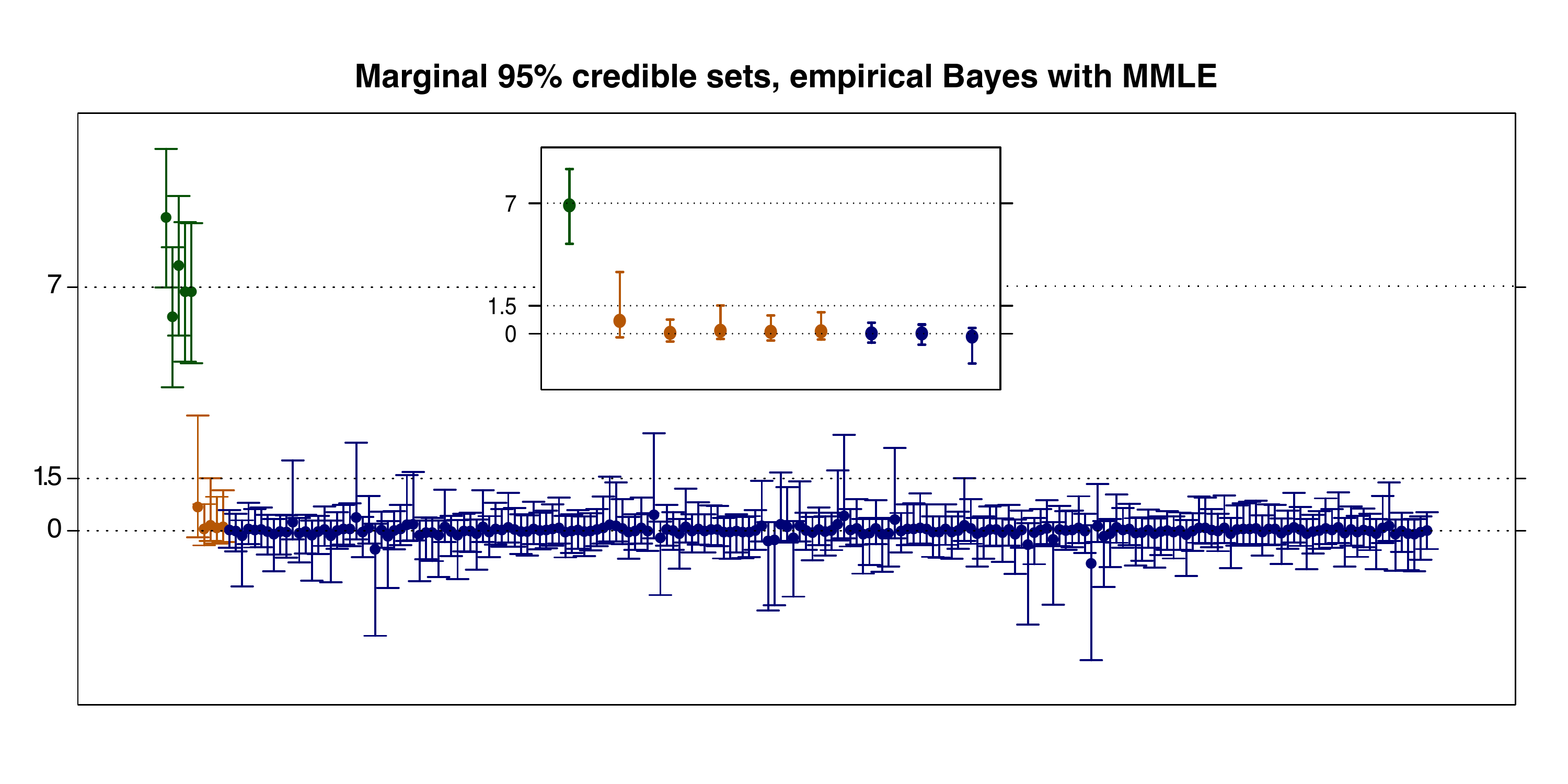}
\caption{95\% marginal credible intervals based on the MMLE empirical Bayes method,
for a single observation $Y^n$ of length $n = 200$ with $p_n = 10$ nonzero parameters,
the first 5  (from the left) being 7 (green), the next 5 equal to 1.5 (orange);  the remaining 190  parameters are coded (blue). 
The inserted plot zooms in on credible intervals 5 to 13, thus showing one large mean and all intermediate means.}
\label{fig:marginal}	
\end{center}
\end{figure}

Figure~\ref{fig:marginal} illustrates Theorem~\ref{thm:marginal_adapt}
by showing the marginal credible sets for just a single draw of the observation,
in a setting with  $n = 200$, and $p_n = 10$ nonzero coordinates.
The value $\t$ was chosen equal to the MMLE, which realised as approximately 0.11.
The means were taken equal to 7, 1.5 or 0, corresponding to the three regions $\cL, \cM,\cS$ listed in the theorem
($\sqrt{2\log{n}} \approx 3.3$).  All the large means (equal to 7)
were covered; only 2 out of 5 of the medium means (equal to 1.5) were covered; and all small (zero) means
were covered, in agreement with Theorem \ref{thm:marginal_adapt}.
It may be noted that intervals for zero coordinates are not necessarily narrow.

\section{\label{sec:fdr}Model selection}
The marginal credible sets give rise to a natural model selection procedure: a parameter is selected
as a signal if and only if the corresponding credible interval does not contain zero. We study this
procedure again both in the case that a value of $\tau$ is available and in the adaptive case where
$\tau$ is estimated from the data or receives a hyperprior.

In light of the results of Theorems~\ref{thm:marginal} and~\ref{thm:marginal_adapt}, and the fact
that the number of nonzero parameters is a vanishing fraction of the total set of coordinates by
assumption, we consider three quantities to describe the accuracy of a model selection
procedure. The first is the fraction of parameters exactly equal to zero that is falsely considered
a signal. The second is the fraction of small and medium signals in $\cS$ and $\cM$, or $\cS_a$ and
$\cM_a$, that is correctly considered a signal. The third is the fraction of large signals in $\cL$
or $\cL_a$ that is correctly selected as signals. A quick summary of the results is that, with probability tending
to one, only vanishing fractions of zeroes and large signals are incorrectly selected or not
selected, while the fraction of small and medium signals that are not discovered tends to one.

We state the result for the adaptive case, with empirical Bayes with the MMLE or hierarchical Bayes. 
A similar assertion for non-adaptive case is stated in Appendix \ref{sec:proofs_model_selection} as Theorem \ref{thm: modelselect:credible}.

\begin{theorem}\label{thm: modelselect:adapt} Suppose that $k_S>0$, $k_M<1$, $k_L>1$, and $f_n\uparrow\infty$.
For any sequence $\g_n\rightarrow c$ for some $0\le c\le 1/2$ such that $\z_{\g_n}^2\ll \log (1/\t_n(p_n))$, the following statements hold.

\begin{enumerate}
\item[(i)] The false discovery rate of the MMLE empirical Bayes and hierarchical Bayes credible intervals based model selection procedure is bounded from above by $\gamma_n$.

\item[(ii)] With probability tending to one, at least a $1-\gamma_n$ fraction of the signals  belonging to the set $\cL_a$ will be covered, i.e. 
\begin{align*}
P_{\th_0} \Bigl( \frac1{|\cL_a|}{|\{i\in \cL_a: 0\notin \hat{C}_{ni}(C,\hat\t)\}|}\geq 1-\g_n \Bigr)&\rightarrow 1,
\end{align*}
for any $C>0$ both for the hierarchical and empirical Bayes method.

\item[(iii)] At most a $\gamma_n$ fraction of the nonzero parameters $\theta_{0,i}\in \cS_a\cup \cM_a$ 
will be selected by the credible set method (with any blow up factor $C\geq 1$), with probability tending to one.
\end{enumerate}

\end{theorem}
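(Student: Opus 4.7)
The three parts of Theorem \ref{thm: modelselect:adapt} are essentially corollaries of Theorem \ref{thm:marginal_adapt}. My plan is to convert each of the marginal coverage statements there, which concerns the inclusion of $\theta_{0,i}$ in $\hat C_{ni}(L,\hat\t_n)$, into the corresponding statement about the inclusion of $0$, using the location of the posterior mean $\hat\theta_i$ and the size of the half-width $\hat r_i(\alpha,\hat\t_n)$ in each of the regions $\cS_a$, $\cM_a$, $\cL_a$. The first ingredient is elementary: every coordinate with $\theta_{0,i}=0$ lies in $\cS_a$, and because $|\theta_{0,i}|\le k_S/n$ on $\cS_a$, the events $\{\theta_{0,i}\in\hat C_{ni}(C,\hat\t_n)\}$ and $\{0\in\hat C_{ni}(C,\hat\t_n)\}$ differ only by a vanishing shift. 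Applying \eqref{eq: margcovS_a} therefore bounds, by $\gamma_n$ with probability tending to one, both the fraction of truly-zero coordinates that are declared a signal and the fraction of the nonzero $\cS_a$-coordinates that are selected, yielding part (i) and the $\cS_a$-half of (iii) simultaneously.

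For the $\cM_a$-half of (iii), I would leverage \eqref{eq: margcovM_a}: for every $i\in\cM_a$ the credible interval misses $\theta_{0,i}$ with probability tending to one. The mechanism for this miss, which is what is actually needed here, is over-shrinkage; the proof of Theorem \ref{thm:marginal_adapt} in fact shows that on $\cM_a$ the posterior mean $\hat\theta_i$ is pulled much closer to $0$ than to $\theta_{0,i}$ and the half-width $\hat r_i(\alpha,\hat\t_n)$ is of strictly smaller order than $|\theta_{0,i}|$. Extracting the quantitative bound $|\hat\theta_i|+C\hat r_i(\alpha,\hat\t_n)<|\theta_{0,i}|$ for any fixed $C\ge 1$ then forces $0\in\hat C_{ni}(C,\hat\t_n)$, so such a coordinate is not selected. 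For part (ii) I would apply \eqref{eq: margcovL_a} with $L=L_L$ to place $\theta_{0,i}$ within $L_L\hat r_i(\alpha,\hat\t_n)$ of $\hat\theta_i$ on at least a $(1-\gamma_n)$-fraction of $i\in\cL_a$. Since the horseshoe shrinkage on large observations is negligible, $\hat r_i(\alpha,\hat\t_n)$ is uniformly $O(1)$ on that event, while $|\theta_{0,i}|\ge k_L\sqrt{2\log n}$ with $k_L>1$; thus the interval of radius $C\hat r_i(\alpha,\hat\t_n)$ about $\hat\theta_i$ is centred far from $0$ compared with its radius, giving $0\notin\hat C_{ni}(C,\hat\t_n)$ for any fixed $C>0$.

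The main obstacle is not the combinatorics, which is trivial once Theorem \ref{thm:marginal_adapt} is in hand, but the need to extract from that proof, for each of the three regions, \emph{quantitative} control on $\hat\theta_i$ and on the quantile $\hat r_i(\alpha,\hat\t_n)$ as functions of $\hat\t_n$, rather than merely the qualitative inclusion statements. Condition \ref{cond.eb} supplies the deterministic envelope $\hat\t_n\in[1/n,\,C\t_n(p_n)]$ under which the horseshoe shrinkage coefficient is controlled uniformly in $i$ and in the data, which is exactly the input needed for those quantitative bounds. The hierarchical Bayes conclusion then follows by the standard reduction used in the proof of Theorem \ref{thm:marginal_adapt}: Conditions \ref{cond.hyper.3} and \ref{cond.hyper.1} (or \ref{cond.hyper.1a} under the stronger assumption $f_n\gg\log n$) concentrate the posterior of $\t$ on a band around $\t_n(p_n)$ of the type required by Condition \ref{cond.eb}, and the preceding arguments may be integrated against that posterior without loss.
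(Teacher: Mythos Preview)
Your plan for part~(ii) is sound: once $\th_{0,i}\in\hat C_{ni}(L_L,\hat\t_n)$ on a $(1-\g_n)$-fraction of $\cL_a$, the only missing ingredient is an \emph{upper} bound $\hat r_i(\a,\hat\t_n)=O(1)$, which follows from Chebyshev's inequality and Lemma~\ref{LemmaBoundsPostMeanVariance}(iv)--(v); then $|\hat\th_i|\ge k_L\sqrt{2\log n}-O(1)\gg C\hat r_i$ and $0\notin \hat C_{ni}(C,\hat\t_n)$ for any fixed $C$.

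Parts~(i) and~(iii), however, do \emph{not} follow from Theorem~\ref{thm:marginal_adapt} in the way you describe, and this is precisely the content of the Remark after Theorem~\ref{thm: modelselect:adapt}. Statement \eqref{eq: margcovS_a} only delivers $\th_{0,i}\in\hat C_{ni}(L_S,\hat\t_n)$ for the \emph{specific large} constant $L_S$; combined with $|\th_{0,i}|\le k_S/n$ and $\hat r_i\gtrsim 1/n$ this gives $0\in\hat C_{ni}(C',\hat\t_n)$ only for $C'\gtrsim L_S$. But (i) and (iii) assert non-selection for \emph{every} $C\ge1$, and since $\hat C_{ni}(C,\hat\t_n)$ shrinks as $C\downarrow 1$, the case $C=1$ is the hardest and is not covered by your reduction. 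For $\cM_a$ the situation is worse: the implication you state, ``$|\hat\th_i|+C\hat r_i<|\th_{0,i}|$ forces $0\in\hat C_{ni}(C,\hat\t_n)$'', is simply false (take $\hat\th_i=2$, $\hat r_i=1$, $C=1$, $|\th_{0,i}|=10$). What is needed is $|\hat\th_i|\le C\hat r_i$, and the bounds $|\hat\th_i|\lesssim U_\t$, $\hat r_i\lesssim U_\t$ extracted from the proof of case $\cM$ give no comparison between the two.

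The paper therefore takes a different route for (i) and (iii): it shows directly that, for $|Y_i|\le k\z_\t$ (some $k<1$), the posterior probability $\Pi(\th_i<0\mid Y_i,\t)$ (when $Y_i>0$) exceeds $\a$, by conditioning on $\l_i\le M$ and using that $\th_i\mid(Y_i,\t,\l_i)$ is normal with mean $\hat\th_i(\t,\l_i)=r_i^2(\t,\l_i)Y_i\to0$. The inequality $\Pi(\th_i<0\mid Y_i,\t)>\a$ immediately forces $0$ to lie in the interval with blow-up $C=1$, since otherwise $\{\th_i<0\}\subset\{|\th_i-\hat\th_i|>\hat r_i\}$, an event of posterior mass $\a$. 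This argument is what makes the result valid for \emph{arbitrary} $C\ge1$; your reduction to Theorem~\ref{thm:marginal_adapt} cannot achieve that.
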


\begin{proof} See Appendix \ref{sec:proofs_model_selection}. \end{proof}

\begin{remark}
The model selection theorem above is valid for arbitrary blow up constant $C\geq 1$, in contrast
  to the marginal coverage Theorem \ref{thm:marginal_adapt}, where the blow up factors must be
  chosen large enough. This is because selection is an easier problem. Marginal coverage
  requires that the size of the credible set compares appropriately to the distance between the true parameter and the
  marginal posterior mean. This comparison could be done only up to constant multipliers. Model selection
  depends only on whether zero is inside the marginal credible set, and this 
  requires only a lower bound for the marginal posterior probability that the signal is negative (or
  positive) if the marginal posterior mean is positive (or negative).
\end{remark}

An alternative method for model selection using the horseshoe was proposed by \cite{Carvalho2010}.  
They proposed to select as nonzero coordinates the indices such that the ratio $\kappa_i(\tau)=\hat\theta_i(\tau)/Y_i$ 
exceeds a threshold (to be precise $\kappa_i(\tau)>1/2$). This method has similar behaviour to the
credible set based model selection approach, as proven in Theorem \ref{thm: modelselect:threshold}
in Appendix \ref{sec:proofs_model_selection}. We refer to \cite{Datta2013} for theoretical properties of
this procedure, and compare the credible interval and thresholding methods further through
simulation in Section \ref{sec:simulation}.

\section{Credible balls}
\label{sec:coverage}
By their definition, credible sets contain a fixed fraction, e.g.\ 95 \%, of the posterior mass.
The diameter of  such sets will be at most of the order of the posterior contraction rate. 
The upper bounds on the contraction rates of the horseshoe posterior distributions
given in \cite{contractionpaper} imply that the horseshoe credible sets are narrow enough to be informative. However, these bounds
do not guarantee that the credible sets will \emph{cover} the truth. The latter is dependent on the \emph{spread}
of the posterior mass relative to its distance to the true parameter. For instance, the bulk of the 
posterior mass may be highly concentrated inside a ball of radius the contraction rate, but within a narrow area 
of diameter much smaller than its distance to the true parameter.

In this section we study coverage of credible balls, that is, credible sets for the full parameter vector $\th_0\in\RR^n$ relative to the Euclidean distance. 
We do so first in the case of deterministic $\t$ and next for the empirical and full Bayes posterior
distributions.


\subsection{Definitions}

Given a deterministic hyperparameter $\t$, possibly depending on $n$ and $p_n$,
we consider  a \emph{credible ball} of the form
\begin{align}
\hat{C}_n(L,\t)=\bigl\{\th: \|\th-\hat\th(\t)\|_2\leq L \hat r(\a,\t)\bigr\},\label{def:credible}
\end{align}
where $\hat\th(\t)=\E(\th\given Y^n ,\t)$ is the posterior mean, $L$ a positive constant, and 
for a given $\a\in (0,1)$ the number $\hat r(\a,\t)$ is determined such that
$$\Pi\big(\th: \|\th-\hat\th(\t)\|_2\le \hat r(\a,\t) \given Y^n,\t\big)=1-\a.$$
Thus $\hat r(\a,\t)$ is the natural radius of a set of ``Bayesian credible level'' $1-\a$, and
$L$ is a constant, introduced to make up for a difference between credible and confidence
levels, similarly as in \cite{SzVZ2}. Unlike in the latter paper the  radii $\hat r(\a,\t)$ do depend on the observation $Y^n$,
as indicated by the hat in the notation.

In the empirical Bayes approach we define a credible  set by plugging in an estimator
$\widehat\t_n$ of $\t$ into the non-adaptive credible ball  $\hat{C}_n(L,\t)$ given in \eqref{def:credible}:
\begin{align}
\hat{C}_n(L,\widehat\t_n)=\bigl\{\th: \|\th-\hat\th({\widehat\t_n})\|_2\leq L \hat r(\a,\widehat\t_n)\bigr\}.\label{def:ebcredible}
\end{align}
In the hierarchical Bayes case we use a ball around the full posterior mean $\hat\th=\int\th\,\Pi(d\th\given Y^n)$, given by 
\begin{align}
\hat{C}_n(L)=\bigl\{\th: \|\th-\hat\th\|_2\leq L \hat r(\a)\bigr\},\label{def:hbcredible}
\end{align}
where $L$ is a positive constant and $\hat r(\a)$ is defined from the full posterior distribution by 
$$\Pi\bigl(\th: \|\th-\hat\th\|_2\leq \hat r(\a) \given Y^n\bigr)=1-\a.$$
The question is whether these Bayesian credible sets are appropriate for uncertainty
quantification from a frequentist point of view.

\subsection{Credible balls for deterministic $\t$}
\label{sec:deterministic}
The following lower bound for $\hat r(\a,\t)$ in the case that $n\t\ra\infty$ is the key to the frequentist coverage.
The assumption $n\t/\z_\t \ra \infty$ is satisfied for $\t$ of the order the ``optimal'' rate $\t_n(p_n)$
provided $p_n\ra\infty$ (as we assume). 

\begin{lemma}\label{lem: LBradius}
If  $n\t/\z_\t\rightarrow\infty$, then with $P_{\th_0}$-probability tending to one,
$$\hat r(\a,\t)\ge 0.5\sqrt{n\t\z_\t}.$$
\end{lemma}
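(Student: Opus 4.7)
The plan is to turn a lower bound on the credible radius into a question about posterior moments, via a standard Paley–Zygmund argument, and then use the single-coordinate structure of the horseshoe posterior given $\tau$ to control these moments.

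Because the coordinates $\theta_1,\dots,\theta_n$ are independent under $\Pi(\cdot\mid Y^n,\t)$, the random variable
$Z:=\|\th-\hat\th(\t)\|_2^2$ satisfies
$\mu_n:=\E_\Pi(Z\mid Y^n,\t)=\sum_{i=1}^n V_i$, where $V_i=\Var(\th_i\mid Y_i,\t)$, and
$\E_\Pi(Z^2\mid Y^n,\t)=\mu_n^2+\sum_i(M_i-V_i^2)$, with $M_i=\E_\Pi((\th_i-\hat\th_i)^4\mid Y_i,\t)$. The defining identity $\Pi(Z>\hat r(\a,\t)^2\mid Y^n,\t)=\a$ combined with the Paley–Zygmund inequality applied to $Z$ gives, for any $\lambda\in(0,1)$,
\[
\a=\Pi\bigl(Z>\hat r(\a,\t)^2\bigm|Y^n,\t\bigr)\ge (1-\lambda)^2\frac{\mu_n^2}{\E_\Pi(Z^2\mid Y^n,\t)}
\quad\text{whenever }\hat r(\a,\t)^2\le\lambda\mu_n.
\]
So it suffices to show that on an event of $P_{\th_0}$-probability tending to one one has both
$\mu_n\ge c\,n\t\z_\t$ and $\E_\Pi(Z^2\mid Y^n,\t)\le C\mu_n^2$, for suitable constants; choosing $\lambda=1/4$ and using $n\t/\z_\t\to\infty$ then gives a contradiction unless $\hat r(\a,\t)\ge 0.5\sqrt{n\t\z_\t}$.

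For the lower bound on $\mu_n$, I would revisit the single-coordinate horseshoe posterior and use the representation $\hat\th_i(\t)=Y_i(1-E(\k_i\mid Y_i,\t))$, $V_i=(1-E(\k_i\mid Y_i,\t))+Y_i^2\Var(\k_i\mid Y_i,\t)$, with $\k_i=1/(1+\t^2\l_i^2)$. The crucial point is that for a standard normal $Y$ (which is the distribution of $Y_i$ for the at least $n-p_n\sim n$ zero coordinates), the expected posterior second moment of $\th$ is of order $\t\z_\t$; this is exactly the bound established in the single-coordinate computations of \cite{vdPas} (and reused in \cite{contractionpaper}) and yields $\E_{\th_{0,i}=0}V_i\gtrsim \t\z_\t$. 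A trivial bound $V_i\ge0$ on the at most $p_n=o(n)$ nonzero coordinates then gives $\E_{\th_0}\mu_n\ge c\, n\t\z_\t$. A Chebyshev/Markov step on $\mu_n=\sum V_i$, viewed as a sum of bounded-variance functions of the independent $Y_i$, upgrades this to a high-probability statement: $\mu_n\ge 0.5\,\E_{\th_0}\mu_n\gtrsim n\t\z_\t$.

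Bounding $\E_\Pi(Z^2\mid Y^n,\t)\le C\mu_n^2$ reduces, by the independence computation above, to showing $\sum_i M_i\lesssim \mu_n^2$. Here I would use a sub-Gaussian-type control of the horseshoe posterior: conditional on $Y_i$ and $\t$, centered moments satisfy $M_i\lesssim V_i\cdot \bigl(V_i+\hat\th_i(\t)^2\bigr)$ (coming from the fact that the posterior density of $\th_i$ is dominated by a normal tail after integrating out $\l_i$), and then $\sum_i M_i\lesssim (\max_i V_i+\max_i\hat\th_i^2)\cdot\sum_i V_i$. Since $\max_i V_i$ and $\max_i\hat\th_i^2$ are, with high $P_{\th_0}$-probability, at most of logarithmic order, while $\sum_i V_i\asymp n\t\z_\t\to\infty$, the ratio $\sum M_i/\mu_n^2$ is $o(1)$, giving the desired moment bound and hence the result. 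The main obstacle I expect is the lower bound $\E V_i\gtrsim \t\z_\t$ for null coordinates; this requires careful integration against the horseshoe half-Cauchy scale prior, but it is a single-coordinate computation that is essentially done in the papers \cite{vdPas,contractionpaper} cited here and so can be quoted rather than re-derived.
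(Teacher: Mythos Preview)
Your overall architecture matches the paper's: reduce the radius lower bound to a second–moment argument on $W=\|\th-\hat\th(\t)\|_2^2$ (the paper uses a one–sided Chebyshev instead of Paley--Zygmund, but the information needed is the same), restrict to the $n-p_n\sim n$ zero coordinates, and show $\E_\Pi(W\given Y^n,\t)\gtrsim n\t\z_\t$ while $\var_\Pi(W\given Y^n,\t)=o\bigl((n\t\z_\t)^2\bigr)$ with $P_{\th_0}$-probability tending to one. The lower bound on $\mu_n$ via $\E_0 V_i\asymp\t\z_\t$ is exactly the paper's Lemma~\ref{LemmaMomentsOfThetaGivenY}, so that part is fine.

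The genuine gap is your fourth–moment step. The pointwise inequality $M_i\lesssim V_i(V_i+\hat\th_i(\t)^2)$ is false for the horseshoe posterior, and the ``sub-Gaussian tail'' heuristic does not rescue it. Take $Y_i=0$: by symmetry $\hat\th_i(\t)=0$, and with $r^2=\l^2\t^2/(1+\l^2\t^2)$ one has $V_i=\E_\l r^2\asymp\t$ while $M_i=3\,\E_\l r^4\asymp\t$ as well (both integrals pick up order-$\t$ mass from $\l\gtrsim 1/\t$ under the half-Cauchy), so $M_i/V_i^2\asymp 1/\t\to\infty$. More generally the horseshoe posterior given $(Y_i,\t)$ is a genuinely heavy-kurtosis mixture of normals (a small posterior probability on $\l_i\gtrsim 1/\t$ gives a component with variance near $1$ even when $V_i$ is tiny), so no bound of the form $M_i\lesssim V_i\cdot(\text{something }O_P(\log n))$ follows from sub-Gaussianity of the marginal tails. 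What the paper does instead is compute $\E_0 M_i\lesssim \t\z_\t^3$ directly (the third assertion of Lemma~\ref{LemmaMomentsOfThetaGivenY}, obtained by expanding the fourth cumulant via \eqref{EqPosteriorCumulants} and integrating against $\phi$), apply Markov to get $\sum_i M_i=O_P(n\t\z_\t^3)$, and then observe $n\t\z_\t^3/(n\t\z_\t)^2=\z_\t/(n\t)\to0$ exactly by the hypothesis $n\t/\z_\t\to\infty$.

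A smaller point: your Chebyshev step for $\mu_n$ needs more than ``bounded variance''. The crude bound $\var_0(V_i)\le C$ would give $\sd_0(\mu_n)\lesssim\sqrt n$, which is not $o(n\t\z_\t)$ under the stated hypothesis alone. One again needs the sharper $\E_0 V_i^2\lesssim\t\z_\t$ from Lemma~\ref{LemmaMomentsOfThetaGivenY}, so that $\var_0(\mu_n)\lesssim n\t\z_\t=o\bigl((n\t\z_\t)^2\bigr)$.
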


\begin{proof}
See Section \ref{Sec: lem: LBradius}.
\end{proof}

\begin{theorem}\label{thm: coverage_nonadapt}
If  $\t\geq \t_n$ and $\t\ra0$ and $p_n\ra\infty$ with $p_n=o(n)$, then, there exists a large enough $L>0$ such that
$$\liminf_{n\ra\infty}\inf_{\th_0\in \ell_0[p_n]}P_{\th_0}\big(\th_0\in \hat{C}_n(L, \t)\big)\geq 1-\a.$$
\end{theorem}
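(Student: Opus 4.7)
The plan is to combine the lower bound on the credible radius from Lemma~\ref{lem: LBradius} with an upper bound on the distance between the posterior mean $\hat\th(\t)$ and the true parameter $\th_0$, both scaled as $\sqrt{n\t\z_\t}$. Since the credible ball is $\{\th:\|\th-\hat\th(\t)\|_2\le L\,\hat r(\a,\t)\}$, the event $\{\th_0\in \hat C_n(L,\t)\}$ is exactly $\{\|\hat\th(\t)-\th_0\|_2\le L\,\hat r(\a,\t)\}$, so it suffices to show that on an event of probability at least $1-\a-o(1)$ uniformly over $\th_0\in\ell_0[p_n]$, both $\hat r(\a,\t)\ge \tfrac12\sqrt{n\t\z_\t}$ and $\|\hat\th(\t)-\th_0\|_2\le L'\sqrt{n\t\z_\t}$ hold for a suitable $L'=L/2$.

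First I would apply Lemma~\ref{lem: LBradius}: since $\t\ge \t_n=(p_n/n)\sqrt{\log(n/p_n)}$ and $p_n\to\infty$ with $p_n=o(n)$, one checks that $n\t/\z_\t\to\infty$, so the lemma yields $\hat r(\a,\t)\ge 0.5\sqrt{n\t\z_\t}$ with $P_{\th_0}$-probability tending to one, uniformly in $\th_0\in\ell_0[p_n]$ (uniformity is immediate because the conclusion does not involve $\th_0$). Next I would bound the mean-square error of the posterior mean. Write
\begin{equation*}
\E_{\th_0}\|\hat\th(\t)-\th_0\|_2^2 = \sum_{i:\th_{0,i}=0}\E_0\hat\th_i(\t)^2 + \sum_{i:\th_{0,i}\ne 0}\E_{\th_{0,i}}(\hat\th_i(\t)-\th_{0,i})^2.
\end{equation*}
Using the known single-coordinate horseshoe bounds from \cite{vdPas} and \cite{contractionpaper}, the zero-signal term contributes $O(\t\z_\t)$ per coordinate, and the nonzero-signal term contributes $O(\log(n/p_n))$ per coordinate (the minimax per-signal risk). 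Summing gives
\begin{equation*}
\E_{\th_0}\|\hat\th(\t)-\th_0\|_2^2 \lesssim n\t\z_\t + p_n\log(n/p_n)\lesssim n\t\z_\t,
\end{equation*}
where the last step uses $\t\ge \t_n$, since $n\t_n\z_{\t_n}\asymp p_n\log(n/p_n)$ and $\t\mapsto n\t\z_\t$ is increasing for small $\t$.

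Armed with this MSE bound, Markov's inequality yields, for any $L'>0$,
\begin{equation*}
P_{\th_0}\bigl(\|\hat\th(\t)-\th_0\|_2 > L'\sqrt{n\t\z_\t}\bigr)\le \frac{K}{(L')^2},
\end{equation*}
for a constant $K$ independent of $\th_0\in\ell_0[p_n]$. Choose $L'$ so large that $K/(L')^2<\a/2$, and set $L=2L'$. Intersecting with the event from Lemma~\ref{lem: LBradius}, with probability at least $1-\a/2-o(1)$ uniformly over $\ell_0[p_n]$ we have
\begin{equation*}
\|\hat\th(\t)-\th_0\|_2\le L'\sqrt{n\t\z_\t}\le 2L'\,\hat r(\a,\t)=L\,\hat r(\a,\t),
\end{equation*}
which is precisely $\th_0\in \hat C_n(L,\t)$. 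Taking the liminf and then $\a/2\downarrow 0$ inside the already-given level yields the claimed $\ge 1-\a$ bound.

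The main obstacle is verifying the per-coordinate MSE bounds with the right dependence on $\t$ rather than on $\t_n$; in particular the bound $\E_0\hat\th_i(\t)^2\lesssim \t\z_\t$ must be valid for the regime where $\t$ is possibly much larger than $\t_n$, so one must either invoke or mildly extend the corresponding lemma in \cite{vdPas}/\cite{contractionpaper}. Everything else is an essentially routine combination of Lemma~\ref{lem: LBradius}, the minimax contraction result, and Markov's inequality.
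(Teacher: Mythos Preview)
Your proposal is correct and follows essentially the same route as the paper: combine the radius lower bound from Lemma~\ref{lem: LBradius} with an MSE bound on $\|\hat\th(\t)-\th_0\|_2^2$ of order $n\t\z_\t$, then apply Markov's inequality. The paper simply cites Theorem~3.2 of \cite{vdPas} for the MSE bound $\E_{\th_0}\|\hat\th(\t)-\th_0\|_2^2\lesssim p_n\log(1/\t)+n\t\sqrt{\log(1/\t)}$ rather than re-splitting into zero and nonzero coordinates as you do, but that theorem is proved by exactly your decomposition, so there is no real difference. Two small clean-ups: the per-nonzero-coordinate bound is $O(\log(1/\t))$, not $O(\log(n/p_n))$ directly, though the latter dominates since $\t\ge\t_n$; and your final sentence about ``$\a/2\downarrow 0$'' is unnecessary and slightly confused, since having shown coverage probability $\ge 1-\a/2-o(1)$ you already have $\liminf\ge 1-\a/2\ge 1-\a$.
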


\begin{proof}
The probability of the complement of the event in the display 
is equal to $P_{\th_0}\bigl(\|\th_0-\hat\th(\t)\|_2>L\, \hat r(\a,\t)\bigr)$.
In view of Lemma~\ref{lem: LBradius}  this is bounded by $o(1)$ plus 
$$P_{\th_0}\bigl(\|\th_0-\hat\th(\t)\|_2>0.5L\sqrt{n\t\z_\t}\bigr)
\lesssim \frac{\E_{\th_0}\|\hat\th(\t)-\th_0\|_2^2}{L^2n\t\z_\t}.$$
By Theorem 3.2 of \cite{vdPas} 
the numerator on the right is bounded by a multiple
of $p_n\log (1/\t)+n\t\sqrt{\log 1/\t}$. By the assumption $\t\ge \t_n\ge 1/n$ the
quotient is smaller than $\a$ for appropriately large choice of $L$.
\end{proof}

Theorem \ref{thm: coverage_nonadapt} combined with the upper bound on the posterior contraction rate  in \cite{vdPas} show that a (slightly enlarged)  credible ball centered at the posterior mean is of rate-adaptive size and covers the truth provided $\t$ is chosen of the order of the ``optimal'' value $\t_n(p_n)$. This is not possible in general, as it requires knowing the number of signals. In the next sections, we will show that if empirical Bayes estimators are ``close" to $\t_n(p_n)$, or if a hyperprior on $\t$ places ``enough'' mass on a neighborhood of a quantity of order $\t_n(p_n)$, then adaptation to the unknown number of signals is possible. 

\subsection{Adaptive credible balls}
\label{sec:coverage.bayes} 
We now turn to credible sets in the more realistic scenario that the sparsity parameter $p_n$ is not available. We investigate both the empirical Bayes and the hierarchical Bayes credible balls. We show that  both empirical and hierarchical credible balls cover the true parameter $\theta_0$, if $\th_0$ satisfies the ``excessive-bias restriction'', given below, under some conditions on the empirical Bayes plug-in estimate or the hierarchical Bayes hyperprior on $\t$.

\subsubsection{The excessive-bias restriction}

Unfortunately, coverage can be guaranteed only for a selection of true parameters $\th_0$.  
The problem is that a data-based estimate of sparsity may lead to \emph{over-shrinkage}, due
to a too small value of the plug-in estimator or concentration of the posterior distribution of $\t$ too close to zero.
Such over-shrinkage makes the credible sets too small and close to zero. A simple condition preventing 
over-shrinkage is that a sufficient number of nonzero parameters $\th_{0,i}$ is above
the ``detection boundary''. It turns out  that the correct threshold for detection is
given by $\sqrt{2\log (n/p_n)}$. This leads to the following condition.

\begin{assumption}[self-similarity] 
A vector $\th_0\in\ell_0[p]$ is called \emph{self-similar} if 
\begin{align}
\#\bigl(i: |\th_{0,i}|\geq A \sqrt{2\log (n/p)}\bigr)\ge \frac p{C_s}.\label{def: selfsim}
\end{align}
The two constants $C_s$ and $A$ will be fixed to universal values, where necessarily $C_s\ge1$
and it is required that $A>1$.
\end{assumption}

The problem of over-shrinkage is comparable to the problem of over-smoothing in the
context of nonparametric density estimation or regression, due to the choice of a too
large bandwidth or smoothness level. The preceding self-similarity condition plays the same
role as the assumptions of ``self-similarity'' or ``polished tail'' used by 
\cite{PicTri,GineNickl,Bull,nickl:szabo:2014, SzVZ2,Sniekers2,rousseau:sz:2016} in their investigations of confidence sets
in nonparametric density estimation and regression, or the ``excessive-bias'' restriction 
in \cite{belitser:2015} employed in the context of Besov-regularity classes in the normal mean model. 

The self-similarity condition is also reminiscent of the \emph{beta-min condition} for the adaptive Lasso
\cite{vdGeer2011, Buhlman2011}, which imposes a lower bound on the nonzero signals in order to
achieve consistent selection of the set of nonzero coordinates of $\th_0$. However, the present condition is
different in spirit both by the size of the cut-off and by requiring only that a fraction of the nonzero means is above
the threshold.

For ensuring coverage of credible balls the condition can be weakened to 
the following more technical condition.

\begin{assumption}[excessive-bias restriction] 
\label{AssumptionEB}
A vector $\th_0\in\ell_0[p]$ satisfies the \emph{excessive-bias restriction} 
 for constants $A>1$ and  $C_s,C>0$, if there exists an integer $q\ge1$ with
\begin{align}\label{condition: EB}
\sum_{i: |\th_{0,i}|< A\sqrt{2\log (n/q)}}\th_{0,i}^2\le C q\log (n/q),
\qquad \# \bigl(i: |\th_{0,i}|\ge A\sqrt{2\log (n/q)}\bigr)\ge \frac{q}{C_s}.
\end{align}
The set of all such vectors $\th_0$ (for fixed constants $A, C_s,C$) is denoted by $\Theta[p]$,
and $\tilde p=\tilde p (\th_0)$ denotes $\# \bigl(i: |\th_{0,i}|\ge A\sqrt{2\log (n/q)}\bigr)$, for the
smallest possible $q$.
\end{assumption}

If $\th_0\in \ell_0[p]$ is self-similar, then it satisfies the excessive-bias restriction with $q=p$,
$C=2A^2$ and the same constants $A$ and $C_s$. This follows, because the sum 
in \eqref{condition: EB} is trivially bounded by $\#(i: \th_{0,i}\not=0)\, A^2  2\log (n/q)$. 

In the following example we show that the excessive-bias restriction
is also implied by a condition with the same name introduced in \cite{BelitserNurushev}.
The latter condition motivated Assumption~\ref{AssumptionEB}, which is more
suited to our investigation of the horseshoe credible sets.

\begin{ex}
For a given $\th_0$ and any subset $I\subset\{1,2,\ldots, n\}$ let 
$$G(I)=\sum_{i\in I^c}\th_{0,i}^2+2A^2 |I| \log \frac{ne}{|I|}.$$
In \cite{BelitserNurushev} $\th_0$ is defined to satisfy the \emph{excessive-bias restriction}
if $G$ takes its minimum at a nonempty set $\tilde I$ such that 
$G(\tilde I)\le C |\tilde I| \log ({ne}/{|\tilde I|})$.

We now show that in this case $\th_0$ also satisfies Assumption~\ref{AssumptionEB}, with $q=|\tilde I|$.
Let $\th_{0,i}$ be a coordinate with $i\in \tilde I$ of minimal 
absolute value $|\th_{0,i}|=\min\{|\th_{0,j}|: j\in \tilde I\}$. From $G(\tilde I)\le G(\tilde I-\{i\})$
we obtain that $\th_{0,i}^2\ge 2A^2 |\tilde I|\log (ne/|\tilde I|)-2A^2 (|\tilde I|-1)\log (ne/(|\tilde I|-1))\ge 2A^2\log(n/|\tilde I|)$,
since the derivative of $x\mapsto x\log (ne/x)$ is $\log (n/x)$.
Consequently, first $\#(j: \th_{0,j}^2\ge 2A^2\log (n/|\tilde I|))\ge \#(j: \th_{0,j}^2\ge \th_{0,i}^2)\ge|\tilde I|$, 
by the minimising property of $\th_{0,i}$, verifying the second inequality in \eqref{condition: EB}. Second 
$\{j: \th_{0,j}^2<2A^2 \log (n/q)\}\subset \{j: \th_{0,j}^2<\th_{0,i}^2\}\subset \tilde I^c$,
again by the minimising property of $\th_{0,i}$. Thus the first inequality of \eqref{condition: EB}
follows by the fact that $G(\tilde I)\le C |\tilde I| \log ({ne}/{|\tilde I|})$.
\end{ex}

\subsubsection{Empirical Bayes condition and the MMLE }
To obtain coverage in the empirical Bayes setting, we replace Condition~\ref{cond.eb} by the following.

\begin{cond}\label{cond.adapt.coverage.eb}
The estimator $\widehat\t_n$ satisfies,  for a given sequence $p_n$ and some constant $C>1$,
with $\tilde p=\tilde p(\th_0)$,
\begin{equation*}
\inf_{\th_0 \in \Theta[p_n]} P_{\th_0}\bigl(C^{-1}\t_n(\tilde{p})\leq\widehat\t_n
	\leq C\t_n(\tilde{p})\bigr)\rightarrow 1.
\end{equation*}
\end{cond}

The lower bound of order $\t_n(\tilde p)$ instead of $1/n$ prevents over-shrinkage. Although this condition may appear more restrictive than Condition~\ref{cond.eb}, 
Condition~\ref{cond.adapt.coverage.eb} may not be more stringent than Condition~\ref{cond.eb}, because it only
needs to hold for vectors $\th_0$ that meet the excessive-bias restriction.

For the coverage results in this paper, we need the additional result that the MMLE is of the order $\t_n(\tilde p(\theta_0)$ for all vectors $\theta_0$ satisfying the excessive-bias restriction.

\begin{lemma}\label{thm: LB_tau}
For $p_n\ra\infty$ such that $p_n=o(n)$, the MMLE $\widehat\t_n$ satisfies Condition~\ref{cond.adapt.coverage.eb}.
\end{lemma}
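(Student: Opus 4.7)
The goal is to show, uniformly over $\th_0 \in \Theta[p_n]$, that the MMLE $\mmle$ lies in a multiplicative neighbourhood of $\t_n(\tilde p(\th_0))$ with $P_{\th_0}$-probability tending to one. Let $\ell_n(\t) = \sum_{i=1}^n \log m_\t(Y_i)$ be the marginal log-likelihood on $[1/n,1]$, where $m_\t$ is the horseshoe marginal density. I would split the argument into an upper bound $\mmle \le C\t_n(\tilde p)$ and a lower bound $\mmle \ge C^{-1}\t_n(\tilde p)$, each obtained by showing that the expected score $\EE_{\th_0}\dot\ell_n(\t)$ has a definite sign on the complement of the desired region and then transferring this to $\dot\ell_n(\t)$ itself by uniform concentration on a fine grid in $\t$, interpolating by monotonicity arguments.

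For the upper bound, I would refine the proof of Condition~\ref{cond.eb} in \cite{contractionpaper}, which gives $\mmle \le C\t_n(p_n)$. That argument exploits the fact that for $\t$ large, the expected score $\EE_{\th_0}[\partial_\t \log m_\t(Y_i)]$ at a zero coordinate is strongly negative of order $-\t\log(1/\t)$, while at a nonzero coordinate it is at most $O(1/\t)$, so that the zero coordinates dominate once $\t$ exceeds a multiple of $(p_n/n)\sqrt{\log(n/p_n)}$. In the present setting, the first clause of the excessive-bias restriction bounds the total $\ell_2$-mass of the sub-threshold nonzero coordinates by $Cq\log(n/q)$, which is enough to treat them essentially as zeros in the score computation; the effective signal count then becomes $\tilde p$ rather than $p_n$, yielding the sharper upper bound.

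For the lower bound, the plan is to use the second clause of the excessive-bias restriction, which guarantees at least $\tilde p/C_s$ coordinates with $|\th_{0,i}|\ge A\sqrt{2\log(n/q)}$ and $A>1$. For such a coordinate, $|Y_i|$ exceeds the detection threshold with high probability, and standard estimates for the horseshoe marginal (cf.\ Lemma~\ref{lem: E_nonzero_m(Y)}) show that $m_\t(Y_i)$ is essentially linear in $\t$ for small $\t$, so that $\log m_\t(Y_i) - \log m_{\t_n(\tilde p)}(Y_i) \le \log\bigl(\t/\t_n(\tilde p)\bigr) + O(1)$. Summing over the $\tilde p/C_s$ detectable coordinates yields a negative contribution of order $\tilde p\log\bigl(\t_n(\tilde p)/\t\bigr)$ to $\ell_n(\t)-\ell_n(\t_n(\tilde p))$, while the contribution from the remaining coordinates (zeros plus sub-threshold nonzeros) can be bounded above by $O(\tilde p)$ uniformly in $\t \le \t_n(\tilde p)$, using the first clause of the excessive-bias restriction for the sub-threshold ones and standard normal tail bounds for the zeros. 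This forces $\ell_n(\t) < \ell_n(\t_n(\tilde p))$ for $\t \le c\t_n(\tilde p)$ with $c$ sufficiently small, excluding the MMLE from that range.

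The main obstacle I anticipate is the uniform control of the sub-threshold nonzero contribution in the lower bound: these coordinates may be numerous, and the likelihood ratio $m_\t(Y_i)/m_{\t_n(\tilde p)}(Y_i)$ depends delicately on $|Y_i|$. The right tool is the $\ell_2$-budget $Cq\log(n/q)$ from the first clause, fed through either Cauchy--Schwarz or a local Taylor expansion of $\log m_\t$ in $|Y_i|$ to convert an energy bound into a bound on the aggregate log-likelihood effect, combined with uniform-in-$\t$ concentration of $\ell_n$ via a Bernstein-type inequality on a logarithmic grid and monotonicity of $\t \mapsto \EE_{\th_0}\dot\ell_n(\t)$ in each regime to interpolate between grid points.
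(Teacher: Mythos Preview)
Your overall strategy matches the paper's: split into upper and lower bounds, and for each show that the score $\frac{d}{d\t}M_\t(Y^n)=\frac1\t\sum_i m_\t(Y_i)$ has a definite sign on the complement of the target interval. (A notational warning: in this paper $m_\t$ denotes the score function defined in \eqref{def: m}, not the marginal density, which is $\psi_\t$.)

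Where the paper's argument differs from your plan is in the handling of the sub-threshold nonzero coordinates, which you correctly flag as the main obstacle. The paper does \emph{not} feed the $\ell_2$-budget through Cauchy--Schwarz or a Taylor expansion of the score. Instead it uses a three-way partition $I_0\cup I_1\cup I_2$ with thresholds at $\d_n\z_\t^{-2}$ and (for the lower bound) $A_1\tilde\z$ on the observation scale, and uses the excessive-bias $\ell_2$-budget purely as a \emph{counting} device: since $\sum_{|\th_{0,i}|<\tilde\z}\th_{0,i}^2\lesssim \tilde p\log(n/\tilde p)$, the number of coordinates with $|\th_{0,i}|$ in the intermediate range $(\d_n\z_\t^{-2},\tilde\z)$ is at most a multiple of $\d_n^{-2}\z_\t^4\,\tilde p\log(n/\tilde p)$. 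One then bounds each coordinate's contribution to the score crudely (by $-1$ from below via monotonicity of $m_\t$, or by $\t^{1/16}\z_\t^{-1}$ from above via Proposition~\ref{prop:Em(Y)}). This cardinality-then-crude-bound route avoids the delicate dependence of the score on $|Y_i|$ that makes a direct Cauchy--Schwarz awkward.

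The other difference is in the uniform-in-$\t$ control: rather than Bernstein on a grid plus interpolation, the paper invokes the dedicated uniform law Lemma~\ref{lem: uniform0} (from the companion paper) for the normalised score over $I_0$ and $I_1$, and pointwise bounds from Lemma~\ref{Lemma: tech1} for $I_2$. Your grid-plus-monotonicity idea could be made to work, but note that $\t\mapsto m_\t(y)$ is not monotone for all $y$, so the interpolation step would need care; the entropy/chaining route already packaged in Lemma~\ref{lem: uniform0} is more direct here.
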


\begin{proof}
See Section \ref{sec: thm: LB_tau}.
\end{proof}

The relative performances of the empirical Bayes procedures with the MMLE or the ``simple" estimator are studied further in Section \ref{sec:simulation}.

\subsubsection{Main result on adaptive credible balls}
Under the excessive-bias restriction, both the empirical and hierarchical Bayes credible balls are honest and adaptive. In the hierarchical Bayes setting, the hyperprior is assumed to be supported on $[1/n, 1]$, similar to the MMLE. 

\begin{theorem}\label{thm:coverage}
Let $\tilde p_n\le p_n$ be given sequences with $\tilde p_n\ra\infty$ and $p_n=o(n)$.
If the estimator $\widehat\t_n$ of $\t$ satisfies Condition~\ref{cond.adapt.coverage.eb}, then
for a sufficiently large constant $L$ the empirical Bayes credible ball $\hat{C}_n(L,\hat\t_n)$ 
has honest coverage and rate adaptive (oracle) size:
\begin{align*}
\liminf_{n\ra\infty}\inf_{\th_0\in\Theta[p_n], \tilde p(\th_0)\ge \tilde p_n} P_{\th_0}\Big(\th_0\in \hat{C}_n(L,\hat\t_n) \Big)\geq 1-\a,\\
\inf_{\th_0\in \Theta[p_n]}P_{\th_0}\Bigl(\diam\bigl(\hat{C}_n(L,\widehat\t_n) \bigr)
\lesssim \sqrt{\tilde{p}\log(n/\tilde p)}\Bigr)\rightarrow 1.
\end{align*}
In particular, these assertions are true for the MMLE.  Furthermore,
if $\tilde{p}_n\geq C\log n$ for a sufficiently large constant $C$, then the hierarchical Bayes method
with $\t\sim\pi_n$ for $\pi_n$ probability densities on $[1/n,1]$ that are bounded away from zero
also yields adaptive and honest confidence sets: for sufficiently large $L$,
\begin{align*}
\liminf_{n\ra\infty}\inf_{\th_0\in\Theta[p_n], \tilde p(\th_0)\ge \tilde p_n} P_{\th_0}\Big(\th_0\in \hat{C}_n(L) \Big)\geq 1-\a,\\
\inf_{\th_0\in\Theta[p_n], \tilde p(\th_0)\ge \tilde p_n}
P_{\th_0}\Big(\diam\bigl(\hat{C}_n(L)\bigr)\lesssim \sqrt{\tilde{p}\log (n/\tilde p)}\Big)\rightarrow 1.
\end{align*}
\end{theorem}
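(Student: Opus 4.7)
The plan is to prove the empirical Bayes assertions first and then bootstrap the hierarchical Bayes assertions from them via concentration of the marginal posterior of $\tau$. Throughout, write $r_n = \sqrt{\tilde p\log(n/\tilde p)}$ for the oracle rate. Coverage follows from two ingredients: (a) a lower bound $\hat r(\alpha,\hat\tau_n)\gtrsim r_n$ on the credible radius, and (b) an upper bound $\|\hat\theta(\hat\tau_n)-\theta_0\|_2 \leq L r_n$ with $P_{\theta_0}$-probability at least $1-\alpha+o(1)$; the diameter assertion then follows from a matching upper bound on the credible radius, which cannot exceed the order of the posterior contraction rate.

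For ingredient (a) in the empirical Bayes case, I would fix $\theta_0\in\Theta[p_n]$ with $\tilde p(\theta_0)\geq \tilde p_n$ and restrict to the event $A_n=\{C^{-1}\tau_n(\tilde p)\leq \hat\tau_n\leq C\tau_n(\tilde p)\}$, which has probability tending to one by Condition~\ref{cond.adapt.coverage.eb} (for the MMLE this is Lemma~\ref{thm: LB_tau}). On $A_n$ one has $n\hat\tau_n/\zeta_{\hat\tau_n}\to\infty$, so a uniform-in-$\tau$ version of Lemma~\ref{lem: LBradius} gives $\hat r(\alpha,\hat\tau_n)\geq 0.5\sqrt{n\hat\tau_n\zeta_{\hat\tau_n}}\asymp r_n$; the uniform version is obtained by combining the variance argument underlying Lemma~\ref{lem: LBradius} with a covering of the window $[C^{-1}\tau_n(\tilde p),C\tau_n(\tilde p)]$. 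For ingredient (b) I would establish a refined oracle bound $\mathbb{E}_{\theta_0}\|\hat\theta(\tau)-\theta_0\|_2^2\lesssim r_n^2$ uniformly over $\tau$ in this window, by splitting the sum over coordinates at the threshold $A\sqrt{2\log(n/q)}$: coordinates below the threshold contribute $\sum\theta_{0,i}^2\leq Cq\log(n/q)\lesssim r_n^2$ thanks to the excessive-bias restriction, coordinates above the threshold contribute $O(\log(n/\tilde p))$ each via per-coordinate refinements of Theorem 3.2 of \cite{vdPas}, and the zero coordinates together contribute at most $n\tau\sqrt{\log(1/\tau)}\lesssim r_n^2$. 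Markov's inequality combined with (a) then gives coverage, and the matching posterior-contraction upper bound from \cite{contractionpaper} controls the diameter.

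For the hierarchical Bayes statements, the strategy is to show that the marginal posterior $\pi(\tau\mid Y^n)$ concentrates on some interval $[c_1\tau_n(\tilde p),c_2\tau_n(\tilde p)]$ with $P_{\theta_0}$-probability tending to one, and then to reduce to the empirical Bayes argument conditionally on $\tau$. Since $\pi_n$ is bounded below on $[1/n,1]$, the prior mass of a neighborhood of size $\tau_n(\tilde p)$ around the target is of order at least $(\log n)/n$, which under the hypothesis $\tilde p_n\gtrsim\log n$ dominates the factor $e^{-c\tilde p}$ arising in the marginal-likelihood comparison of Lemma~\ref{lem: E_nonzero_m(Y)}; the localisation argument of the hierarchical contraction proof in \cite{contractionpaper} then applies. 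Once $\tau\mid Y^n$ is localised, the full posterior mean $\hat\theta$ and credible radius $\hat r(\alpha)$ differ from their empirical Bayes analogues by at most $O(r_n)$, so both coverage and diameter transfer.

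The main obstacle is the uniform-in-$\tau$ control demanded by ingredients (a) and (b): plugging in a random $\hat\tau_n$, or integrating against $\pi(\tau\mid Y^n)$, is admissible only if the constants in the lower bound of Lemma~\ref{lem: LBradius} and in the $L^2$-risk bound for $\hat\theta(\tau)$ do not depend on the particular $\tau\in[C^{-1}\tau_n(\tilde p),C\tau_n(\tilde p)]$. A secondary subtlety in the hierarchical case is that mixing over $\tau$ can inflate the credible radius; verifying that this inflation remains of order $r_n$ requires quantifying the displacement of $\hat\theta(\tau)$ as $\tau$ varies over the localised window and using that the mixture variance decomposes into the within-$\tau$ variance plus a between-$\tau$ piece of the same order.
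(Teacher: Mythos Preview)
Your proposal is correct and follows essentially the same route as the paper: a uniform-in-$\tau$ version of Lemma~\ref{lem: LBradius} over the window $[C^{-1}\tau_n(\tilde p),C\tau_n(\tilde p)]$ for the radius lower bound, a coordinate-wise MSE bound exploiting the excessive-bias restriction for the estimation error, and, in the hierarchical case, concentration of $\pi(\tau\mid Y^n)$ on that same window together with a Jensen-type comparison between $\hat\theta$ and $\hat\theta(\tau)$. The only cosmetic difference is the split point for the coordinate sum (the paper cuts at $\zeta_{\tilde\tau_n}/5$ rather than at $A\sqrt{2\log(n/q)}$, which makes the per-coordinate bound $\tilde\tau_n\zeta_{\tilde\tau_n}+\theta_{0,i}^2$ for the ``small'' bucket easier to justify via Lemma~\ref{LemmaBoundsPostMeanVariance}(iii), and then uses the excessive-bias restriction to count the intermediate coordinates); your ``below-threshold contributes $\sum\theta_{0,i}^2$'' omits a harmless $\tilde\tau_n\zeta_{\tilde\tau_n}$ variance term per nonzero small coordinate, but the conclusion is unchanged.
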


\begin{proof}
See Section \ref{Sec:coverage_adaptive}.
\end{proof}

It may be noted that for self-similar $\th_0$ the square diameter of the credible balls
is of the order $p\log (n/p)$, improving on the square contraction rate $p\log n$ obtained in 
 \cite{contractionpaper}. For parameters satisfying the excessive-bias restriction,
this may further improve to $\tilde p\log (n/\tilde p)$.

\section{\label{sec:simulation}
Simulation study}
In the first simulation study in Section \ref{sec:sim_coverage}, we compare four versions of the horseshoe (empirical Bayes with two different estimators and hierarchical Bayes with two different priors) and evaluate the coverage properties and interval lengths of the resulting credible intervals. In addition, we include an approximation to the credible intervals based on the normal distribution.

In the  simulation study in Section \ref{sec:sim_selection}, we compare the model selection properties of the method based on credible intervals resulting from the horseshoe with the MMLE, as discussed in Section \ref{sec:fdr}, to the thresholding method introduced by \cite{Carvalho2010}, with the MMLE of $\tau$ plugged in. We use the MMLE because the best results are obtained for the horseshoe with MMLE in the first simulation in Section \ref{sec:sim_coverage}.
All simulations were carried out using the R package `horseshoe' \cite{horseshoepackage}.

\subsection{Coverage, interval length, and $\tau$}\label{sec:sim_coverage}
Several MCMC samplers and software packages are available for 
computation of the posterior distribution \cite{Scott2010-2, Makalic2015,  Gramacy2014, horseshoepackage, fasthorseshoe}.

We study the relative performances of the empirical Bayes and hierarchical Bayes approaches further through simulation studies, 
extending the simulation study in \cite{vdPas}.
We consider  empirical Bayes combined with either (i) the simple estimator (with $c_1 = 2, c_2 = 1$)
or (ii) the MMLE, and for hierarchical Bayes with either (iii) a Cauchy prior on $\t$, or (iv) a Cauchy prior truncated to $[1/n, 1]$ on $\t$. We  study the coverage and average lengths of the marginal credible intervals resulting from these four methods, as well as intervals based solely on the posterior mean and variance. In addition, we study intervals of the form $\hat\th_i(y_i, \mmle) \pm 1.96\sqrt{\var(\th_i \given y_i, \mmle)}$,  based on a normal approximation to the posterior, where $\hat\th_i(y_i, \mmle)$ is the posterior mean and $\var(\th_i \given y_i, \mmle)$ refers to the posterior variance, both with the MMLE plugged in. We include the approximation because it offers a computational advantage over the other methods, as no MCMC is required. 

We  consider a mean vector of length $n = 400$, with $p_n \in \{20, 200\}$. We draw the nonzero means from a $\mathcal{N}(A, 1)$-distribution, with $A = c\sqrt{2\log{n}}$ for $c \in \{1/2, 1, 2\}$, corresponding to most nonzero means being below the universal threshold, close to the universal threshold, or well past the universal threshold, respectively. In each of the $N = 500$ iterations, we created the 95\% marginal credible sets for the hierarchical and empirical Bayes methods by taking the 2.5\%- and 97.5\%-quantiles of the MCMC samples as the endpoints. We did not include a blow-up factor.

Figure \ref{fig:coverage} gives the coverage results averaged over the 500 iterations, for all parameters, and separately for the $p_n$ nonzero means and the $(n-p_n)$ zero means. The average lengths of the credible sets, again for all signals and separately for the nonzero and zero means, are displayed in Figure \ref{fig:length}. Figure \ref{fig:tau} gives the mean value of $\t$ - in the hierarchical Bayes settings, the posterior mean of $\t$ was recorded for each iteration. No value is given for the normal approximation, as it uses the MMLE as a plug-in value for $\t$. 

\begin{figure}[h!]
\begin{center}
\includegraphics[width = 0.9\textwidth]{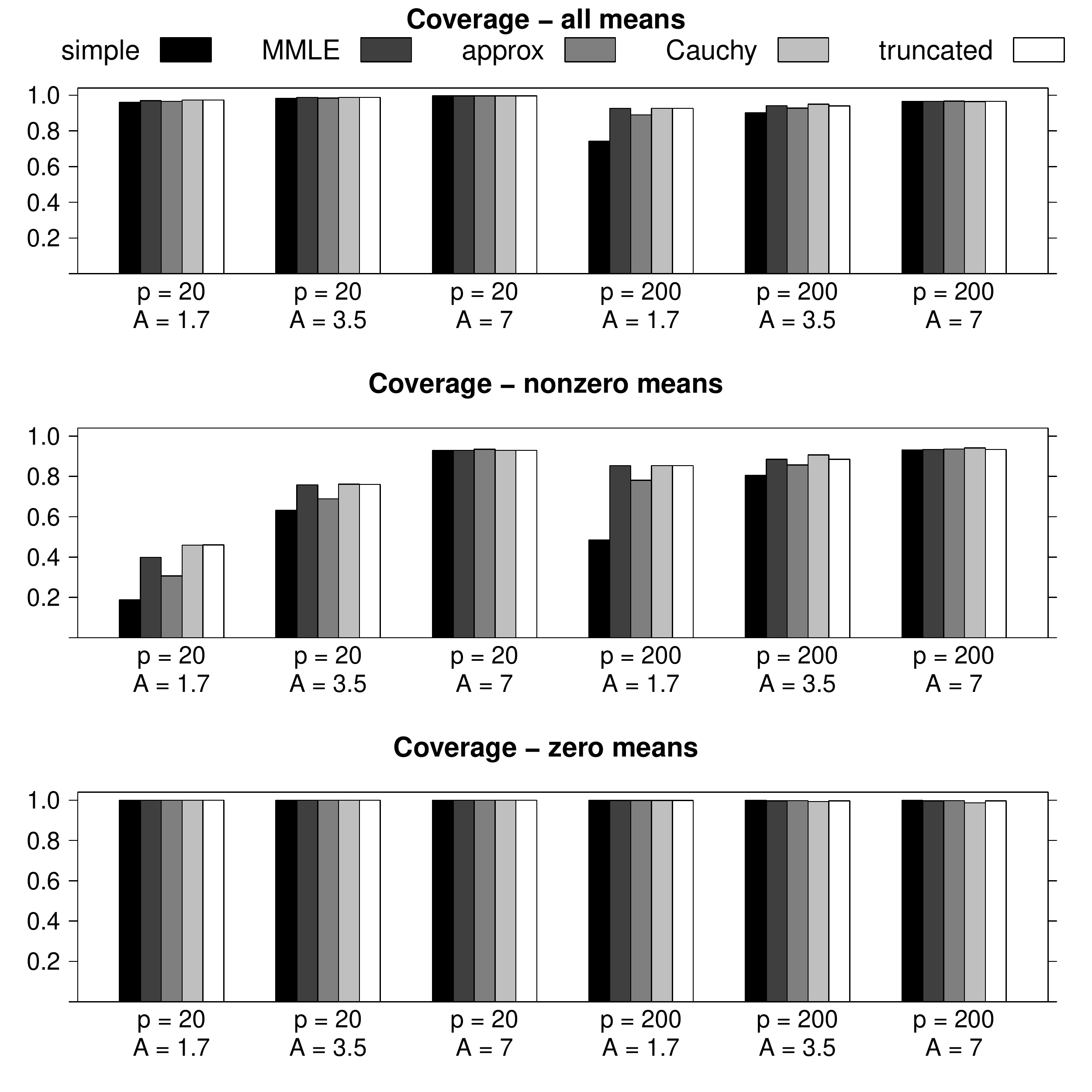} 
\caption{Average coverage of all parameters (top), the nonzero means (middle) and the zero means
  (bottom) for the five methods, from left to right: empirical Bayes with simple estimator
  ($c_1 = 2, c_2 = 1$) and MMLE, normal approximation, hierarchical Bayes with Cauchy prior on $\t$
  and with Cauchy prior truncated to $[1/n, 1]$. The $p_n$ nonzero means were drawn from a
  $\mathcal{N}(A, 1)$ distribution. Results are based on averaging over 500 iterations.}
\label{fig:coverage}
\end{center}
\end{figure}

\begin{figure}[h!]
\begin{center}
\includegraphics[width = 0.9\textwidth]{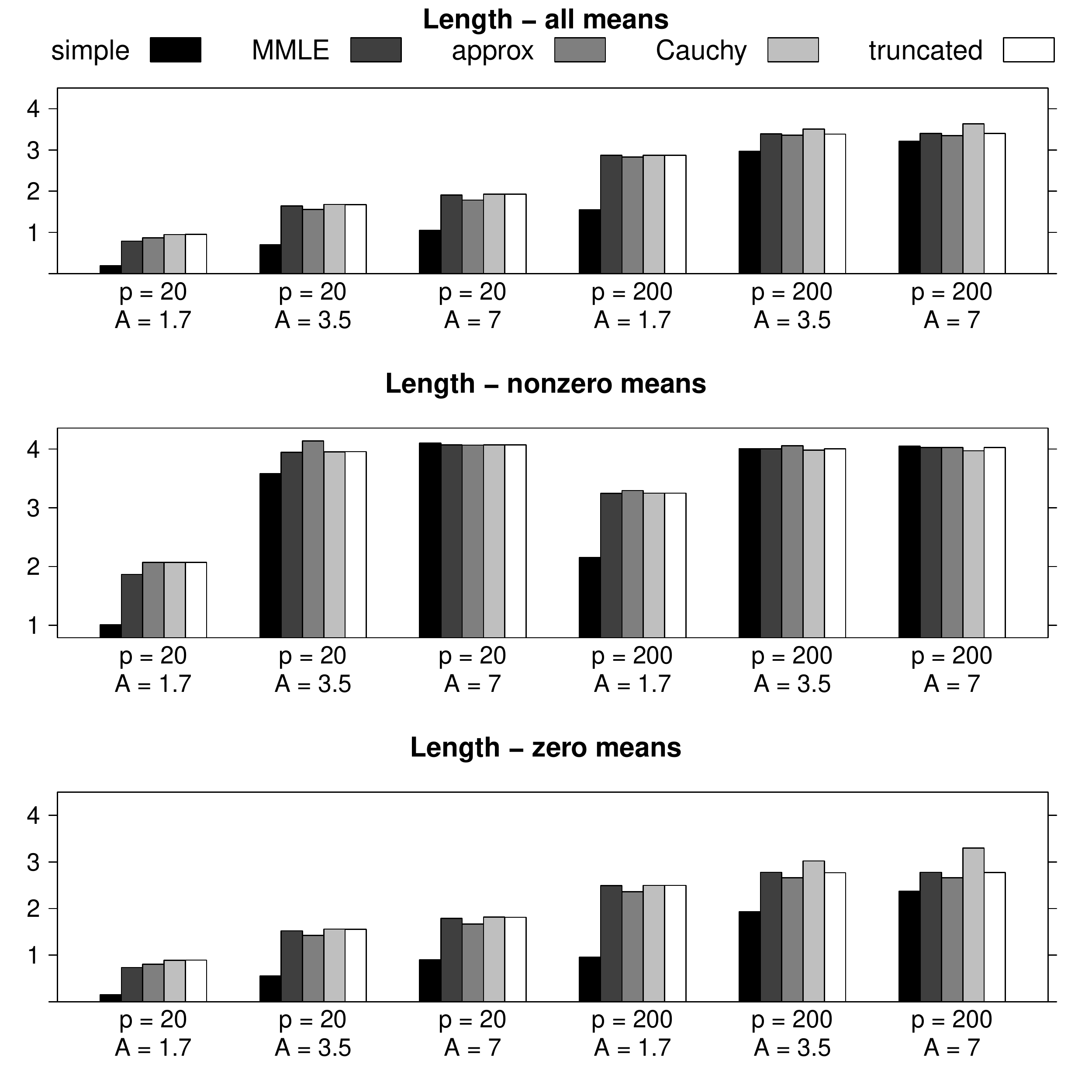} 
\caption{Average length of the credible sets of all parameters (top), the nonzero means (middle) and
  the zero means (bottom) for the five methods, from left to right: empirical Bayes with simple
  estimator ($c_1 = 2, c_2 = 1$) and MMLE, normal approximation, hierarchical Bayes with Cauchy
  prior on $\t$ and with Cauchy prior truncated to $[1/n, 1]$. The $p_n$ nonzero means were drawn
  from a $\mathcal{N}(A, 1)$ distribution. Results are based on averaging over 500 iterations. }
\label{fig:length}
\end{center}
\end{figure}

\begin{figure}[h!]
\begin{center}
\includegraphics[width = 0.9\textwidth]{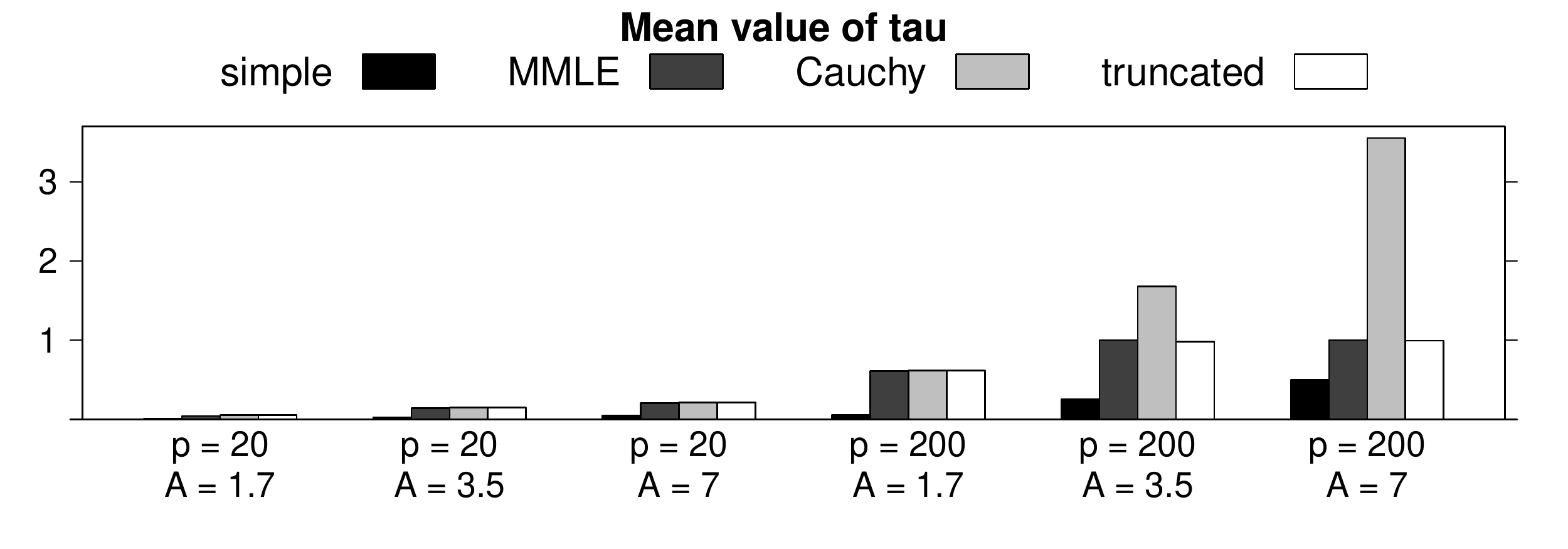} 
\caption{Average value of $\t$ for four methods, from left to right: empirical Bayes with simple
  estimator ($c_1 = 2, c_2 = 1$) and MMLE, hierarchical Bayes with Cauchy prior on $\t$ and with
  Cauchy prior truncated to $[1/n, 1]$. For the hierarchical Bayes approaches, the posterior mean of
  $\t$ was recorded for each iteration. The $p_n$ nonzero means were drawn from a
  $\mathcal{N}(A, 1)$ distribution. Results are based on averaging over 500 iterations. .}
\label{fig:tau}
\end{center}
\end{figure}

We remark on some aspects of the results. First, we see that the zero means are nearly perfectly covered by all methods in all settings, and the main differences lie in the nonzero means. Secondly, coverage of the nonzero means improves as their values increase. Thirdly, the lengths of the credible intervals adapt to the signal size. They are smaller for the zero means than for the nonzero means, and smaller for the nonzero means corresponding to $A = (1/2)\sqrt{2\log{n}}$ than for the nonzero means corresponding to $A = \sqrt{2\log{n}}$ and $A = 2\sqrt{2\log{n}}$, while there is not much difference between the interval lengths in those latter two settings, suggesting that the interval length does not increase indefinitely with the size of the nonzero mean.

Furthermore, empirical Bayes with the simple estimator achieves the lowest overall coverage, and especially bad coverage of the nonzero means. This appears to be due to smaller interval lengths caused by lower estimates of $\t$ compared to the other methods. The normal approximation leads to better coverage than the simple estimator, and has the highest coverage of the nonzero means, even though the corresponding intervals are slightly shorter than those of empirical Bayes with the MMLE and the hierarchical Bayes approaches. However, its coverage of nonzero means is worse than that of those three methods, while the corresponding intervals are longer, except in the case where $A$ is largest. The normal approximation appears to be reasonable for very large signals only.

The hierarchical Bayes approach with a non-truncated Cauchy on $\t$ leads to the highest overall coverage and coverage of the nonzero means, albeit by a small margin. The price is slightly larger intervals compared to the other methods, mostly for the zero means. These larger intervals are most likely due to the larger values of $\t$ that are employed, this being the only approach that allows for estimates of $\t$ larger than one, and it avails itself of the opportunity in the non-sparse setting. Finally, we again observe that the results for empirical Bayes with the MMLE and  hierarchical Bayes with a truncated Cauchy lead to highly similar results. Their coverage is comparable to that of hierarchical Bayes with a non-truncated Cauchy in all settings except when $p_n = 200$ and $A$ is at least at the threshold, in which case the non-truncated Cauchy has slightly better coverage. Their intervals are shorter on average, because $\t$ is not allowed to be larger than one.

In conclusion, empirical Bayes with the simple estimator should not be used for uncertainty quantification. The normal approximation is faster to compute than the marginal credible sets, but leads to worse coverage of the nonzero compared to the empirical Bayes with the MMLE and the hierarchical Bayes approaches, unless the nonzero means are very large. The results of those latter three methods are very similar to each other.  All these results can be understood in terms of the behaviour of the estimate of $\t$: larger values lead to larger intervals and better coverage, which may lead to worse estimates however (as seen in the previous section). Empirical Bayes with the MMLE, or hierarchical Bayes with a truncated Cauchy, appear to be the best choices when considering both estimation and coverage. Those two approaches yield highly similar results and the choice for one over the other may be based on other considerations such as computational ones.

\subsection{\label{sec:sim_selection}Model selection}
We compare the procedure based on credible intervals studied in Section \ref{sec:fdr} to the thresholding method introduced in \cite{Carvalho2010}. Two scenarios are considered. In the first, the signals are either ``small'', ``intermediate'' or ``large'', as defined in Section \ref{sec:adaptive_intervals}. In the second, all signals are drawn from a distribution. 

In the credible interval method, a parameter is selected as a signal if zero is not contained in the corresponding credible interval. For the thresholding method of \cite{Carvalho2010}, the posterior mean is divided by the observation. The result is a number between zero and one, which indicates the amount of shrinkage of that particular observation. If this number is larger than 0.5, the corresponding parameter is considered a signal. For both methods, we estimate $\tau$ by the MMLE. 

In the first scenario, we have $n$ observations, with $p_n$ signals. The $p_n$ signals are divided into three groups, corresponding to the three intervals of Section \ref{sec:adaptive_intervals}. The small ones are equal to $1/n$, the intermediate ones are $0.5\sqrt{2\log(1/\tau_n(p_n))}$, and the large ones are equal to $1.5\sqrt{2\log{n}}$. We study four combinations of $n$ and $p_n$: $n = 400, p_n = 60$; $n = 800, p_n = 60$; $n = 800, p_n = 120$ and $n = 1600, p_n = 120$. We count the number of false positives, that is the noise signals that are incorrectly selected as signals, and the number of correctly selected signals in each group. The number of true discoveries, averaged over $N = 500$ iterations, are in Figure \ref{fig:scen1}, and the FDR is in the upper left panel of Figure \ref{fig:fdr}.

In the second scenario, all signals are drawn from a distribution:  the Laplace  distribution with dispersion parameter equal to 3, the Gamma distribution with shape and scale equal to 2, or the Cauchy distribution with scale equal to 5. The number of false positives and the number of correctly selected variables are counted. The number of true discoveries, averaged over $N = 100$ iterations, are in Figure \ref{fig:scen2}, and the FDRs are in Figure \ref{fig:fdr}.

Both simulation scenarios tell a consistent story: the thresholding method results in more discoveries, both true and false, than the credible interval method. The findings of Figures \ref{fig:scen1} are as expected based on the theoretical results of Section \ref{sec:fdr}: almost none of the small and medium signals are  detected, while the large signals are nearly perfectly detected by both methods. In scenario 2, where the signals are drawn from a distribution, the thresholding method finds more of the signals. Comparing the left and right columns of Figure \ref{fig:scen2}, we see that both methods detect more of the signals when the truth is less sparse. This may be due to the behaviour of the MMLE, which is likely to be larger in the less sparse settings, leading to less shrinkage of the true signals.

The FDR of the credible interval method remains well below 0.05 in all settings (Figure \ref{fig:fdr}). In contrast, the FDR of the thresholding method exceeds 0.10 in all cases, and is much larger still when the observations are drawn from a Cauchy distribution. The FDR of the thresholding method can of course be lowered by taking a different cut-off than 0.5, but no guidelines are available at the moment, and a decrease of the FDR will come at the cost of the number of true discoveries. The credible intervals have low FDR, but fail to detect small and medium observations. We speculate that improvement might be possible by combining the information contained in the posterior mean and variance.

\begin{figure}[h!]
\begin{center}
\includegraphics[width = 0.45\textwidth]{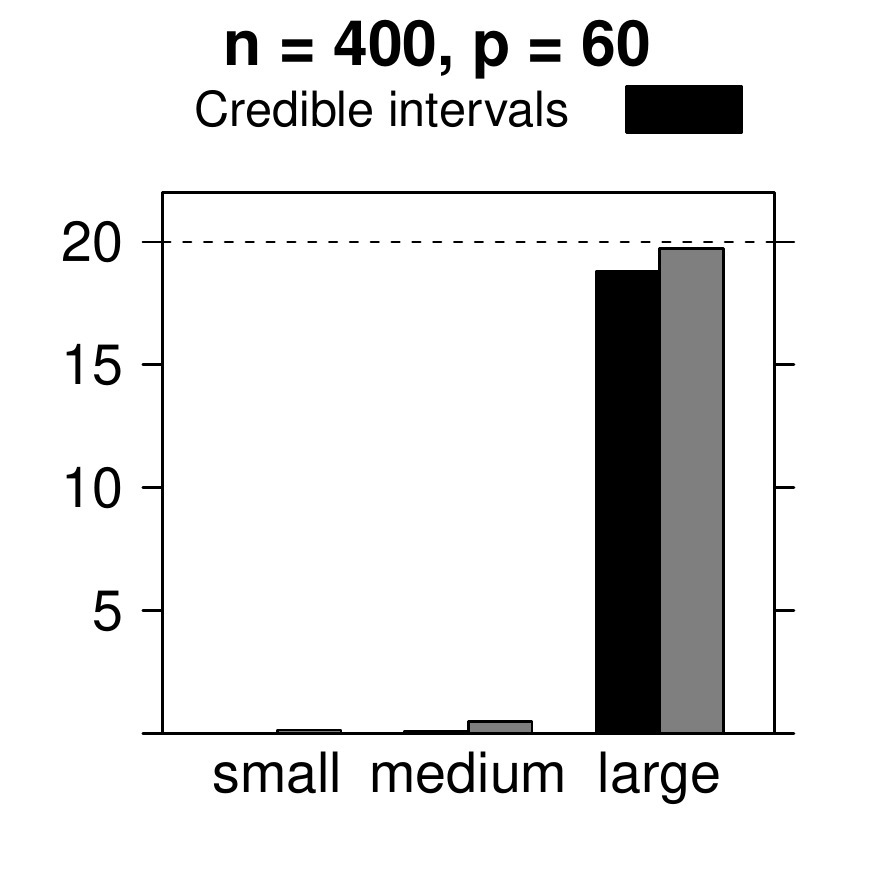} 
\includegraphics[width = 0.45\textwidth]{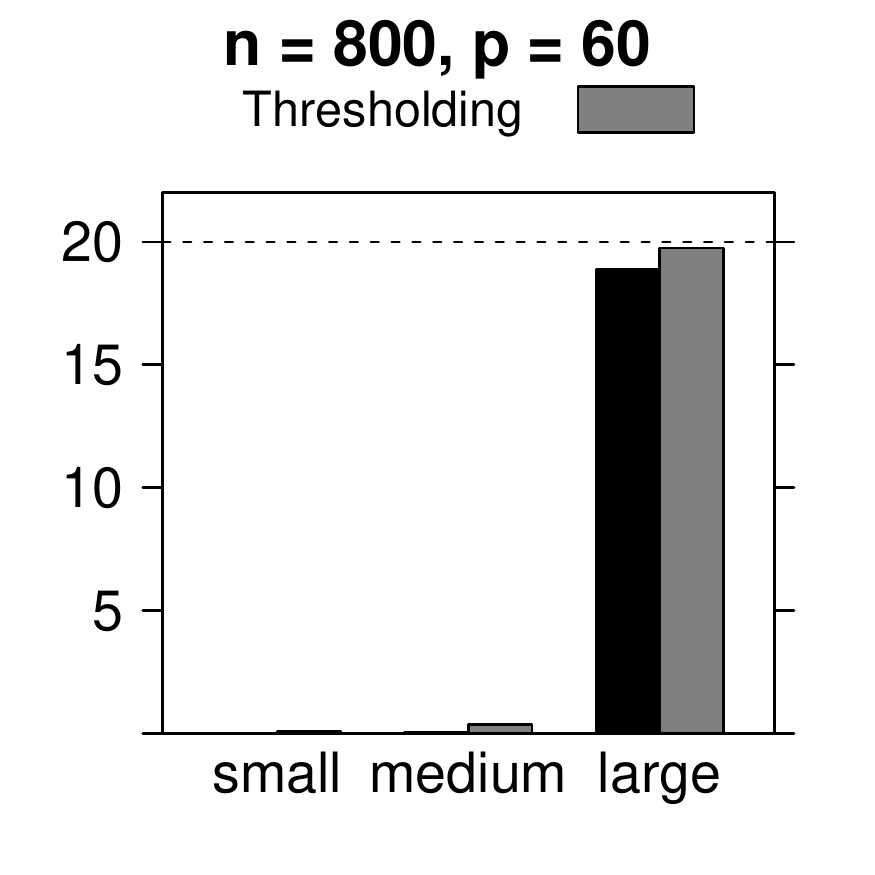} 
\includegraphics[width = 0.45\textwidth]{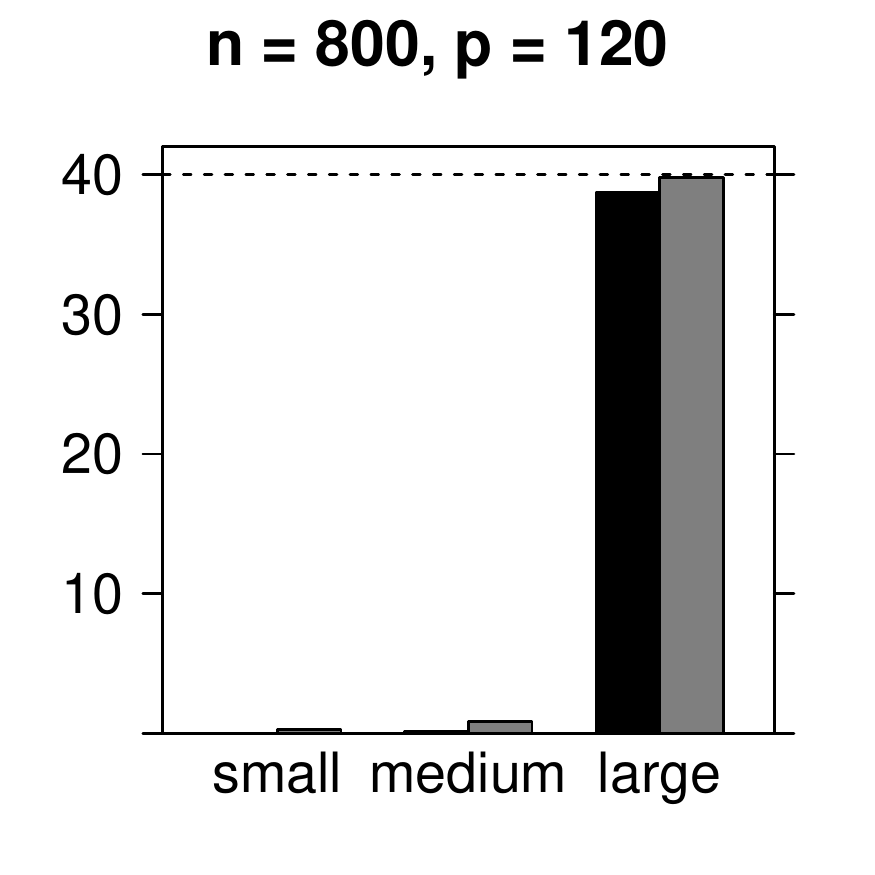} 
\includegraphics[width = 0.45\textwidth]{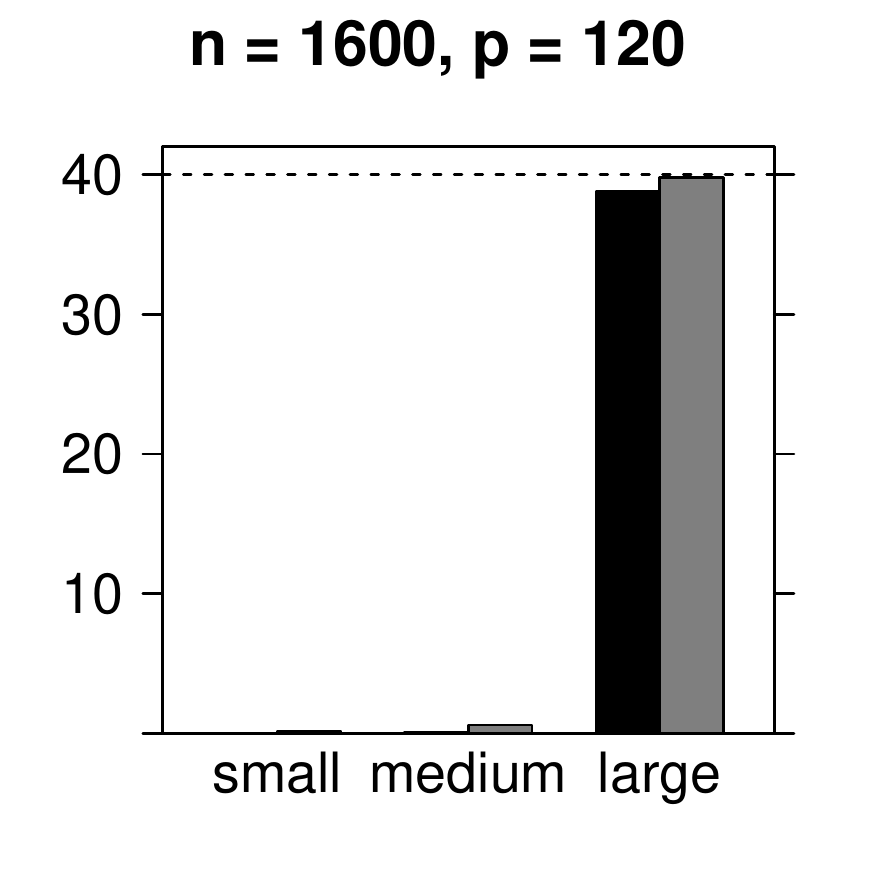} 
\caption{Number of true discoveries, split up by signal size, in scenario 1. The true number of signals in each category is indicated by the dotted line. }
\label{fig:scen1}
\end{center}
\end{figure}

\begin{figure}[h!]
\begin{center}
\includegraphics[width = 0.45\textwidth]{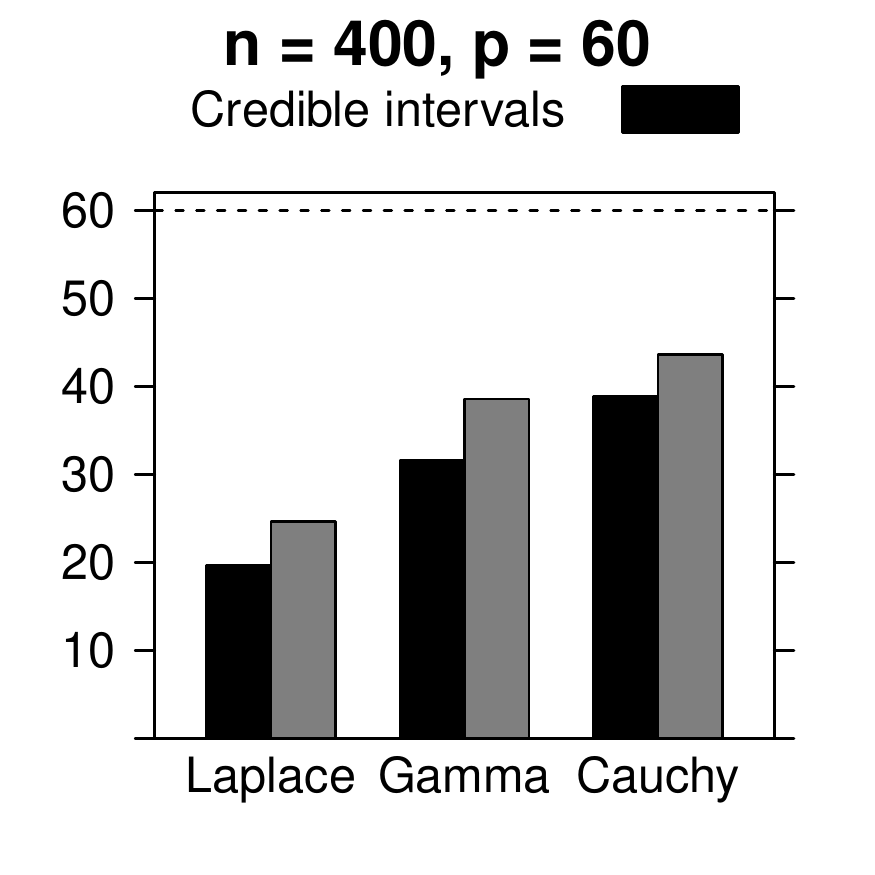} 
\includegraphics[width = 0.45\textwidth]{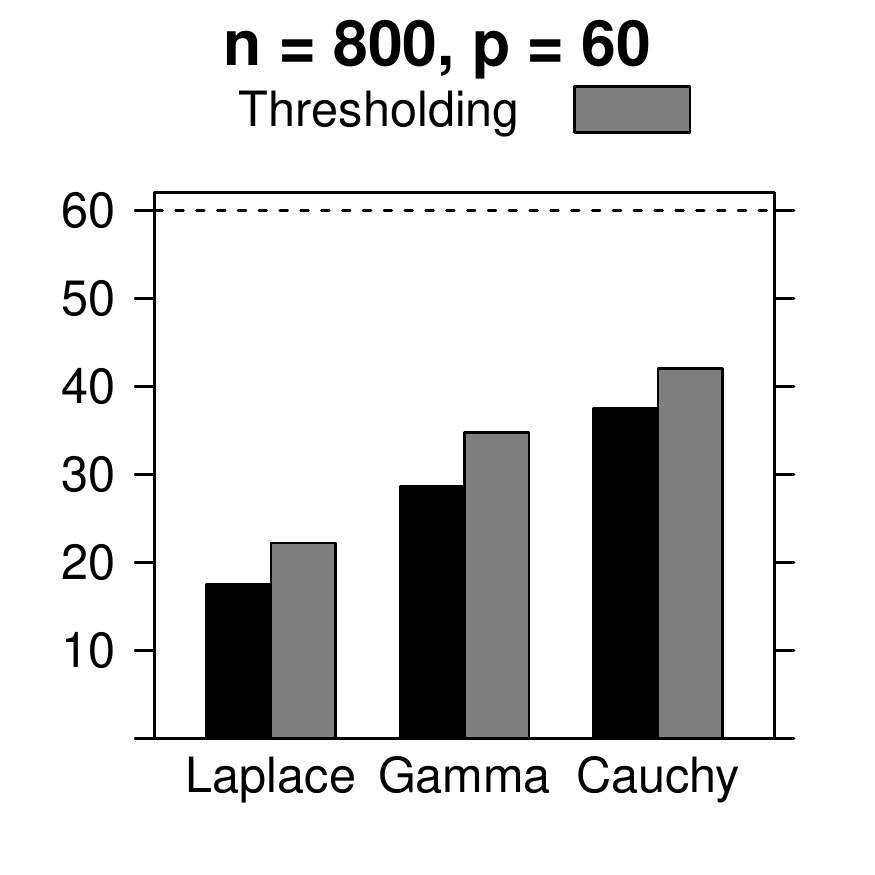} 
\includegraphics[width = 0.45\textwidth]{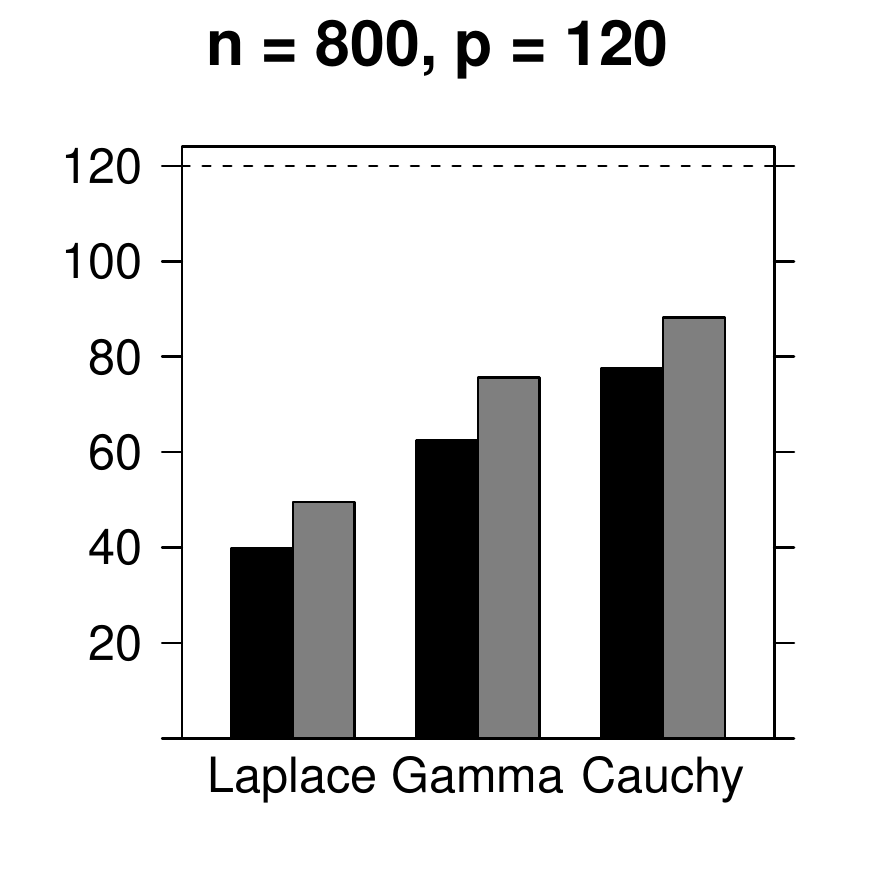} 
\includegraphics[width = 0.45\textwidth]{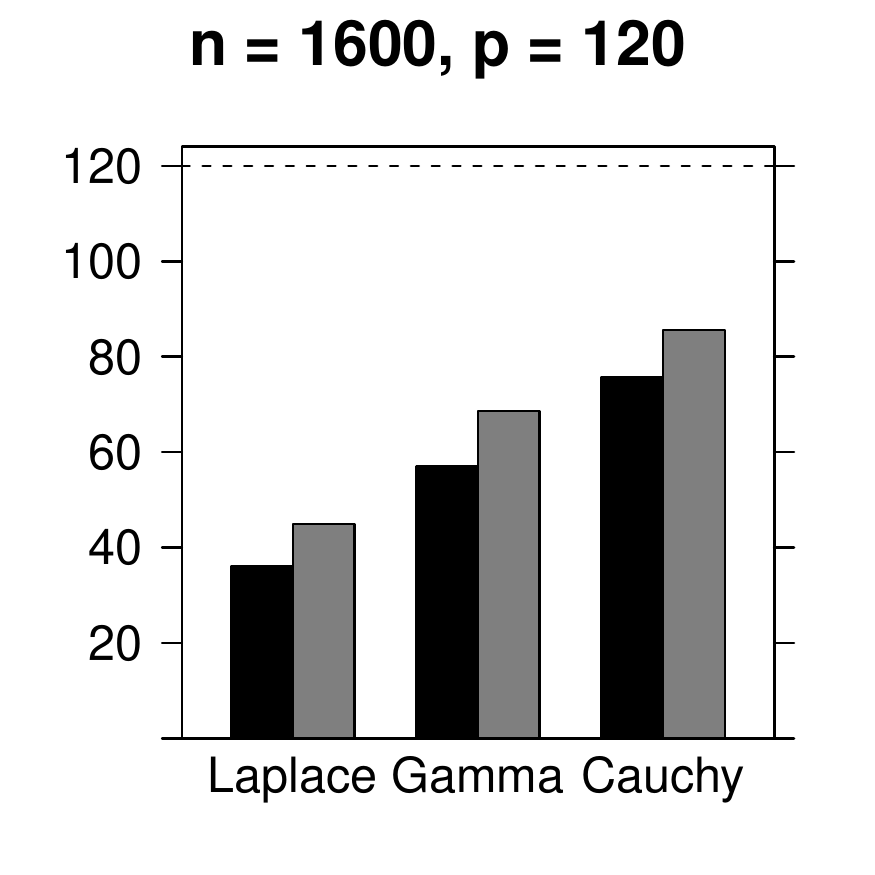} 
\caption{Number of true discoveries in scenario 2. The true number of signals in each category is indicated by the dotted line.}
\label{fig:scen2}
\end{center}
\end{figure}

\begin{figure}[h!]
\begin{center}
\includegraphics[width = 0.45\textwidth]{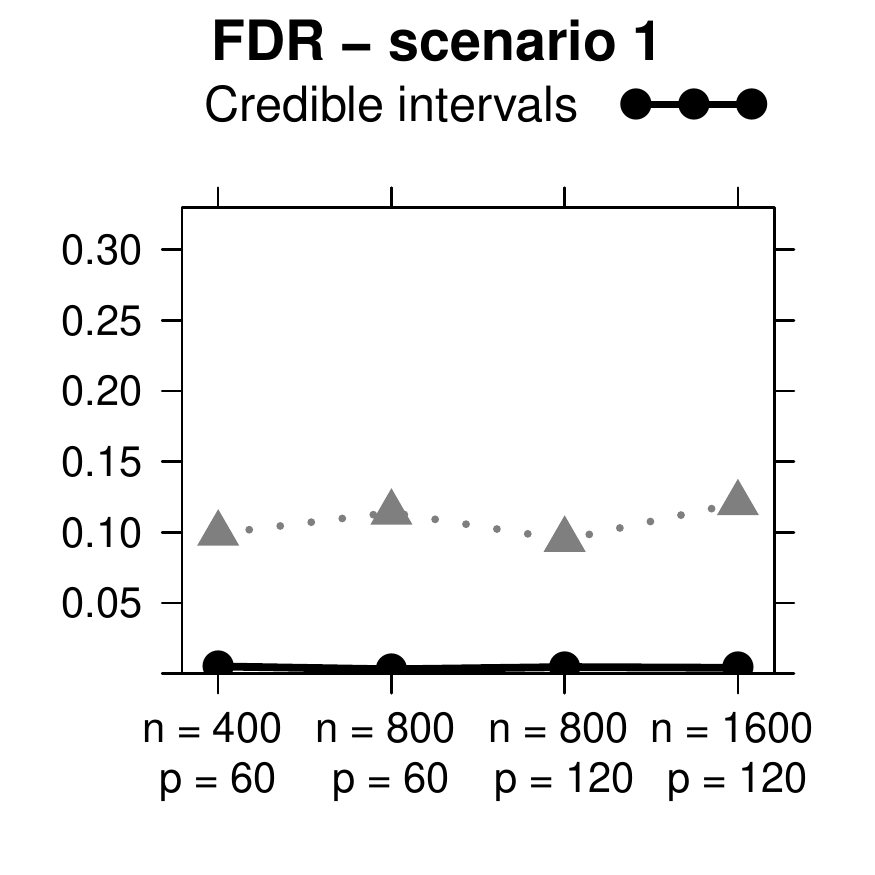} 
\includegraphics[width = 0.45\textwidth]{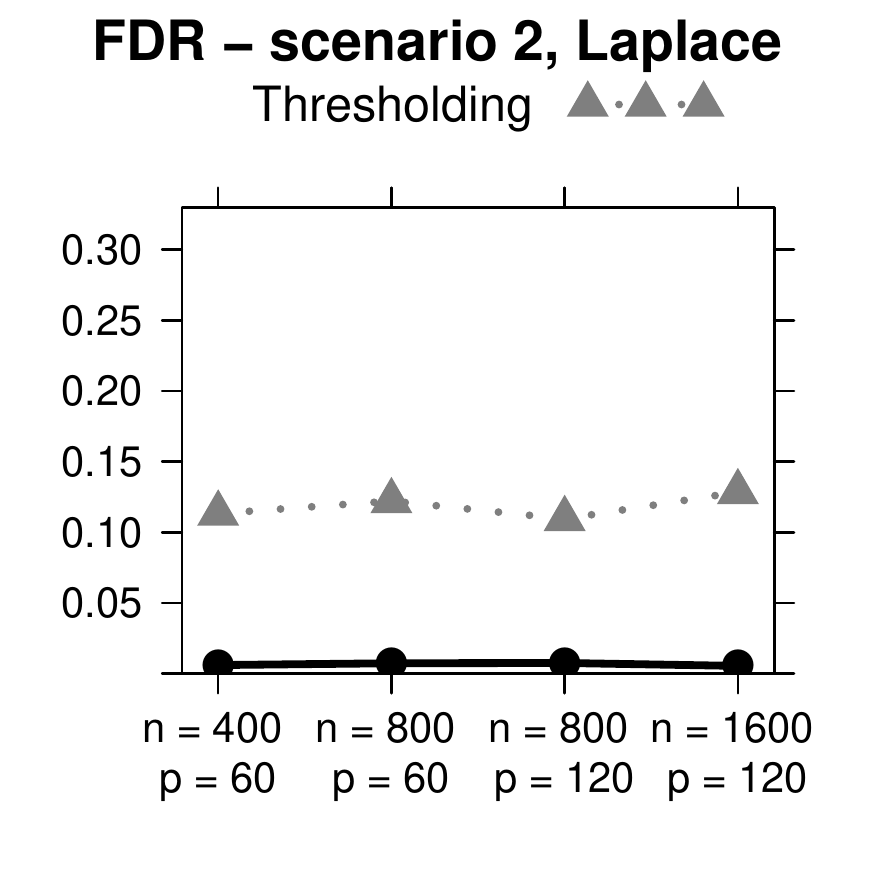} 
\includegraphics[width = 0.45\textwidth]{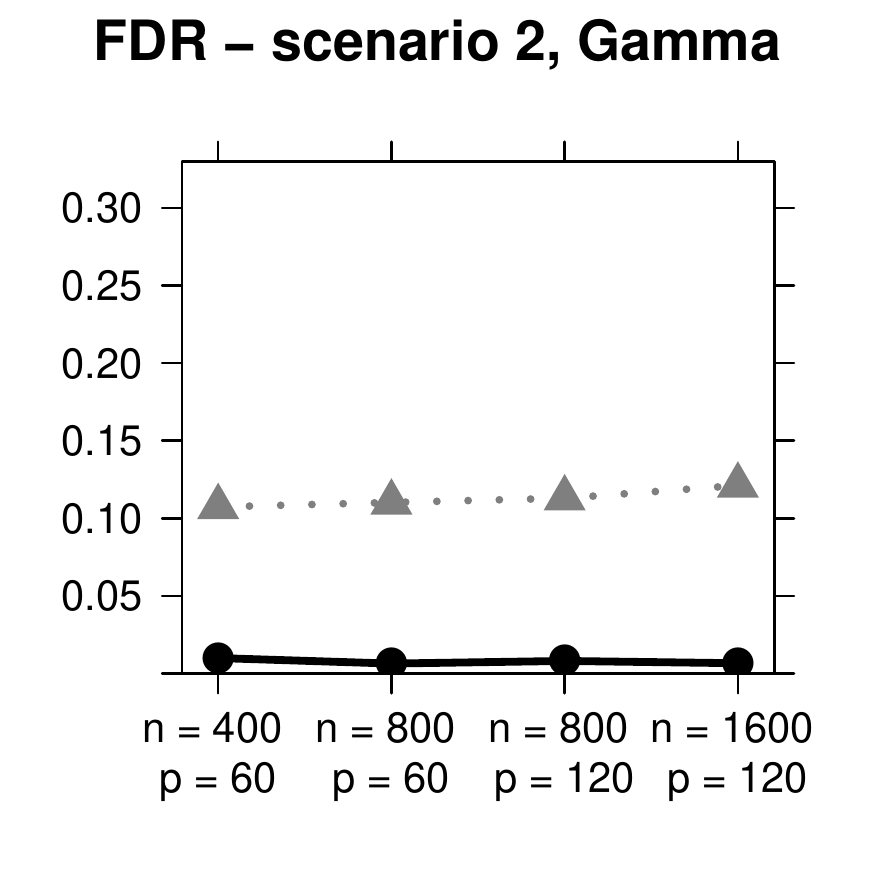} 
\includegraphics[width = 0.45\textwidth]{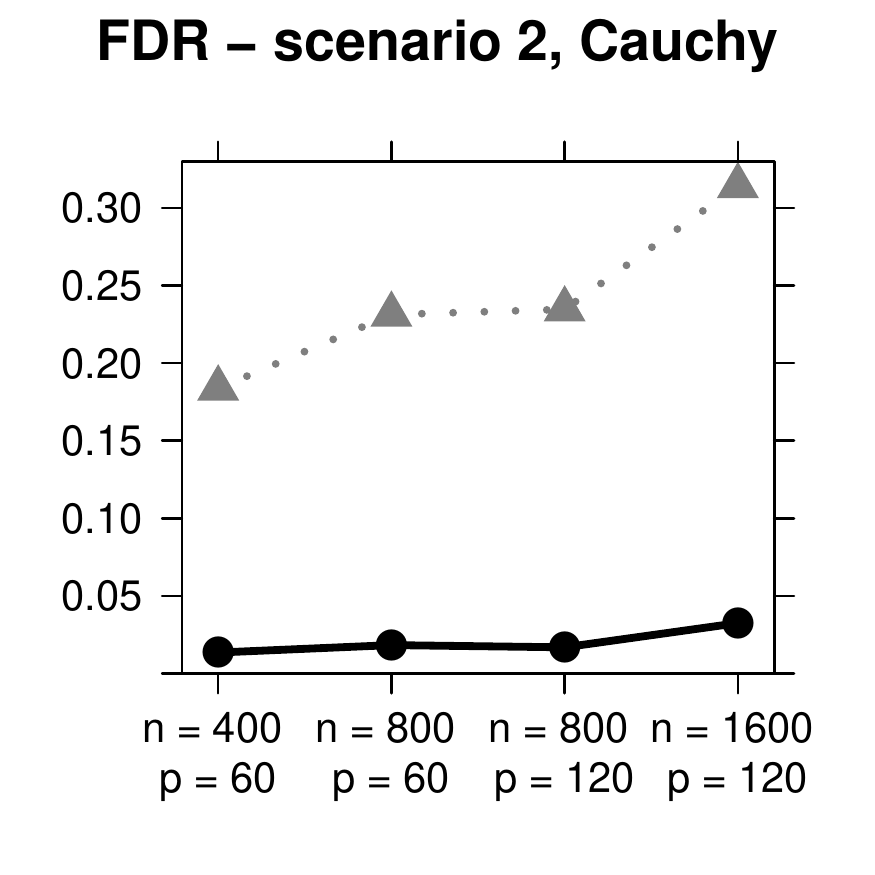} 
\caption{False discovery rate in scenarios 1 and 2.}
\label{fig:fdr}
\end{center}
\end{figure}


\appendix
\section{Notation}
For $k\in\{-1/2,1/2,3/2\}$ define a function $I_k: \RR\to\RR$ by
\begin{align}\label{eq:def.Ik}
I_k(y): =\int_0^1 z^k \frac{1}{\t^2+(1-\t^2)z}e^{y^2z/2}\,dz.
\end{align}
The Bayesian marginal density of $Y_i$, $\psi_\t$, was shown in \cite{contractionpaper} to be given by 
\begin{align}
\psi_\t(y)
&=\frac{\t}{\pi}I_{-1/2}(y)\phi(y).\label{Eqpsitau} 
\end{align}
Note that $I_{-1/2}$ depends on $\t$, but this has been suppressed  from the notation $I_k$. Set
\begin{align}
m_\t(y)=y^2\frac{I_{1/2}(y)-I_{3/2}(y)}{I_{-1/2}(y)}-\frac{I_{1/2}(y)}{I_{-1/2}(y)}.\label{def: m}
\end{align}
The posterior mean for a fixed value of $\tau$ is denoted by $\hat\theta(\tau) = (\hat\theta_1(\tau), \ldots, \hat\theta_n(\tau))$. The posterior mean for fixed $\tau$ and $\lambda$ is denoted by $\hat\theta(\tau, \lambda) = (\hat\theta_1(\tau, \lambda_1), \ldots, \hat\theta_n(\tau, \lambda_n))$, with each entry given by $\hat\th_i(\t,\l_i): = \lambda_i^2\t^2/(1+\lambda_i^2\t^2)Y_i$.

\section{Proofs for the marginal credible intervals}
\subsection{Proof of Theorem \ref{thm:marginal}}\label{sec:marginal}
The posterior distribution of $\th_i$ given $(Y_i, \t,\l_i)$ is normal with mean and variance
\begin{align*}
\hat\th_i(\t,\l_i):=&\E (\th_i\given Y_i,\t,\l_i)=\frac{\l_i^2\t^2}{1+\l_i^2\t^2}Y_i,\\
r_i^2(\t,\l_i):=&\var(\th_i\given Y_i,\t,\l_i)=\frac{\l_i^2\t^2}{1+\l_i^2\t^2}.
\end{align*}
Furthermore, the posterior distribution of $\l_i$ given $(Y_i,\t)$ possesses a density function given by 
$$\pi(\l_i\given Y_i,\t)\propto e^{-\frac{Y_i^2}{2(1+\l_i^2\t^2)}}(1+\t^2\l_i^2)^{-1/2}(1+\l_i^2)^{-1}.$$
The parameter $\th_{0,i}$ is contained in $C_{ni}(L,\t)$ if and only if 
$|\th_{0,i}-\hat\th_i(\t)|\le L \hat r_i(\a,\t)$. We show that this is true, or not, for
$\th_{0,i}$ belonging to the three regions separately for $\cS$, $\cL$ and $\cM$. 

Case $\cS$: proof of \eqref{eq: margcovS}. 
If $i\in \cS$, then $|\th_{0,i}-\hat\th_i(\t)|\le k_S\t+\t |Y_i|e^{Y_i^2/2}$, by the triangle inequality
and Lemma~\ref{LemmaBoundsPostMeanVariance}(iii).
Below we  show that $\hat r_i(\a,\t)\ge \t z_\a c$, with probability tending to one, for $z_\a$
the standard normal upper $\a$-quantile and every $c<1/2$.
Hence  $\th_{0,i}\in C_{ni}(L,\t)$ as soon as $ |Y_i|e^{Y_i^2/2}\le L z_\a c-k_S$.

For $i\in \cS$ the variable $|Y_i|$ is stochastically bounded by $|\th_{0,i}|+|\varepsilon_i|\le
k_S\t +|\varepsilon_i|$. Since the variables $|\varepsilon_i|$ are i.i.d.\ with quantile
function $u\mapsto \Phi^{-1}((u+1)/2)\le \sqrt{2\log (2/(1-u))}$, a fraction $1-\g$ of the variables $Y_i$ with $i\in \cS$
is bounded above by $k_S\t+\sqrt{2\log (2/\g)}+\d=k_S\t+\z_{\g/2}+\d$, with probability tending to 1, for any $\d>0$. 
Then the corresponding fraction of parameters $\th_{0,i}$ is contained in their credible interval if $L$ is chosen big enough that
$$Lz_\a c-k_S \ge (k_S\t+\z_{\g/2}+\d )e^{(k_S\t+\z_{\g/2}+\d)^2/2},$$
where $\eps\ra 0$ if $\g\ra 0$ and can be chosen arbitrarily small if $\d$ is chosen small and $\t\ra0$. As the right hand side of the above inequality is bounded above by $\frac 2\g\z_{\g/2}(1+\eps)$, 
this is certainly true for $L_S$ as in the theorem.

We finish by proving the lower bound for the radius $\hat r_i(\a,\t)$. 
Because the conditional distribution of $\th_i$ given $(Y_i, \t,\l_i)$ is normal with mean $\hat \th_i(\t,\l_i)$
it follows by Anderson's lemma that 
$\Pi\bigl(\th_i: |\th_i-\hat\th_i(\t)| > r \given Y_i,\t,\l_i\bigr)\ge
\Pi\bigl(\th_i: |\th_i-\hat\th_i(\t,\l_i)| >  r \given Y_i,\t,\l_i\bigr)$, for any $r>0$. Furthermore,
by the monotonicity of the variance in $\l_i$ of this conditional distribution,
the last function is increasing in $\l_i$. If $\tilde\pi(\cdot\given \t)$ is the probability density given by 
$$\tilde{\pi}(\l_i\given \t)\propto (\l_i^2\t^2+1)^{-1/2}(1+\l_i^2)^{-1},$$
then $\l_i\mapsto \pi(\l_i\given Y_t,\t)/\tilde \pi(\l_i\given \t)$ is increasing. Combining the preceding
observations with Lemma~\ref{Lem: DominatingDensity}, we see that 
\begin{align}
\alpha&=\int_0^{\infty} \Pi(\th_i: |\th_i-\hat\th_i(\t)| > \hat r_i(\a,\t) \given Y_i,\t,\l_i)\pi(\l_i \given Y_i,\t)\,d\l_i\nonumber\\
&\geq \int_0^{\infty} \Pi(\th_i: |\th_i-\hat\th_i(\t,\l_i)| > \hat r_i(\a,\t) \given Y_i,\t,\l_i) \tilde{\pi}(\l_i\given \t)\,d\l_i.\label{eq: hulp0}
\end{align}
On the other hand, since $\sd(\th_i\given Y_i,\t,\l_i)\ge \t/2(1+o(1))$, for $\l_i\ge 1/2$, the normality of the conditional
distribution of $\th_i$ given $(Y_i, \t,\l_i)$ gives that
\begin{align}
&\int_0^\infty \Pi\bigl(\th_i: |\th_i-\hat\th_i(\t,\l_i)|>z_{\a}\t/2(1+o(1))) \given Y_i,\t,\l_i) \tilde{\pi}(\l_i \given \t\bigr)\,d\l_i \label{eq: margcovS0}\\
&\qquad\geq 2\a\, \tilde{\Pi}(\l_i\ge 1/2 \given \t)\nonumber
\geq 2\a\times 2/3>\a.
\end{align}
Here the second last inequality follows from
$$\frac{\int_{0}^{1/2} (\l_i^2\t^2+1)^{-1/2}\,(1+\l_i^2)^{-1}\, d\l}
{\int_{0}^{\infty}(\l_i^2\t^2+1)^{-1/2}\,(1+\l_i^2)^{-1}\,d\l_i }
\ra \frac{\int_{0}^{1/2}(1+\l_i^2)^{-1}\,d\l_i}{\int_{0}^{\infty}(1+\l_i^2)^{-1}\,d\l_i}<\frac 13,$$
as $\t\ra0$, by two applications of the dominated convergence theorem.
Combination of \eqref{eq: hulp0} and \eqref{eq: margcovS0} shows that $\hat r_i(\a,\t)\ge z_\a\t/2(1+o(1))$.

Case $\cL$: proof of \eqref{eq: margcovL}. 
If $i\in \cL$, then 
\begin{align}
|\th_{0,i}-\hat\th_i(\t)|\le |\th_{0,i}-Y_i|+|Y_i-\hat\th_i(\t)|\le |\varepsilon_i|+2\z_\t^{-1},\label{eq: hulp01}
\end{align}
 eventually, provided
$|Y_i|\ge A\z_\t$ for some constant $A>1$, by the triangle inequality and Lemma~\ref{LemmaBoundsPostMeanVariance}(i).
Below we show that $\hat r_i(\a,\t)\ge z_\a+o(1)$, with probability tending to one.
It then follows that $\th_{0,i}\in C_{ni}(L,\t)$ as soon as $ |Y_i|\ge A\z_\t$ and $|\varepsilon_i|\le L z_\a+o(1)-2\z_\t^{-1}=L z_\a+o(1)$.

For $i\in \cL$ the variable $|Y_i|$ is lower bounded by $|\th_{0,i}|-|\varepsilon_i|\ge k_L\z_\t-|\varepsilon_i|$ and hence
 $|Y_i|\ge A\z_\t$ if $|\varepsilon_i|\le (k_L-A)\z_\t$. This is automatically satisfied if $|\varepsilon_i|\le L z_\a+o(1)$, for constants
$L$ with $L\ll \z_\t$. As for the proof of Case $\cS$ we have that 
$|\varepsilon_i|\le L z_\a+o(1)$ with probability tending to one for a fraction $1-\g$ of the indices $i\in \cS$
if $L\ge z_\a^{-1}\z_{\g/2}+\d$, for some $\d>0$. 

The proof that $\hat r_i(\a,\t)\ge z_\a+o(1)$ follows the same lines as the proof of the corresponding result
in Case $\cS$, expressed in \eqref{eq: hulp0} and \eqref{eq: margcovS0}, but with the true density $\pi$ instead of $\tilde \pi$.
Inequality \eqref{eq: hulp0} with $\pi$ instead of $\tilde\pi$ is valid by Anderson's lemma, while in
 \eqref{eq: margcovS0} we replace $z_\a\t/2(1+o(1))$ by $z_\a+o(1)$.
Since $\var(\th_i\given Y_i,\t,\l_i)\ge g_\t/(1+g_\t)=1+o(1)$ for every $\l_i\ge g_\t/\t$ and $g_\t\ra\infty$, the desired
result follows if $\Pi\bigl(\l_i\ge g_\t/\t\given Y_i,\t\bigr)$ is eventually bigger than $2/3$, for every $i$ such that
$|Y_i|\ge A\z_\t$. Now by the form of $\pi(\l_i\given Y_i,\t)$, for any $c, d>0$,
\begin{align*}
\Pi(\l_{i}\leq g_\t/\t \given Y_i,\t)
&\le\frac{e^{-\frac{Y_i^2}{2(1+c^2)}}\int_{0}^{c/\t} (1+\l^2)^{-1}\,d\l+ e^{-\frac{Y_i^2}{2(1+g_\t^2)}}\int_{c/\t}^{g_\t/\t} (1+c^2/\t^2)^{-1}\,d\l}
{e^{-\frac{Y_i^2}{2(1+d^2g_\t^2)}}\int_{d g_\t/\t}^{2d g_\t/\t} (1+4d^2g_\t^2)^{-1/2}(1+4d^2g_\t^2/\t^2)^{-1}\,d\l}\\
&\lesssim \frac{\exp\Bigl[-\frac{Y_i^2}{2}\Bigl(\frac{1}{1+c^2}-\frac1{1+d^2g_\t^2}\Bigr)\Bigr] 
+\exp\Bigl[-\frac{Y_i^2}{2}\Bigl(\frac{1}{1+g_\t^2}-\frac1{1+d^2g_\t^2}\Bigr)\Bigr] g_\t \t}
{(g_\t/\t) (1/g_\t) (\t^2/g_\t^2)}.
\end{align*}
For $|Y_i|>A\z_\t$ and $A>1$ we can choose $c$ sufficiently close to zero so that the 
first exponential is of order $\t^{A'}$ for some $A'>1$. Then it is much smaller than the denominator, which is
of order $\t/g_\t^{2}$, provided $g_\t$ tends to infinity slowly.
If we choose $d>1$, then the term involving the second exponential will also tend to zero for $|Y_i|>A\z_\t$
as soon as $e^{-c\z_\t^2/g_\t^2}g_\t^3\ra 0$, for a sufficiently small constant $c$.
This is  true (for any $c>0$) for instance if $g_\t= \sqrt{\z_\t}$. Then the quotient tends to zero,
and is certainly smaller than $1/3$.

Case $\cM$: proof of \eqref{eq: margcovM}.
We show below that $\hat r_i(\a,\t)\lesssim U_\t:=\t(1\vee |Y_i|e^{Y_i^2/2})$, with probability tending to one,
whenever $i\in \cM$. By Lemma~\ref{LemmaBoundsPostMeanVariance}(iii) exactly the same
bound is valid for $|\hat\th_i(\t)|$. If $|\hat\th_i(\t)|+\hat r_i(\a,\t)\lesssim U_\t$, but $|\th_{0,i}|\gg U_\t$
then $\th_{0,i}\notin C_{ni}(L,\t)$ eventually, and hence it suffices to 
prove that the probability of the event that $|\th_{0,i}|\gg U_\t$ tends to one whenever $i\in \cM$. Consider two cases.
If $|\th_{0,i}|\leq 1$, then $|Y_i|\le 1+|\varepsilon_i|=O_P(1)$ and hence $U_\t=O_P(\t)$. For $i\in \cM$, we have
$|\th_{0,i}|\gg \t$ and hence $|\th_{0,i}|\gg U_\t$ with probability tending to one.
On the other hand, if $|\th_{0,i}|\ge 1$ but $|\th_{0,i}|\le k_M\z_\t$, then $|Y_i|\le k\z_\t$ with probability tending
to one for any $k>k_M$, and hence $U_\t\lesssim \t\z_\t e^{k^2\z_\t^2/2}=\t^{1-k^2}\z_\t$. Since $k_M<1$ we can choose $k<1$, so that 
$\t^{1-k^2}\z_\t \ra0$, and again we have $|\th_{0,i}|\gg U_\t$ with probability tending to one. 

We finish by proving that $\hat r_i(\a,\t)\lesssim U_\t$, with probability tending to one.
As a first step we show that, for $k<1$,
\begin{align}
\lim_{M\ra\infty}\sup_{|y|\le k\z_\t}\Pi(\l_i\geq M \given Y_i=y,\t)\ra 0.\label{eq: hyper_post_mass}
\end{align}
By the explicit form of the posterior density of $\l_i$ we have
\begin{align*}
\Pi(\l_i\geq M \given Y_i=y,\t)
&\leq \frac{\int_{M}^{\infty} e^{-\frac{y^2}{2(1+\l_i^2\t^2)}}(1+\l_i^2\t^2)^{-1/2}(1+\l_i^2)^{-1}\,d\l_i}
{\int_{1}^2e^{-\frac{y^2}{2(1+\l_i^2\t^2)}}(1+\l_i^2\t^2)^{-1/2}(1+\l_i^2)^{-1}\,d\l_i}\\
&\le e^{y^2/2}5\sqrt 2\int_{M}^{\infty} e^{-\frac{y^2}{2(1+\l_i^2\t^2)}} (1+\l_i^2\t^2)^{-1/2}(1+\l_i^2)^{-1}\,d\l_i.
\end{align*}
We split the remaining integral over the intervals $[M,\t^{-a})$ and $[\t^{-a},\infty)$, for some $a<1$. 
On the first interval we use that $y^2/(1+\l_i^2\t^2)=y^2+o(1)$, uniformly in $|y|\lesssim \z_\t$ and $\l_i\le \t^{-a}$,
while on the second we simply bound the factor $e^{-{y^2}/{(2(1+\l_i^2\t^2))}}$ by 1, to see that the preceding
display is bounded above by 
$$e^{y^2/2}5\sqrt 2 \Bigl[e^{-y^2/2}e^{o(1)}\int_{M}^{\t^{-a}} (1+\l_i^2)^{-1}\,d\l_i+ \int_{\t^{-a}}^\infty (1+\l_i^2)^{-1}\,d\l_i\Bigr].$$
The first term in square brackets (times the leading term) contributes less than a multiple of $\int_M^\infty\l^{-2}\,d\l=1/M$, while the
second term contributes less than $e^{y^2/2}\t^a\le \t^{-k^2+a}$, for $|y|\le k\z_\t$, which tends to zero if $a>k^2$. 
This concludes the proof of \eqref{eq: hyper_post_mass}.

By the reverse triangle inequality, for any $M>0$,
\begin{align*}
&\int_0^{\infty}\Pi(\th_i:|\th_i-\hat\th_i(\t)|\geq r+|\hat\th_i(\t,\l_i)-\hat\th_i(\t)|\ \given Y_i,\l_i,\t)\pi(\l_i \given Y_i,\t)\,d\l_i\\
&\quad\leq \int_0^{M}\Pi(\th_i: |\th_i-\hat\th_i(\t,\l_i)|\ge r\given Y_i,\l_i,\t)\pi(\l_i \given Y_i,\t)\,d\l_i+\Pi(\l_i\ge M \given Y_i,\t).
\end{align*}
For sufficiently large $M$ the second term on the far right is smaller than $\a/2$ by the preceding
paragraph and for $r= z_{\a/4}\sup_{\l\le M}r_i(\t, \l)$ the first term on the right is smaller than $\a/2$ as well,
by the normality of $\th_i$ given $(Y_i,\l_i,\t)$ and the definition of $r_i(\t, \l_i)$. The inequality remains valid if
$|\hat\th_i(\t,\l_i)-\hat\th_i(\t)|$ in the first line is replaced by $\sup_{\l\le M}|\hat\th_i(\t,\l_i)|+|\hat\th_i(\t)|$. It follows that
$$\hat r_i(\a,\t)\le z_{\a/4}\sup_{\l\le M}r_i(\t, \l) +\sup_{\l\le M}|\hat\th_i(\t,\l_i)|+|\hat\th_i(\t)|.$$
The first term is bounded above by $M\t$,  and the second by $M\t |Y_i|$, by the definitions
of $r_i(\t,\l)$ and $\hat\th_i(\t,\l)$, while $|\hat\th_i(\t)|\le\t |Y_i|e^{Y_i^2/2}$, by 
Lemma~\ref{LemmaBoundsPostMeanVariance}(iii). This concludes the proof that
$\hat r_i(\a,\t)\lesssim U_\t$.

\subsection{Proof of Theorem \ref{thm:marginal_adapt}}\label{sec:marginal_adapt}
The proof for the empirical Bayes procedure closely follows the proof of Theorem~\ref{thm:marginal}.
The lower bounds $\hat r_i(\a,\t)\ge \t z_\a (1+o(1))$ and $\hat r_i(\a,\t)\ge z_\a+o(1)$ in the cases $\cS$ and $\cL$,
and the upper bound $\hat r_i(\a,\t)\le \t(1\vee |Y_i|e^{Y_i^2/2})$ in case $M$, with probability tending to one,
remain valid when $\t$ is replaced by $\hat\t_n$. The remainders of the arguments then go
through with minor changes, where it is used that $\hat\t_n\ge 1/n$,  $\z_{\hat\t_n}\le \sqrt{2\log n}$ and $\hat\t_n\le \t_n(p)$
with probability tending to one by Condition~\ref{cond.eb}. Note the slightly changed right boundary of the set
$\cS_a$ and left boundary of the set $\cL_a$, which refer to ``extreme'' cases.

In the proof for the hierarchical Bayes  method, we denote by
$\hat{\th}_i$ the $i$th coordinate of the hierarchical posterior mean $\hat \th$ and by $\hat{r}_i(\a)$ the (Bayesian) 
radius of the marginal hierarchical Bayes credible interval. Hence $\th_{0,i}$ is contained in this credible interval if
$|\th_{0,i}-\hat\th_i|\le L \hat r_i(\a)$.

By Lemma~\ref{lem:HBhyper} we have that $\Pi(1/n<\t<5 t_n \given Y^n)\ra 1$ under Condition~\ref{cond.hyper.1}, 
or $\Pi(1/n<\t< (\log n)t_n \given Y^n)\ra 1$ under the weaker Condition~\ref{cond.hyper.1a}.  

Case $\cS_a$: proof of the hierarchical Bayes version of \eqref{eq: margcovS_a}. 
For $i\in \cS_a$ we have $|Y_i|\le k_S/n+|\varepsilon_i|$.
Because the $1-\g$-quantile of the absolute errors $|\varepsilon_i|$ is bounded above by
$\z_{\g/2}$, the set $\cS_\g$ of coordinates $i\in \cS_a$ such that $|Y_i|\le \z_{\g/2}+\d$ contains at least a fraction
$1-\g$ of the elements of $\cS_a$, with probability tending to one. We show below that 
with probability tending to one
both $\hat r_i(\a)\ge c|\hat\th_i|z_{\a/2}\z_{\g/2}$ and $\hat r_i(\a)\ge z_\a/(2n)$ for $i\in \cS_\g$, and any $c<1/2$. 
Then $|\hat\th_i-\th_{0,i}|\leq |\hat\th_i|+k_S/n\leq [(cz_{\a/2}\z_{\g/2})^{-1}+(2/z_\a) k_S]\hat r_i(\a)$,
and hence $\th_{0,i}$ is contained in its credible interval for every $i\in \cS_\g$ if $L\ge (cz_{\a/2}\z_{\g/2})^{-1}+(2/z_\a) k_S$.

To show that $\hat r_i(\a)\ge c|\hat\th_i|z_{\a/2}\z_{\g/2}$ for $i\in \cS_\g$, we assume $Y_i>0$ for simplicity.
Then $\hat\th_i(\t,\l_i)>0$ for every $(\t,\l_i)$ and hence so is $\hat\th_i$.
By its definition $\hat\th_i(\t,\l_i)=r_i^2(\t,\l_i)Y_i$. Since $r_i(\t,\l_i)\le 1$, it follows that 
$\hat\th_i(\t,\l_i)\le r_i(\t,\l_i) (\z_{\g/2}+\d)$, for every $i\in \cS_\g$. If $\hat\th_i(\t,\l_i)\ge \hat\th_i/2$, then
$r_i(\t,\l_i)\ge \hat\t_i (2\z_{\g/2}+2\d)$ and
we can conclude, using Anderson's lemma and the conditional normal distribution of $\th_i$ given
$(Y_i,\l_i,\t)$ with variance $r_i^2(\t,\l_i)$, that $\Pi\bigl(\th_i: |\th_i-\hat\th_i|\ge z_{\a/2}\hat\th_i/(2 \z_{\g/2}+2\d) \given Y_i,\t, \l_i\bigr)\ge \a$.
If $\hat\th_i(\t,\l_i)\le \hat\th_i/2$, then $\th_i\le \hat\th_i(\t,\l_i)$ implies that $|\th_i-\hat\th_i|\ge \hat\th_i/2$,
and hence  $\Pi\bigl(\th_i: |\th_i-\hat\th_i|\ge \hat\th_i/2 \given Y_i,\t, \l_i\bigr)\ge 
\Pi\bigl(\th_i: \th_i\le \hat\th_i(\t,\l_i) \given Y_i,\t, \l_i\bigr)= 1/2$, since $ \hat\th_i(\t,\l_i)$ is the median
of the conditional normal distribution of $\th_i$. For $c_0=(1/2)\wedge (z_{\a/2}/(2\z_{\g/2}+2\d))$ and $\a\le 1/2$,
we have that
$\Pi\bigl(\th_i: |\th_i-\hat\th_i|\ge c\hat\th_i\bigr)\ge \a$ in both cases, and hence
\begin{align*}
& \Pi\bigl(\th_i: |\th_i-\hat\th_i|\geq c_0\hat\th_i \given Y^n\bigr)\\
&\quad\int_{1/n}^{1}\int_0^{\infty} \Pi\bigl(\th_i: |\th_i-\hat\th_i|\geq c_0\hat\th_i \given Y_i,\t,\l_i\bigr) \pi(\l_i \given Y_i,\t) \pi(\t \given Y^n)\,d\l_i\, d\t
\geq \a.
\end{align*}
Thus $\hat r_i(\a)\ge c_0\hat\th_i$ by the definition of $\hat r_i(\a)$. 

For the proof that $\hat r_i(\a)\geq z_{\a}/(2n)$, we first note that, similarly to \eqref{eq: hulp0},
\begin{align*}
\a&=\Pi\bigl(\th_i: |\th_i-\hat\th_i|\geq \hat r_i(\a) \given Y^n\bigr)\\
&\ge \int_{1/n}^{1}\int_{0}^{\infty}
\Pi\bigl(\th_i: |\th_i-\hat\th_i(\t,\l_i)|\geq \hat r_i(\a) \given Y_i,\t,\l_i)\pi(\l_i \given Y_i,\t\bigr)\pi(\t \given Y^n)\,d\l_i\, d\t
\end{align*}
On the other hand, since $r_i(\t,\l_i)\geq 1/(2n)(1+o(1))$, whenever  $\t\in[1/n,5t_n]$ and $\l_i>1/2$,
we have similarly to \eqref{eq: margcovS0},
\begin{align*}
&\int_{1/n}^{1}\int_{0}^{\infty}\Pi\bigl(\th_i: |\th_i-\hat\th_i|\geq z_{\a}/(2n) \given Y_i,\t,\l_i\bigr)\pi(\l_i \given Y_i,\t)\pi(\t \given Y^n)\,d\l_i\, d\t\\
&\quad\geq \int_{1/n}^{5t_n}\int_{1/2}^{\infty}
\Pi\bigl(\th_i: |\th_i-\hat\th_i|\geq z_{\a} r_i(\t,\l_i) \given Y_i,\t,\l_i\bigr)\pi(\l_i \given Y_i,\t)\pi(\t \given Y^n)\,d\l_i\, d\t\\
&\quad\ge \int_{1/n}^{5t_n} (4\a/3) \pi(\t \given Y^n)d\t>\a,
\end{align*}
where the lower bound $4\a/3$ follows as in \eqref{eq: margcovS0}.
Together the two preceding displays imply that $\hat r_i(\a)\geq z_{\a}/(2n)$.

Case $\cL_a$: proof of the hierarchical Bayes version of \eqref{eq: margcovL_a}. 
If $i\in \cL_a$, then $|\th_{0,i}|\ge k_L\sqrt{2\log n}=k_L\z_{1/n}$ and hence 
$|Y_i|\ge k_L\z_{1/n}-|\varepsilon_i|$. The subset $\cL_\g$ of $i$ with $|\varepsilon_i|\le \z_{\g/2}+\d$ contains a fraction of at least $1-\g$ of the
elements of $\cL_a$ eventually with probability tending to one, and $|Y_i|\ge k \z_{1/n}$ for every $i\in \cL_\g$ and some constant $k>1$. 
Then $|Y_i-\th_i(\t)|\lesssim \z_\t^{-1}$ for $\t\ra0$ and 
$|Y_i-\th_i(\t)|\lesssim (\log \z_{1/n})/\z_{1/n}$ for $\t$ bounded away from zero, by Lemma~\ref{LemmaBoundsPostMeanVariance} (i) and (vii), respectively,
and hence $|Y_i-\hat\th_i|$ tends to zero, by Jensen's inequality. It follows that
$|\th_{0,i}-\hat \th_i|\le |\th_{0,i}-Y_i|+|Y_i-\hat\th_i|\le \z_{\g/2}+\d'$ for ever $i\in \cL_\g$ with probability tending to one.
We can prove that $\hat r_i(\a)\geq z_{\a}(1+o(1))$ similarly as in the proof for Case $\cL$
in the proof of Theorem~\ref{thm:marginal} (adapted similarly as in the proof for case $\cS_a$), but now using that 
$r_i(\t,\l_i)\geq 1+o(1)$, whenever $\t\in[1/n,5t_n]$ and $\l_i\geq g_\t/\t$, for some $g_\t\ra\infty$.
Thus $|\th_{0,i}-\hat \th_i|\le L\hat r_i(\a)$ with probability tending to one, if $Lz_\a \ge \z_{\g/2}+\d'$.

Case $\cM_a$: proof of the hierarchical Bayes version of  \eqref{eq: margcovM_a}. 
First assume that Condition~\ref{cond.hyper.1} holds, so that
$\Pi(\t\le 5 t_n\given Y^n)\ra 1$ in probability, by Lemma~\ref{lem:HBhyper}, and in fact
$\Pi(\t\le 5 t_n\given Y^n)\le e^{-c_0 p_n}$, for some $c_0>0$ by the proof of the lemma.
Since $i\in \cM_a$ we have that $|Y_i|\le |\th_{0,i}|+|\varepsilon_i|\le k\z_{\t_n}$,
with probability tending to one and some $k<1$. We show below that
both $\hat{r}_{i}(\a)$ and $|\hat\th_i|$ are bounded above by $t_n (1\vee |Y_i|) e^{Y_i^2/2}$, with probability tending to one.
The argument as in the proof Theorem~\ref{thm:marginal}, split in the cases that $|\th_{0,i}|$ is smaller or bigger than $1$,
then goes through and shows that $\th_{0,i}$ is not contained in the credible interval, with probability tending to one.

By the triangle inequality, for any $r>0$,
$$\Pi\bigl(\th_i:|\th_i-\hat\th_i|\geq r +|\hat\th_i(\t,\l_i)-\hat\th_i|\given Y_i,\l_i,\t\bigr)
\leq \Pi\bigl(\th_i: |\th_i-\hat\th_i(\t,\l_i)|\geq r\given Y_i,\l_i,\t\bigr).$$
For $r\ge z_{\a/4}r_i(\t,\l_i)$ the right side is at most $\a/2$. For given $M$ define
\begin{align*}
r_i:=z_{\a/4}\sup_{\substack{\t\in[1/n,5t_n]\\ \l_i\leq M}}r_i(\t,\l_i)+\sup_{\substack{\t\in[1/n,5t_n]\\ \l_i\leq M}}|\hat\th_i(\t,\l_i)|+|\hat\th_i|.
\end{align*}
Then it follows that 
\begin{align*}
&\int_{1/n}^1\int_0^{\infty}\Pi(\th_i: |\th_i-\hat\th_i|\geq r_i \given Y_i,\l_i,\t)\pi(\l_i \given Y_i,\t)\pi(\t \given Y^n)\,d\l_i\, d\t\\
&\quad\leq \a/2 +\int_{1/n}^{5t_n}\int_{M}^{\infty}\pi(\l_i \given \t,Y_i)\pi(\t \given Y^n)\,d\l_i\, d\t + \int_{5t_n}^{1}\pi(\t \given Y^n)\,d\t.
\end{align*}
By \eqref{eq: hyper_post_mass} the second term on the right can be made arbitrarily small by choosing large $M$, and the third term 
tends to zero by Lemma~\ref{lem:HBhyper}.
We conclude that the left side is then smaller than $\a$ which implies that $\hat r_i(\a)\le r_i$. 
Now by the definitions of $r_i(\t,\l_i)$ and $\hat\th_i(\t,\l_i)$ the suprema in 
the definition of $r_i$ are bounded by $z_{\a/4}M 5t_n$ and $M5t_n |Y_i|$, respectively. Furthermore,
by Lemma~\ref{LemmaBoundsPostMeanVariance} (iii) and (ii),
\begin{align*}
|\hat\th_i|&\leq \int_{1/n}^{1} |\hat\th_i(\t)| \pi(\t  \given  Y^n)\,d\t
\lesssim t_n|Y_i|e^{Y_i^2/2}+ |Y_i|\Pi(\t\geq 5t_n \given Y^n)\lesssim t_n |Y_i|e^{Y_i^2/2},
\end{align*}
since $\Pi(\t\geq 5t_n \given Y^n)\lesssim e^{-c_0 p_n}\ll t_n$ if $p_n\gtrsim \log n$.

If the weaker Condition~\ref{cond.hyper.1a} is substituted for Condition~\ref{cond.hyper.1}, then
in the preceding we must replace $t_n$ by $(\log n) t_n$. The arguments go through, but 
with an additional $\log n$ factor in the upper bound on the radius $\hat r_i(\a)$. This is compensated by
the stronger assumption $f_n\gg \log n$ on the lower bound of $\cM_a$.

\subsection{Technical Lemmas}
The next lemma extends Lemma~\ref{lem: LBradius} to nondeterministic values of $\t$.

\begin{lemma}\label{lem: LBradiusAdapt}
If $n\t/\z_\t\rightarrow\infty$, then for every constant $C>0$ there exists a constant $D>0$ such that
$$P_{\th_0}\Bigl(  \inf_{t\in [C^{-1}\t,C\t]} \hat r(\a,t)\ge D\sqrt{n\t\z_\t} \Bigr)\rightarrow 1.$$
\end{lemma}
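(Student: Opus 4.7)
The plan is to lift the proof of Lemma~\ref{lem: LBradius} from a pointwise estimate at $\t$ to an estimate that is uniform over $t\in[C^{-1}\t,C\t]$. On this compact interval one has $nt\z_t\asymp n\t\z_\t$ with constants depending only on $C$, so the target order is unchanged and the hypothesis $nt/\z_t\to\infty$ holds uniformly. The argument has three pieces: (a) a deterministic reduction of the credible radius to a posterior-variance functional; (b) a pointwise probabilistic lower bound for that functional; (c) a uniformity upgrade in $t$.

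First I would revisit the proof of Lemma~\ref{lem: LBradius}, which I expect to lower bound $\hat r(\a,t)^2$ by a constant multiple of the total posterior variance $V_n(t):=\sum_{i=1}^n\Var(\th_i\given Y_i,t)$. Given the data, $\|\th-\hat\th(t)\|_2^2$ is a sum of independent nonnegative random variables with mean $V_n(t)$, and a Paley--Zygmund / Chebyshev-type argument produces $\Pi\bigl(\|\th-\hat\th(t)\|_2^2\le c V_n(t)\given Y^n,t\bigr)<1-\a$ for some $c=c(\a)>0$. This implication is purely conditional on $Y^n$ and deterministic in $t$, so it transfers verbatim to every $t\in[C^{-1}\t,C\t]$, yielding $\hat r(\a,t)^2\ge c V_n(t)$ simultaneously.

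Second I would establish $V_n(t)\gtrsim n\t\z_\t$ uniformly in $t\in[C^{-1}\t,C\t]$ with $P_{\th_0}$-probability tending to one. Using the representations \eqref{eq:def.Ik}--\eqref{def: m} for the horseshoe, one obtains $\E_{\th_0}\Var(\th_i\given Y_i,t)\gtrsim t\z_t$ whenever $\th_{0,i}=0$, and there are at least $n-p_n\sim n$ such coordinates. Summing gives $\E_{\th_0}V_n(t)\gtrsim nt\z_t\asymp n\t\z_\t$ for every $t$ in the window, and independence across $i$ yields $P_{\th_0}$-concentration via a standard variance bound (or a Bernstein inequality applied to the per-coordinate contributions, which are bounded by $1$).

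Third I would upgrade the pointwise concentration to uniformity in $t$. The cleanest route is a discretization: partition $[C^{-1}\t,C\t]$ by a dyadic grid of $O(\log n)$ points, so that on each subinterval the ratio of endpoints is at most $2$, and apply a union bound at the grid points. Between grid points I would invoke a mild continuity estimate for $t\mapsto V_n(t)$: each integrand $\Var(\th_i\given Y_i,t)$ is a smooth function of $t$ that varies by at most a bounded factor when $t$ varies by a bounded factor, and this comparison lifts to the sum. Combining (a)--(c) yields $\inf_{t\in[C^{-1}\t,C\t]}\hat r(\a,t)\ge D\sqrt{n\t\z_\t}$ with probability tending to one for some $D=D(C,\a)$.

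The hardest step, as I see it, is the uniformization in the third piece: the pointwise bound is essentially a reprise of Lemma~\ref{lem: LBradius}, but controlling the infimum requires quantitative continuity of $t\mapsto V_n(t)$ (or, alternatively, a monotonicity argument reducing the infimum to the endpoint $t=C^{-1}\t$). Monotonicity is plausible because $\Var(\th_i\given Y_i,\t,\l_i)=\l_i^2\t^2/(1+\l_i^2\t^2)$ is increasing in $\t$, but the posterior law of $\l_i$ also depends on $\t$, so the cleaner and safer path is the dyadic discretization combined with a ratio bound.
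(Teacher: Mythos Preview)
Your reduction in step (a) is not deterministic, and this is a genuine gap. Neither Chebyshev nor Paley--Zygmund gives $\hat r(\a,t)^2\ge c\,V_n(t)$ with a constant $c=c(\a)$ independent of $Y^n$. What Chebyshev gives, for $W=\|\th-\hat\th(t)\|_2^2$, is the \emph{additive} bound
\[
\hat r(\a,t)^2\ \ge\ \E(W\given Y^n,t)-c_\a\,\sd(W\given Y^n,t),
\]
valid for every $Y^n$ and $t$; Paley--Zygmund requires $(\E W)^2/\E W^2$ to be bounded below, equivalently $\var(W\given Y^n,t)/V_n(t)^2$ to be bounded above. In either case the passage to $\hat r^2\ge c\,V_n(t)$ hinges on a bound for $\var(W\given Y^n,t)$ relative to $V_n(t)^2$, and that ratio is a \emph{random} function of $Y^n$ and $t$. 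Your proposal never controls it, and your uniformization in (c) only addresses $V_n(t)$, not this second-moment term. (The side remark that the per-coordinate contributions are ``bounded by $1$'' is also not right: $\Var(\th_i\given Y_i,t)\le 1+Y_i^2$, cf.\ Lemma~\ref{LemmaBoundsPostMeanVariance}(v).)

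The paper's proof keeps the additive Chebyshev decomposition and then handles the two pieces separately, \emph{each} uniformly in $t\in T=[C^{-1}\t,C\t]$. For the mean it invokes Lemma~\ref{LemmaMomentsOfThetaGivenY} (which already states $\E_0\inf_{t\in T}\Var(\th_i\given Y_i,t)\gtrsim \t\z_\t$) together with the uniform law of large numbers of Lemma~\ref{lem: uniform1} (an entropy/chaining argument, not a dyadic grid) to get $\inf_{t\in T}\E(W\given Y^n,t)\gtrsim n\t\z_\t$. For the fluctuation it uses the third assertion of Lemma~\ref{LemmaMomentsOfThetaGivenY}, which bounds $\E_0\sup_{t\in T}\E\bigl[(\th_i-\hat\th_i(t))^4\given Y_i,t\bigr]\lesssim \t\z_\t^3$, so $\E_0\sup_{t\in T}\var(W\given Y^n,t)\lesssim n\t\z_\t^3$ and Markov gives $\sup_{t\in T}\sd(W\given Y^n,t)=o_P(n\t\z_\t)$ precisely under $n\t/\z_\t\to\infty$. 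Your discretization could in principle replace the chaining for the mean term, but the missing ingredient is the uniform fourth-moment control; once you add that, your argument and the paper's essentially coincide.
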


\begin{proof}
Set $T=[C^{-1}\t,C\t]$.
By the arguments in the proof of Lemma~\ref{lem: LBradius} and with the same notation, for $1/c^2\le 1-\a$,
$$\inf_{t\in T}\hat r^2(\a,t)\ge\inf_{t\in T}\E(W\given Y^n,t)-c \sup_{t\in T} \sd(W\given Y^n,t).$$
By the first assertion of Lemma~\ref{LemmaMomentsOfThetaGivenY} we have $\inf_{t\in T}\E_0 \E(W\given Y^n,t)\gtrsim n\t\z_\t$. 
Combination with Lemma~\ref{lem: uniform1} gives that the infimum on the right side of the display is
bounded below by a multiple of $n\t\z_\t$, with probability tending to one.
By the second assertion of Lemma~\ref{LemmaMomentsOfThetaGivenY} we have
$\E_0\sup_{t\in T}\var(W\given Y^n,t)\lesssim n\t\z_\t^3$.
An application of Markov's inequality shows that the supremum on the right side of the display is
bounded above by $o(n\t\z_\t)$,  with probability tending to one, in view of the
assumption that $n\t/\z_\t\rightarrow\infty$.
\end{proof}

\begin{lemma}\label{Lem:hyperLB}
Suppose that the density of $\pi_n$ is  bounded away from zero on $[1/n,1]$.
For every sufficiently large constant $D$ there exists $d>0$ such 
that $\hat r(\a)\ge d \sqrt{n\z_{\tilde{\t}_n}\tilde{\t}_n}$  with $P_{\th_0}$-probability tending to one,
uniformly in $\th_0$ satisfying the excessive-bias restriction \eqref{condition: EB} 
with $\tilde{p}\geq D \log n$, where $\tilde\t_n = \t_n(\tilde p)$.
\end{lemma}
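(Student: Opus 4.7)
The plan is to emulate the proofs of Lemma~\ref{lem: LBradius} and Lemma~\ref{lem: LBradiusAdapt}, but integrating over the hyperposterior of $\t$. The key preparatory step is to show that, with $P_{\th_0}$-probability tending to one uniformly over $\th_0$ satisfying the excessive-bias restriction with $\tilde p\ge D\log n$, the hyperposterior concentrates on an interval $I_n=[C_1^{-1}\tilde\t_n,C_1\tilde\t_n]$ around the ``oracle'' value $\tilde\t_n=\t_n(\tilde p)$. The upper bound $\t\lesssim \tilde\t_n$ follows along the lines of Lemma~\ref{lem:HBhyper}, now with $\tilde p$ playing the role of $p_n$ and using the bias part of \eqref{condition: EB}. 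The matching lower bound $\t\gtrsim\tilde\t_n$ is the critical new ingredient that prevents over-shrinkage: it comes from the detectable coordinates guaranteed by the second inequality in \eqref{condition: EB}, whose marginal likelihood contribution decays exponentially for $\t\ll\tilde\t_n$, combined with the assumption $\tilde p\ge D\log n$ and the boundedness of $\pi_n$ away from zero on $[1/n,1]$.

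With concentration in place, I would apply Chebyshev's inequality inside the hyperposterior to $W=\|\th-\hat\th\|_2^2$, exactly as in Lemma~\ref{lem: LBradius}: since $\hat\th=\E(\th\given Y^n)$, for any $c$ with $1/c^2\le\a$ one obtains $\hat r(\a)^2\ge \E(W\given Y^n)-c\sqrt{\var(W\given Y^n)}$. For the mean, the law of total variance coordinate-wise gives
\begin{align*}
\E(W\given Y^n)=\sum_i\var(\th_i\given Y^n)\ge \int\E\bigl(\|\th-\hat\th(\t)\|_2^2\given Y^n,\t\bigr)\pi(\t\given Y^n)\,d\t.
\end{align*}
The inner expectation has $P_{\th_0}$-expectation at least of order $n\t\z_\t$ by the first part of Lemma~\ref{LemmaMomentsOfThetaGivenY}, and Lemma~\ref{lem: uniform1} upgrades this to an in-probability bound uniform in $\t\in I_n$. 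Combined with concentration on $I_n$, this yields $\E(W\given Y^n)\gtrsim n\tilde\t_n\z_{\tilde\t_n}$ with high $P_{\th_0}$-probability.

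For the variance, I would decompose $W=\sum_i W_i$ with $W_i=(\th_i-\hat\th_i)^2$ and use conditional independence of the $\th_i$ given $(Y^n,\t)$ to split $\var(W\given Y^n)$ into a diagonal sum $\sum_i\var(W_i\given Y^n)$ and an off-diagonal sum $\sum_{i\ne j}\Cov(\E(W_i\given Y^n,\t),\E(W_j\given Y^n,\t)\given Y^n)$ arising from fluctuations in $\t$. The diagonal sum is controlled by a further application of the tower property and the second part of Lemma~\ref{LemmaMomentsOfThetaGivenY}, giving a bound of order $n\tilde\t_n\z_{\tilde\t_n}^3$. The off-diagonal sum is handled by bounding the oscillation of each $\t\mapsto\E(W_i\given Y^n,\t)$ over the bounded-ratio interval $I_n$ through Lemma~\ref{LemmaBoundsPostMeanVariance} and showing that the cumulative effect is of smaller order than $n\tilde\t_n\z_{\tilde\t_n}^3$. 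Combining with the mean estimate yields
\begin{align*}
\hat r(\a)^2\gtrsim n\tilde\t_n\z_{\tilde\t_n}\Bigl(1-O\bigl((\z_{\tilde\t_n}/(n\tilde\t_n))^{1/2}\bigr)\Bigr),
\end{align*}
and since $n\tilde\t_n/\z_{\tilde\t_n}\asymp\tilde p\ra\infty$ under $\tilde p\ge D\log n$, the claim $\hat r(\a)\ge d\sqrt{n\tilde\t_n\z_{\tilde\t_n}}$ follows.

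The hard part will be the off-diagonal covariance term: one needs coordinate-wise quantitative control on how much $\E(W_i\given Y^n,\t)$ changes as $\t$ sweeps across $I_n$, and then to sum these small oscillations over $i$ while still beating $n\tilde\t_n\z_{\tilde\t_n}^3$. A secondary but genuinely new difficulty is the lower-bound concentration $\t\gtrsim\tilde\t_n$ in Step~1, since this is where the excessive-bias restriction and the assumption $\tilde p\ge D\log n$ truly enter, and one must verify that $\pi_n$ being merely bounded away from zero is sufficient to translate the likelihood signal from the detectable coordinates into hyperposterior mass on $[C_1^{-1}\tilde\t_n,1]$.
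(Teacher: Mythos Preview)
Your concentration step and the mean lower bound are essentially the paper's Lemma~\ref{lem:hyperLB} and the first part of the proof of Lemma~\ref{Lem:hyperLB}, so those are fine. The genuine problem is the off-diagonal covariance term, and your proposed fix does not work. Given $(Y^n,\t)$ the coordinates are independent, so
\[
\var(W\given Y^n)=\E\bigl[\var(W\given Y^n,\t)\given Y^n\bigr]+\var\bigl[\E(W\given Y^n,\t)\given Y^n\bigr],
\]
and the second term is exactly $\var_\t\bigl[\sum_i\E(W_i\given Y^n,\t)\bigr]$. Over a constant-ratio interval $I_n$ the map $\t\mapsto \var(\th_i\given Y_i,\t)$ moves by a constant factor (cf.\ Lemma~\ref{LemmaMomentsOfThetaGivenY}), and these movements are in the \emph{same direction} across all $i$, so the oscillation of the sum is of order $n\tilde\t_n\z_{\tilde\t_n}$, not $o(n\tilde\t_n\z_{\tilde\t_n})$. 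Unless you prove that the hyperposterior concentrates on a much smaller scale than all of $I_n$, you get $\var(W\given Y^n)$ of the same order as $\E(W\given Y^n)^2$, and since Chebyshev forces $c\ge 1/\sqrt{1-\a}>1$, the inequality $\hat r(\a)^2\ge \E(W\given Y^n)-c\,\sd(W\given Y^n)$ gives nothing. (Incidentally, the constraint is $1/c^2\le 1-\a$, not $1/c^2\le\a$.)

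The paper sidesteps this completely. Rather than applying Chebyshev under the full hyperposterior, it uses concentration on $T_n$ to find a \emph{single} $\t=\t(Y^n)\in T_n$ with $\Pi(\|\th-\hat\th\|_2\le\hat r(\a)\given Y^n,\t)\ge 1-2\a$, and then applies Chebyshev under this fixed-$\t$ posterior, where the coordinates are independent and no off-diagonal term arises. The price is that the relevant variable is now $\tilde W=\|\th-\hat\th\|_2^2$ centered at the full posterior mean $\hat\th$ rather than $\hat\th(\t)$, which produces an additional term $\sum_i(\hat\th_i-\hat\th_i(\t))^4$ in the variance bound. Controlling $\|\hat\th-\hat\th(\t)\|_4^4$ uniformly over $\t\in T_n$ is where the real work goes, and it is here (not in your Step~1) that the assumption $\tilde p\ge D\log n$ combined with the exponential bound $\Pi(\t\notin T_n\given Y^n)\lesssim e^{-c_3\tilde p}$ is actually used: it kills the contribution from $\t\notin T_n$ against a polynomial-in-$n$ crude bound on the $\ell_4$ norm.
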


\begin{proof}
Set $T_n=[C^{-1}\tilde\t_n, C\tilde\t_n]$, for $C$ the constant in Lemma~\ref{lem:hyperLB}. Then by the
definition of $\hat r_n(\a)$ and the latter lemma
$\int_{\t\in T_n} \Pi\bigl(\|\th-\hat\th\|_2\leq r(\a) \given \t,Y^n)\pi(\t \given Y^n\bigr)\,d\t$ is equal to $1-\a+o(1)$.
Therefore there exists $\t=\t(Y^n)\in T_n$ such that
\begin{align*}
\Pi\bigl(\|\th-\hat\th\|_2\leq \hat r(\a) \given \t,Y^n\bigr) \geq 1-2\a.
\end{align*}
Introduce the notation $\tilde{W}=\|\th-\hat\th\|_2^2$,  and denote by $\E(\cdot | Y^n,\t)$ and  $\sd(\cdot | Y^n,\t)$ the posterior expected value 
and standard variation for given $\t$. By an application of  Chebyshev's inequality,
as in the proofs of Lemmas~\ref{lem: LBradius} and~\ref{lem: LBradiusAdapt},
we see that $\hat r(\a)\ge\E(\tilde{W}\given  \t,Y^n)- c \sd(\tilde{W}\given  \t,Y^n)$, for a sufficiently small constant $c>0$. Hence
it suffices to show that $\inf_{\t\in T_n} \E(\tilde{W}\given  \t,Y^n)\gtrsim n\tilde\t_n \z_{\tilde\t_n}$ and 
$\sup_{\t\in T_n} \sd(\tilde{W}\given  \t,Y^n)\ll n\tilde\t_n \z_{\tilde\t_n}$, with $P_{\th_0}$-probability tending to one.

Since $\hat\th(\t)$ is the mean of $\th$ given $(Y^n,\t)$ and the coordinates $\th_i$ are conditionally independent,
for $W=\|\th-\hat\th(\t)\|_2^2$,
\begin{align*}
\E(\tilde{W}\given  \t,Y^n)&= \E(W\given \t,Y^n)+\|\hat\th-\hat\th(\t)\|_2^2\geq \E(W\given \t,Y^n),\\
\var(\tilde{W}\given  \t,Y^n)&\lesssim \sum_{i=1}^{n} \E\big((\th_i-\hat\th_i(\t))^4 \given \t,Y^n\big) + \sum_{i=1}^{n} \big(\hat\th_i-\hat\th_i(\t)\big)^4.
\end{align*}
The proof of Lemma~\ref{lem: LBradiusAdapt} shows that $\inf_{\t\in T_n}\E(W\given \t,Y^n)\gtrsim n\tilde\t_n\z_{\tilde\t_n}$,
with $P_{\th_0}$-probability tending to one, and hence the same conclusion holds for 
$\inf_{\t\in T_n}\E(\tilde{W}\given  \t,Y^n)$.

It remains to deal with the variance in the preceding display. 
By Lemma~\ref{LemmaMomentsOfThetaGivenY} the $\E_0$-expected value of the supremum over $\t\in T_n$ of the first term on the right
is bounded above by $n\tilde\t_n\z_{\tilde\t_n}^3$, which shows that this term is suitably bounded in view of Markov's inequality. 
By Jensen's inequality the second can be bounded as
\begin{align}
\|\hat\th(\t)-\hat\th\|_4^4&\leq \int_{1/n}^{1} \|\hat\th(\t)-\hat\th(t)\|_4^4\pi(t \given Y^n)\,dt\nonumber\\
&\leq \sup_{t\in T_n}\|\hat\th(\t)-\hat\th(t)\|_4^4+
\sup_{t\in[1/n,1]}\|\hat\th(\t)-\hat\th(t)\|_4^4\,\Pi(t\notin T_n \given Y^n),\label{eq: hulp01_new}
\end{align}
where $\|\th\|_4^4=\sum_{i=1}^n\th_i^4$. In view of Lemma~\ref{LemmaBoundsPostMeanVariance} (i)+(ii),
\begin{align*}
\sup_{\t_1,\t_2\in[1/n,1]}\|\hat\th(\t_1)-\hat\th(\t_2)\|_4^4
&\lesssim \sup_{\t\in[1/n,1]}\|\hat\th(\t)-Y^{n}\|_4^4\lesssim 8n(\log n)^2.
\end{align*}
Furthermore $\Pi(\t\notin T_n \given Y^n )\leq e^{-c_3 \tilde{p}}$ by Lemmas~\ref{lem:hyperLB} and~\ref{lem:HBhyper},
for a constant $c_3>0$. Hence for $\tilde{p}\geq D \log n$, where $D>c_3^{-1}$,
the second term on the right hand side of \eqref{eq: hulp01_new} tends to zero. 

To bound the first term of \eqref{eq: hulp01_new} we first use the triangle inequality 
to obtain that $\sup_{t\in T_n}\|\hat\th(\t)-\hat\th(t)\|_4\le 2 \sup_{t\in T_n}\|\hat\th(t)-\th_0\|_4$.
We next split the sum in $\|\hat\th(t)-\th_0\|_4^4$ in the terms with $|\th_{0,i}|>\z_{\tilde\t_n}/10$ and
the remaining terms. 

If $|\th_{0,i}|>\z_{\tilde\t_n}/10$, then we use that 
$|\hat\th_i(t)-\th_{0,i}|\le |\hat\th_i(t)-Y_i|+|Y_i-\th_{0,i}|\lesssim \z_{t}+|Y_i-\th_{0,i}|$, so that
$$\E_{\th_{0,i}} \sup_{t\in T_n}|\th_{0,i} - \hat\th_i(t)|^4 
\lesssim \z_{\tilde\t_n}^4+1\lesssim \z_{\tilde\t_n}^4.$$
By an analogous argument as in the proof of Theorem~\ref{thm:coverage}
the number of terms with $|\th_{0,i}|>\z_{\tilde\t_n}/10$ is bounded by a multiple
of $\tilde p$, so that their total contribution is bounded above by $\tilde p \z_{\tilde\t_n}^4$.

For the terms with $|\th_{0,i}|\le \z_{\tilde\t_n}/10$, we first use that 
$|\hat\th_i(t)-\th_{0,i}|\le |\hat\th_i(t)|+|\th_{0,i}| \le |Y_i -\th_{0,i}|+2|\th_{0,i}|$, so that 
$$\E_{\th_{0,i}}\sup_{t\in T_n}|\hat\th_i(t)-\th_{0,i}|^4\1_{|Y_i-\th_{0,i}|>\z_{\tilde\t_n}}
\lesssim \int_{\z_{\tilde\t_n}}^\infty y^4\phi(y)\,dy+\th_{0,i}^4\lesssim \tilde\t_n\z_{\tilde\t_n}^3+\th_{0,i}^4.$$
Second we use that $|\hat\th_i(t)-\th_{0,i}|\lesssim \t |Y_i|e^{Y_i^2/2}+|\th_{0,i}|$, 
by Lemma~\ref{LemmaBoundsPostMeanVariance} (iii), so that 
\begin{align*}\E_{\th_{0,i}}\sup_{t\in T_n}|\hat\th_i(t)-\th_{0,i}|^4\1_{|Y_i-\th_{0,i}|\le \z_{\tilde\t_n}}
&\lesssim \tilde\t_n^4\int_{-\z_{\tilde\t_n}}^{\z_{\tilde\t_n}} (y+\th_{0,i})^4e^{2(y+\th_{0,i})^2}\phi(y)\,dy+\th_{0,i}^4\\
&\lesssim \tilde\t_n\z_{\tilde\t_n}^3 e^{4\z_{\tilde\t_n}|\th_{0,i}|+2\th_{0,i}^2}+\th_{0,i}^4.
\end{align*}
For $|\th_{0,i}|\lesssim \z_{\tilde\t_n}^{-1}$,  the exponential in the first term is bounded,
and the first term is bounded above by $\tilde\t_n\z_{\tilde\t_n}^3$. For $|\th_{0,i}|\gtrsim \z_{\tilde\t_n}^{-1}$,
but still $|\th_{0,i}|\le \z_{\tilde\t_n}/10$, the first term can be seen to be bounded above by
$\tilde\t_n\z_{\tilde\t_n}^3\tilde\t_n^{-21/25}$, which is bounded by $\th_{0,i}^4$ in that case.

Combining all the preceding  computations, we obtain:
\begin{align*}
\E_{\th_0} \sup_{t \in T_n} \|\th_0 - \hat\th(t)\|_4^4 \lesssim
\tilde p \z_{\tilde\t_n}^4 + n\tilde\t_n  \z_{\tilde\t_n}^3+ \sum_{i: |\th_{0,i}| <  \z_{\tilde\t_n}/10} \th_{0,i}^4.
\end{align*}
We see that this is of the desired order $n\tilde\t_n  \z_{\tilde\t_n}^3$ by bounding $\th_{0,i}^4$ by $ \z_{\tilde\t_n}^2 \th_{0,i}^2$,
and next applying the excessive-bias restriction.
\end{proof}

\begin{lemma}\label{lem:hyperLB}
If $\th_0$ satisfies the excessive-bias restriction \eqref{condition: EB} with $\tilde p\ge D\log n$
for a sufficiently large constant $D$, and the density of $\pi_n$ is  bounded away from zero on $[1/n,1]$,
then  there exist constants $C>0$ and $c_3>0$ such that
\begin{align*}
\Pi(\t: \t\leq C^{-1}\tilde\t_n \text{ or }\t\ge C\tilde\t_n \given Y^n)\lesssim e^{-c_3 \tilde{p}}.
\end{align*}
\end{lemma}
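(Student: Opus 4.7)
The approach is to apply Bayes' rule and write
\begin{equation*}
\Pi(\t \in B \mid Y^n) = \frac{\int_B \pi_n(\t) \prod_{i=1}^n \psi_\t(Y_i)\, d\t}{\int_{1/n}^{1} \pi_n(\t) \prod_{i=1}^n \psi_\t(Y_i)\, d\t},
\end{equation*}
with $B = \{\t \le C^{-1}\tilde\t_n\} \cup \{\t \ge C\tilde\t_n\}$, and to show that, with $P_{\th_0}$-probability tending to one, this ratio is at most $e^{-c_3 \tilde p}$. I would split $B$ into its over-shrinking half ($\t$ small) and under-shrinking half ($\t$ large), and treat them separately. The structure parallels the proofs of the MMLE bound in Lemma \ref{thm: LB_tau} and of the empirical-Bayes contraction in \cite{contractionpaper}, but one now needs the posterior of $\t$ to concentrate around $\tilde\t_n = \t_n(\tilde p)$ uniformly on the excessive-bias class $\Theta[p_n]$.

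For the denominator I would restrict the integral to a small interval $J_n$ around $\tilde\t_n$ on which $\pi_n$ is bounded away from zero. Using the representation $\psi_\t(y) = (\t/\pi) I_{-1/2}(y)\phi(y)$ from \eqref{Eqpsitau} together with the standard bounds on $I_{-1/2}$, and separating the $n - p_n$ noise coordinates from the $p_n$ nonzero ones, I would obtain a lower bound of the form $c\,\tilde\t_n \cdot \prod_{i=1}^n \psi_{\tilde\t_n}(Y_i)\cdot e^{-o(\tilde p)}$. The excessive-bias restriction enters here to tame the signal coordinates: the $\tilde p / C_s$ detectable signals guarantee a well-controlled contribution from the ``large'' indices, while the first inequality of \eqref{condition: EB} controls the contribution of the intermediate signals.

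For the numerator I would establish uniform likelihood-ratio bounds $\prod_i \psi_\t(Y_i)/\psi_{\tilde\t_n}(Y_i) \le e^{-c\tilde p}$ on each of the two bad regions. In the over-shrinking regime $\t \le C^{-1}\tilde\t_n$, each of the $\tilde p / C_s$ detectable signals contributes at least a constant loss to $\log \psi_\t(Y_i)$, because a horseshoe marginal with small scale cannot put enough mass at observations of order $\sqrt{2\log(n/\tilde p)}$; summing gives the needed factor $e^{-c\tilde p}$, exactly as in the lower-bound half of Lemma \ref{thm: LB_tau}. In the under-shrinking regime $\t \ge C\tilde\t_n$ the loss comes from the many noise coordinates: using the small-$|y|$ expansion of $\psi_\t$ one obtains a constant-order drop in $\log \psi_\t(Y_i) - \log \psi_{\tilde\t_n}(Y_i)$ at a positive fraction of noise observations, and the excessive-bias restriction bounds the intermediate signals' contribution to this sum.

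The main obstacle will be achieving rate $\tilde p$ (rather than $p_n$) in the under-shrinking case, since the drift there is driven by the noise observations whose count is of order $n - p_n$; one must argue carefully that only a drift of order $\tilde p$ is needed to dominate the prior and polynomial losses, and that the excessive-bias restriction is precisely what makes this achievable for intermediate-size signals. A secondary technical point is that the polynomial factor $n$ arising from the prior support $[1/n, 1]$ and from worst-case likelihood-ratio bounds must be absorbed by $e^{-c_3 \tilde p}$; this is exactly why the hypothesis $\tilde p \ge D \log n$ with $D$ sufficiently large is imposed.
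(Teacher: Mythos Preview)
Your proposal is on the right track and would work, but the paper's execution is more streamlined and differs in one key respect. Rather than establishing uniform likelihood-ratio bounds $\prod_i \psi_\t(Y_i)/\psi_{\tilde\t_n}(Y_i) \le e^{-c\tilde p}$ directly, the paper exploits the \emph{monotonicity} of $\t\mapsto M_\t(Y^n)=\sum_i\log\psi_\t(Y_i)$ on each bad half-interval, together with the derivative bounds \eqref{EqDerivativeM} and \eqref{eq: UB_eb} already established in the proof of Lemma~\ref{thm: LB_tau}. Since $M_\t$ is increasing on $[1/n,c_5\tilde\t_n]$, the numerator over $\{\t\le c\tilde\t_n\}$ is trivially bounded by $c\tilde\t_n\, e^{M_{c\tilde\t_n}(Y^n)}$; the derivative lower bound $\tfrac{d}{d\t}M_\t\ge c_6\tilde p/\t$ on an adjacent interval $[2c\tilde\t_n,4c\tilde\t_n]$ then gives a gap $M_{2c\tilde\t_n}-M_{c\tilde\t_n}\ge c_8\tilde p$, and the denominator is bounded below accordingly. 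The under-shrinking half is symmetric, using that $M_\t$ is decreasing for $\t\ge c_6\tilde\t_n$ with derivative $\le -c_9\tilde p/\t$. The leftover factor $1/\tilde\t_n$ is absorbed by $e^{-c_3\tilde p}$ precisely because $\tilde p\ge D\log n$, as you correctly anticipated.

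One inaccuracy in your sketch: in the under-shrinking regime you describe a ``constant-order drop'' in $\log\psi_\t(Y_i)-\log\psi_{\tilde\t_n}(Y_i)$ at each noise coordinate. That is not how the arithmetic works; the per-coordinate drop is only of order $\t/\z_\t$ (Proposition~\ref{prop:Em(Y)}), and it is the aggregate over $n-p$ coordinates, integrated in $\t$ over a multiplicative interval near $\tilde\t_n$, that yields the $\tilde p$-sized gap. Your direct likelihood-ratio route would ultimately need this same computation, and monotonicity is what lets you avoid controlling the ratio uniformly over the entire bad region. The paper's approach buys brevity by recycling the MMLE analysis wholesale; your approach would recover the same conclusion but with more bookkeeping.
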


\begin{proof}
As seen in the proof of Lemma~\ref{thm: LB_tau} the function $\t\mapsto M_\t(Y^n)$ is
increasing for $\t\le c_5\tilde\t_n$. Inspection of the proof (see \eqref{EqDerivativeM})
shows that its derivative is bounded below by $c_6\tilde p/\t$ for $\t$ in the
interval $[c\tilde\t_n,2c\tilde\t_n]$, for $2c<c_5/2$ and suitably chosen $c_5$. 
This shows that $M_\t(Y^n)-M_{c\tilde\t_n}(Y^n)\ge c_8\tilde p$ in the interval $[2c\tilde\t_n,4c\tilde\t_n]$, whence
\begin{align*}
\Pi(\t: \t\leq c\tilde\t_n \given Y^n)
&\leq \frac{\int_{1/n}^{c \tilde\t_n}e^{M_{\t}(Y^n)}\pi(\t)\,d\t}{\int_{2c\tilde{\t}_n}^{4c\tilde{\t}_n}e^{M_{\t}(Y^n)}\pi(\t)\,d\t}
\lesssim \frac{e^{M_{c \tilde\t_n}(Y^n)}}{e^{M_{c \tilde\t_n}(Y^n)+c_8\tilde{p}}c\tilde{\t}_n}.
\end{align*}
This is bounded by $e^{-c_3\tilde p}$, by the assumption that $\tilde p\gtrsim \log n$.

The same bound on $\Pi(\t: \t\geq c\tilde\t_n \given Y^n)$ can be verified following the same 
reasoning, now using that $\t\mapsto M_\t(Y^n)$ is decreasing for $\t\ge c_6\tilde \t_n$ 
with derivative bounded above by $-c_9\tilde p/\t$ on an interval $[c\tilde\t_n/2,c\tilde\t_n]$
for $c/2>2c_6$ (see  \eqref{eq: UB_eb}).
\end{proof}

\section{\label{sec:proofs_model_selection}Proofs for the model selection results}
The following theorem is the non-adaptive version of Theorem \ref{thm: modelselect:adapt}.

\begin{theorem}\label{thm: modelselect:credible} Suppose that  $k_S>0$, $k_M<1$, $k_L>1$, and $f_\t\uparrow\infty$,  as $\t\ra0$. Then for $\t\ra 0$ and 
any sequence $\g_n\rightarrow c$ for some $0\le c\leq 1/2$, satisfying $\z_{\g_n}\ll \z_\t$, the following statements hold. 
\begin{enumerate}
\item[(i)] The false discovery rate of the credible intervals based model selection procedure is bounded from above by $\gamma_n$.

\item[(ii)] At least a $1-\gamma_n$ fraction of the signals in $\cL$ will be selected, with probability tending to one, i.e. 
\begin{align*}
P_{\th_0} \Bigl( \frac1{|\cL|}{|\{i\in \cL: 0\notin \hat{C}_{ni}(C,\t)\}|}\geq 1-\g_n \Bigr)&\rightarrow 1,
\end{align*}
for any $C>0$.

\item[(iii)] We will select at most a $\gamma_n$ fraction of the nonzero parameters $\theta_{0,i}\in \cS\cup \cM$ using the credible set method (with any blow up factor $C\geq 1$), with probability tending to one.
\end{enumerate}
\end{theorem}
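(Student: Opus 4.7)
The plan is to reduce all three assertions to one-sided posterior mass bounds, as advertised in the remark following Theorem~\ref{thm: modelselect:adapt}. I will first observe that if $\hat\theta_i(\tau) \geq 0$ and $\Pi(\theta_i \leq 0 \given Y_i, \tau) > \alpha$, then $0 \in \hat C_{ni}(1,\tau)$: otherwise the credible interval $[\hat\theta_i(\tau) - \hat r_i(\alpha,\tau), \hat\theta_i(\tau) + \hat r_i(\alpha,\tau)]$, which carries posterior mass $1-\alpha$, would lie in $(0,\infty)$, forcing $\Pi((-\infty,0] \given Y_i, \tau) \le \alpha$. A symmetric statement handles $\hat\theta_i(\tau) < 0$, and since $\hat C_{ni}(1,\tau) \subset \hat C_{ni}(C,\tau)$ for $C \geq 1$, a sufficient condition for $0 \in \hat C_{ni}(C,\tau)$ is that both $\Pi(\theta_i \leq 0 \given Y_i, \tau)$ and $\Pi(\theta_i \geq 0 \given Y_i, \tau)$ exceed $\alpha$.

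The main technical step is a \emph{posterior balance lemma}: for $|Y_i| \leq k \z_\t$ with $k < 1$, both $\Pi(\theta_i \leq 0 \given Y_i, \tau)$ and $\Pi(\theta_i \geq 0 \given Y_i, \tau)$ tend to $1/2$ as $\tau \to 0$. Using
\begin{equation*}
\Pi(\theta_i \leq 0 \given Y_i, \tau) = \int_0^\infty \Phi\!\left(-\tfrac{\lambda\tau}{\sqrt{1+\lambda^2\tau^2}} Y_i\right) \pi(\lambda \given Y_i, \tau)\, d\lambda,
\end{equation*}
I would reduce the claim to showing that $\pi(\lambda \given Y_i, \tau)$ concentrates on $\lambda \leq 1/(K\tau\z_\t)$ for arbitrarily large fixed $K$. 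Inspecting the explicit density, the unnormalized mass on $[0,1/\tau]$ is of order $e^{-Y_i^2/2}$, while the unnormalized mass on $[1/\tau,\infty)$ is of order $\tau$; since $|Y_i| \leq k\z_\t$ with $k < 1$ gives $e^{-Y_i^2/2} \geq \tau^{k^2} \gg \tau$, the small-$\lambda$ region dominates. On this region the argument of $\Phi$ is at most $k/K$ in absolute value, so $\Phi(-\cdot) \geq \Phi(-k/K)$, which exceeds $\alpha \leq 1/2$ for $K$ large enough.

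Parts (i) and (iii) will then follow by checking the hypothesis $|Y_i| \leq k\z_\t$ with $k<1$ for a fraction at least $1-\gamma_n$ of the relevant indices. For (i), $\theta_{0,i}=0$ yields $Y_i=\varepsilon_i$; a Hoeffding bound applied to the i.i.d.\ Bernoullis $\1\{|\varepsilon_i|>\z_{\gamma_n/2}\}$ of mean $\gamma_n$ shows that at most a fraction $\gamma_n + o_P(1)$ of zero coordinates satisfy $|Y_i|>\z_{\gamma_n/2}$, and the remainder fall in the regime of the balance lemma since $\z_{\gamma_n/2} \ll \z_\t$. For (iii), $|\theta_{0,i}| \leq k_M\z_\t$ with $k_M<1$ and the same Hoeffding event yield $|Y_i| \leq k_M\z_\t + \z_{\gamma_n/2} \leq k'\z_\t$ with $k'<1$ eventually, again activating the balance lemma.

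For (ii) the argument runs in reverse. On the event $|\varepsilon_i|\leq \z_{\gamma_n/2}$, which captures a fraction at least $1-\gamma_n$ of $i\in\cL$, I would bound $|Y_i|\geq k_L\z_\t - \z_{\gamma_n/2} \geq A\z_\t$ for some $A>1$ eventually, by $k_L>1$ and $\z_{\gamma_n/2}\ll\z_\t$. Invoking Lemma~\ref{LemmaBoundsPostMeanVariance}(i) gives $|Y_i-\hat\theta_i(\tau)|\leq 2/\z_\t$, hence $|\hat\theta_i(\tau)| \geq A\z_\t(1-o(1))$. Since $\var(\theta_i \given Y_i, \tau)$ remains bounded when $|Y_i| \gtrsim \z_\t$, Chebyshev supplies $\hat r_i(\alpha,\tau) = O(1)$, so $|\hat\theta_i(\tau)| \gg C\,\hat r_i(\alpha,\tau)$ for any fixed $C$ and $0 \notin \hat C_{ni}(C,\tau)$ follows. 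The main obstacle will be the balance lemma itself, specifically obtaining control of the horseshoe $\lambda$-posterior tails that is uniform over the full window $|Y_i| \leq k\z_\t$ with $k<1$.
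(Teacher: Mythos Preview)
Your proposal is correct and follows essentially the same route as the paper: part~(ii) via Lemma~\ref{LemmaBoundsPostMeanVariance}(i),(iv) plus Chebyshev on the radius, and parts~(i),(iii) via the one-sided posterior mass bound $\Pi(\th_i\le 0\given Y_i,\t)>\a$ obtained from the normal mixture over $\l_i$. The only wrinkle is your balance-lemma sketch: you claim concentration of $\pi(\l\given Y_i,\t)$ on $[0,1/(K\t\z_\t)]$, but the mass comparison you wrote down only separates $[0,1/\t]$ from $[1/\t,\infty)$ and does not yet control the intermediate range $[1/(K\t\z_\t),1/\t]$; the paper sidesteps this by using a \emph{fixed} large $M$ (invoking \eqref{eq: hyper_post_mass}) and observing that $M\t|Y_i|\le M\t\, k\z_\t\to 0$ already drives $\Phi(-\cdot)\to 1/2$, which is simpler and suffices.
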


 \begin{proof}
 For statement (ii), we note first that in view of the proof of  \eqref{eq: margcovL} by taking $L_n=z_{\alpha}^{-1}\zeta_{\gamma_n}+\delta$ (for some $\delta>0$) we have that with probability tending to one for $1-\gamma_n$ fraction of the indexes in $\cL$, denoted by $\tilde{\cL}=\tilde{\cL}(Y)$, the inequality $|\eps_i|\leq L_n z_{\alpha}+o(1)=\zeta_{\gamma_n/2}+\delta z_{\alpha}+o(1)=o(\zeta_{\tau})$ holds. Therefore, for all $i\in\tilde{\cL}$ we have $|Y_i|\geq|\theta_{0,i}|-|\eps_i|\geq (k_L+o(1))\zeta_{\tau}$. Hence in view of Lemma \ref{LemmaBoundsPostMeanVariance} (iv) and by Chebyshev's inequality we have that $\hat{r}_{i}(\alpha,\tau)\leq \sqrt{1+\zeta_{\tau}^{-2}}/{\sqrt{\alpha}}$. Furthermore, in view of \eqref{eq: hulp01} we have that again for all $i\in\tilde{\cL}$ the inequality $|\hat\theta_i(\tau)|\geq |\theta_{0,i}|-|\eps_i|-2\zeta_{\tau}^{-1}\geq (k_L+o(1))\z_\t\gg \hat{r}_{i}(\alpha,\tau)$ holds, finishing the proof of assertion (ii).

Next we deal with statements (i) and (iii) jointly. Without loss of generality let us assume that $Y_i\geq 0$. First note that $\Pi(\theta_i<0 \given Y_i,\tau)>\alpha$ implies that $0\in \hat{C}_{ni}(1,\alpha)$. Then, with $M$ sufficiently large as in the proof of \eqref{eq: margcovM},
\begin{align*}
\Pi(\theta_i<0| Y_i,\tau)
&\geq\int_{0}^{M}\Pi(\theta_i<0\given Y_i,\tau,\lambda_i)\pi(\lambda_i\given Y_i,\tau)d\lambda_i\\
&= \int_{0}^{M}\Pi\Big(\frac{\theta_i-\hat\theta_i(\t)}{\hat{r}_i(\tau,\lambda_i)}<-\frac{\hat\theta_i(\tau)}{\hat{r}_i(\tau,\lambda_i)}\Big| Y_i,\tau,\lambda_i\Big)\pi(\lambda_i\given Y_i,\tau)d\lambda_i\\
&=\Pi(\lambda_i\leq M\given Y_i,\tau ) \Phi\Big(-\frac{M\tau}{\sqrt{1+M^2\tau^2}}Y_i\Big).
\end{align*}
Next note that similarly to the proof of statement (ii) we have that with probability tending to one for at least $1-\gamma_n$ fraction of the zero parameters and for at least $1-\gamma_n$ fraction of the nonzero parameters belonging to $\cS\cup\cM$ the corresponding observations satisfy $|Y_i|\leq k_M \zeta_{\tau}+ |\eps_i|\leq k \zeta_{\tau}$, for some $k<1$.  Then in the view of the proof of \eqref{eq: margcovM} we know that if $|Y_i|\leq k \zeta_{\tau}$ the first term of the right hand side of the preceding display tends to one, while the second term tends to $1/2$, since $Y_iM\tau/(\sqrt{1+M^2\tau^2})\rightarrow 0$. This readily gives us assertion (iii). To prove the precise statement (i) about the FDR we note that the probability that $|Y_i|>k \zeta_{\tau}$ for some $k_M<k<1$ is bounded from above by $\gamma_n$ and therefore following the above argument the probability of $0\notin C_{ni}(1,\alpha)$ is also upper bounded by $\gamma_n$, finishing the proof of our statements.

\end{proof}

\begin{theorem}\label{thm: modelselect:threshold}
With the thresholding model selection method with probability tending to one at least a $1-2\gamma_n$ fraction of the zero signals will not be selected. Furthermore at least a $1-\gamma_n$ fraction of signals belonging to the set $\cL$ will be selected. Finally, any nonzero signal belonging to the sets $\cS$ or $\cM$ will not be selected with probability tending to one.
\end{theorem}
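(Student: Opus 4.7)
The plan is to analyze the shrinkage factor $\kappa_i(\tau) = \hat\theta_i(\tau)/Y_i$ directly in each of the three regimes (zero, small-to-medium, large), using the pointwise bounds on $\hat\theta_i(\tau)$ collected in Lemma~\ref{LemmaBoundsPostMeanVariance}. Two ingredients do the work: bound~(iii), $|\hat\theta_i(\tau)| \leq \tau|Y_i|e^{Y_i^2/2}$, which controls $\kappa_i$ whenever $|Y_i|$ is small enough that $e^{Y_i^2/2}$ cannot compensate for $\tau$; and bound~(i), $|Y_i - \hat\theta_i(\tau)| \lesssim \zeta_\tau^{-1}$ when $|Y_i| \geq A\zeta_\tau$ with $A>1$, which shows $\kappa_i \approx 1$ in the large regime.

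For the zero coordinates and for signals in $\cS \cup \cM$, the true mean satisfies $|\theta_{0,i}| \leq k_M \zeta_\tau$ with $k_M < 1$ (trivially so when $\theta_{0,i}=0$). Fix any $k \in (k_M, 1)$. For zero coordinates, an exponentially small fraction of the $|\epsilon_i|$ exceeds $k\zeta_\tau$, so the event $\{|Y_i| \leq k\zeta_\tau\}$ occurs for all but a vanishing fraction (in particular less than $2\gamma_n$) of the zero indices with probability tending to one. For an individual index in $\cS \cup \cM$, the same event occurs with $P_{\theta_0}$-probability tending to one, using $\zeta_{\gamma_n} \ll \zeta_\tau$ to bound $|\epsilon_i|$. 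On this event, bound~(iii) gives $\kappa_i(\tau) \leq \tau e^{Y_i^2/2} \leq \tau^{1-k^2} \to 0$, so the coordinate is not selected. This handles both the first and third assertions.

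For the large signals $i \in \cL$, I would invoke the same quantile inequality from the positive side: at least a $1-\gamma_n$ fraction of indices in $\cL$ satisfy $|\epsilon_i| \leq \zeta_{\gamma_n/2} = o(\zeta_\tau)$ with probability tending to one. Combined with $|\theta_{0,i}| \geq k_L \zeta_\tau$ and $k_L > 1$, this forces $|Y_i| \geq (k_L - o(1))\zeta_\tau \geq A\zeta_\tau$ for some $A > 1$. Bound~(i) then yields, assuming without loss of generality $Y_i>0$ so that $\hat\theta_i(\tau)>0$, the estimate $\kappa_i(\tau) \geq 1 - 2/(A\zeta_\tau^2)$, which exceeds $1/2$ eventually, so the coordinate is selected.

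The main obstacle is not a genuine technical one but rather the careful bookkeeping between two layers of randomness: the empirical distribution of $|\epsilon_i|$ across the index set, which controls the fraction of typical indices, and the (conditional on $Y_i$ deterministic) behaviour of $\kappa_i(\tau)$. Both layers are handled exactly as in the proof of Theorem~\ref{thm: modelselect:credible}, with the simplification that the thresholding rule involves neither a posterior radius nor a blow-up constant, so the analysis reduces to plugging the typical size of $|Y_i|$ into the appropriate bound from Lemma~\ref{LemmaBoundsPostMeanVariance}.
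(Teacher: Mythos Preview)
Your argument is correct and follows essentially the same route as the paper: bound~(iii) for the zero and small/medium regimes, bound~(i) for $\cL$. One small difference worth noting: for the zero coordinates you bound the ratio $\kappa_i(\tau)=\hat\theta_i(\tau)/Y_i$ directly by $\tau e^{Y_i^2/2}\le \tau^{1-k^2}$, since the $|Y_i|$ in bound~(iii) cancels against the denominator. The paper instead bounds $|\hat\theta_i(\tau)|$ and $|Y_i|$ separately, which forces it to also exclude the indices with $|Y_i|\le \tau^{1-\delta}$ (an additional $O(\tau^{1-\delta})$ fraction) and is the reason the statement says $1-2\gamma_n$ rather than $1-\gamma_n$. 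Your shortcut is legitimate and in fact yields the slightly sharper conclusion that all but an $o(\gamma_n)$ fraction of zeros are correctly not selected; the claimed $1-2\gamma_n$ bound is of course implied. (Minor quibble: the excluded fraction $P(|\varepsilon_i|>k\zeta_\tau)\asymp\tau^{k^2}$ is polynomially small in $\tau$, not ``exponentially small''; what matters is that $\tau^{k^2}=o(\gamma_n)$, which follows from $\zeta_{\gamma_n}\ll\zeta_\tau$.)
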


\begin{proof}
First we deal with the zeroes. In view of the proof of \eqref{eq: margcovS} we have that with probability tending to one at least a $1-\gamma_n$ fraction of the zero coefficients satisfy
$|Y_i|\leq k_S\tau+\zeta_{\gamma_n/2}+\delta=o(\zeta_{\tau})$ (for some arbitrary $\delta>0$). Therefore on the same index set we have $|\hat\theta_i(\tau)|\leq \tau(1\vee|Y_i|e^{Y_i^2/2})=o( \tau^{1-\delta})$, for arbitrary $\delta>0$. We finish the proof by noting that $P(|Y_i|\leq \tau^{1-\delta})\leq \tau^{1-\delta}$ and therefore at least $1-\gamma_n-O(\tau^{1-\delta})\geq1-2\gamma_n$ fraction of the zero coordinates have $\kappa_i\leq 1/2$ and won't be selected as nonzero.

Next consider the large signals belonging to the set $\cL$. In view of \eqref{eq: hulp01} and the proof of \eqref{eq: margcovS} we have that with probability tending to one at least a $1-\gamma_n$ fraction of the signal coefficients belonging to the set $\cL$ satisfy
\begin{align*}
\Big|\frac{\hat\theta_i(\tau)}{Y_i}\Big|\geq \frac{|\theta_{0,i}|-|\eps_i|-2\zeta_{\tau}^{-1}}{|\theta_{0,i}|+|\eps_i|}\geq \frac{k_L\zeta_{\tau}-\zeta_{\gamma_n/2}-\delta-2\zeta_{\tau}^{-1}}{k_L\zeta_{\tau}+\zeta_{\gamma_n/2}+\delta}\geq 1-2\frac{\zeta_{\gamma_n/2}+\delta+\zeta_{\tau}^{-1}}{k_L\zeta_{\tau}}.
\end{align*}
We conclude the proof by noting that by the assumption $\zeta_{\gamma_n/2}=o(\zeta_\tau)$ the left hand side of the preceding display takes the form $1+o(1)$.

Finally we show that for the nonzero signals belonging to the sets $\cS$ or $\cM$ with probability tending to one $\kappa_i(\tau)=o(1)$ holds, hence they will not be discovered. To show this note that in view of the proof of assertion \eqref{eq: margcovM} we have that with probability tending to one $|Y_i|\leq k\zeta_\tau$ for $|\theta_{0,i}|\leq k_M\zeta_\tau$ with $k_M<k<1$. Therefore following from Lemma \ref{LemmaBoundsPostMeanVariance} (iii) we have with probability tending to one that
\begin{align*}
\frac{|\hat\theta_i(\tau)|}{|Y_i|}\leq \frac{\tau|Y_i| e^{Y_i^2/2}}{|Y_i|}\leq \tau^{1-k^2}=o(1),
\end{align*}
concluding the proof.
\end{proof}

\textbf{Proof of Theorem \ref{thm: modelselect:adapt}}
\begin{proof} 
The proof for the MMLE empirical Bayes method closely resembles the proof of Theorem \ref{thm: modelselect:credible} with plugging in $\hat\tau_n$ for $\tau$. 

Hence it remains to deal with the hierarchical Bayes method. For the first and third statement note that in view of the proof of Theorem \ref{thm: modelselect:credible} we have that
\begin{align*}
\Pi(\theta_i<0 \mid Y_i)
&\geq \Pi(\tau\in[1/n,5t_n] \given Y_i) \sup_{\tau\in[1/n,5t_n]}\Pi(\lambda_i\leq M \given Y_i,\tau ) \Phi\Big(-\frac{M\tau}{\sqrt{1+M^2\tau^2}}Y_i\Big),
\end{align*}
with $M$ as specified in the proof of Theorem \ref{thm: modelselect:credible}.  On the right hand side the first two terms tend to one, while the third to $1/2$, and hence the left hand side is larger than $\alpha$ (for any $\alpha<1/2$), concluding the proof of the statement as in  the proof of Theorem \ref{thm: modelselect:credible}.

Finally we show that the model selection with the  hierarchical Bayes method selects with high probability a large fraction of the signals belonging to the set $\cL_a$. Again without loss of generality let us assume that $Y_i\geq 0$. First in view of the proof of  the hierarchical Bayes version of \eqref{eq: margcovL_a} we have that with probability tending to one, $|Y_i-\hat\theta_i|\rightarrow 0$. Therefore note that for arbitrary $L\geq 1$ with probability tending to one at least $1-\gamma_n$ fraction of the index set $\cL$ satisfies that the interval $[\hat\theta_i-Y_i/(2L),\hat\theta_i+Y_i/(2L)]$ contains every interval $[\hat\theta_i(\tau,\lambda_i)-z_{\alpha/4}r_i(\lambda_i,\tau),\hat\theta_i(\tau,\lambda_i)+z_{\alpha/4}r_i(\lambda_i,\tau)]$ for $\tau\in[1/n,5t_n]$ and $\lambda_i\geq g_{\tau}/\tau$ (for any $g_\tau \to \infty$ as $\t \to 0$), since similarly to the non-adaptive case $Y_i\geq (k_L+o(1))\zeta_{1/n}$, $r_i(\lambda_i,\tau)=O(1)$ and $Y_i(1+o(1))\leq\hat\theta_i(\tau,\lambda_i)\leq Y_i$. Therefore,
\begin{align*}
&\Pi(\theta_i:\, \theta_i\in [\hat\theta_i-Y_i/(2L),\hat\theta_i+Y_i/(2L)] \given Y_i)\\
&\quad\geq \int_{1/n}^{5t_n} \int_{g_{\tau}/\tau}^{\infty}  \Pi(\theta_i:\, \theta_i\in [\hat\theta_i(\tau,\lambda_i)-z_{\alpha/4}r_i(\lambda_i,\tau),\hat\theta_i(\tau,\lambda_i)+z_{\alpha/4}r_i(\lambda_i,\tau)] \given Y_i, \lambda_i, \t)\\
&\qquad\qquad\qquad\qquad\qquad\times \pi(\lambda_i \given Y_i,\tau)\pi(\tau \given Y_i)d\lambda_id\tau\\
&\quad\geq \int_{1/n}^{5t_n} \int_{g_{\tau}/\tau}^{\infty}  (1-\alpha/2) \pi(\lambda_i \given Y_i,\tau)\pi(\tau \given Y_i)d\lambda_id\tau=1-\alpha/2+o(1)>1-\alpha.
\end{align*}
So we can conclude that $0\notin \hat{C}_{ni}(L)$.

\end{proof}

\section{Proofs for the coverage of the credible balls in the deterministic case}

\subsection{Proof of Lemma \ref{lem: LBradius}}\label{Sec: lem: LBradius} 
\begin{proof}
The square radius $\hat r^2(\a,\t)$ is defined as the upper $\a$-quantile of the
variable $W=\|\th-\hat\th(\t)\|_2^2$ relative to its posterior distribution given $(Y^n,\t)$,
where $\hat\th(\t)=\E(\th\given Y^n,\t)$. By Chebyshev's inequality the variable
$W$ falls below $\E(W\given Y^n,\t)-c \sd(W\given Y^n,\t)$ with conditional probability given $(Y^n,\t)$ smaller than $1/c^2$  
for any given $c>0$. 
This implies that $\hat r^2(\a,\t)\ge \E(W\given Y^n,\t)-c \sd(W\given Y^n,\t)$ for  $c>0$ such that $1/c^2\le 1-\a$.
Thus it suffices to show that $\E(W\given Y^n,\t)\ge 0.501 n\t\z_\t$ and $\sd(W\given Y^n,\t)\ll n\t\z_\t$, with probability
tending to 1. Here the conditional expectations $\E(W\given Y^n,\t)$ and $\sd(W\given Y^n,\t)$ refer to the
posterior distribution of $\th$ given $(Y^n,\t)$ (where $W$ is a function of $\th$), which are 
functions of $Y^n$ that will be considered under the law of $Y^n$ following the true parameter. The variable 
$W=\sumin \bigl(\th_i-\hat\th_i(\t)\bigr)^2$ is lower bounded by 
the sum of squares $W_0$ of the variables $\th_i-\hat\th_i(\t)$ corresponding to the
indices with $\th_{0,i}=0$, which are $(n-p_n)\sim n$ of the coordinates.  The upper $\a$-quantile of $W$ is 
bigger than the upper $\a$-quantile of $W_0$, and hence
it suffices to derive a lower bound for the latter. For simplicity of notation we assume that all $n$ parameters $\th_{0,i}$ are zero
and write $W$ for $W_0$.

Because given $\t$ the coordinates are independent under the posterior distribution,
\begin{align*}
\E(W\given Y^n,\t)&=\sumin \E\bigl[\bigl(\th_i-\hat\th_i(\t)\bigr)^2\given Y_i,\t\bigr]=\sumin\var(\th_i\given Y_i,\t),\\
\var(W\given Y^n,\t)&=\sumin \var\bigl[\bigl(\th_i-\hat\th_i(\t)\bigr)^2\given Y_i,\t\bigr]
\le \sumin \E\bigl[\bigl(\th_i-\hat\th_i(\t)\bigr)^4\given Y_i,\t\bigr].
\end{align*}
Because the variables $Y_i$ are i.i.d.\ under the true distribution, Lemma~\ref{LemmaMomentsOfThetaGivenY} below gives that 
\begin{align*}
\E_0\E(W\given Y^n,\t)&\sim (2/\pi)^{3/2} n\t\z_\t,\\
\var_0\E(W\given Y^n,\t)&\lesssim n\t\z_\t,\\
\E_0\var(W\given Y^n,\t)&\lesssim n\t\z_\t^3.
\end{align*}
From the first two assertions and another application of Chebyshev's inequality, now with respect to the true law of 
$Y^n$, it follows that for any $c_n\ra\infty$ the probability of the event 
 $\E(W\given Y^n,\t)\le (2/\pi)^{3/2}\, n\t\z_\t-c_n\sqrt{n\t\z_\t}$
tends to zero. Since $\sqrt{n\t\z_\t}\ll n\t\z_\t$ (easily) under the assumption that $n\t/\z_\t\ra\infty$ and $(2/\pi)^{3/2}\approx 0.507$,
it follows that $\E(W\given Y^n,\t)$ is lower bounded by $0.5 n\t\z_\t$ with probability tending to one.
By Markov's inequality the probability of the event $\sd(W\given Y^n,\t)\ge c_nn\t\z_\t$ is bounded
{above} by $(c_nn\t\z_\t)^{-2}\E_0\var(W\given Y^n,\t)$, which is {further} bounded above by
$(c_nn\t\z_\t)^{-2}n\t\z_\t^3$, by the third assertion in the display. This tends to zero for some $c_n\ra 0$,
again by the assumption that $n\t/\z_\t\ra\infty$ (tightly this time).
\end{proof}

For the proof of  Lemma~\ref{lem: LBradius}, we have employed the lemma below,
which is based on the following observations.
The posterior density of $\th_i$ given $(Y_i=y,\t)$ is (for fixed $\t$) an exponential family with density
$$\th\mapsto \frac{\phi(y-\th)g_\t(\th)}{\psi_\t(y)}=c_\t(y) e^{\th y} g_\t(\th) e^{-\th^2/2},$$
where $g_\t$ is the prior density of $\th$, and $\psi_\t$ is the
Bayesian marginal density of $Y_i$, given in \eqref{Eqpsitau}, and the norming constant is given by
$$c_\t(y)=\frac{\phi(y)}{\psi_\t(y)}=\frac{\pi}{\t I_{-1/2}(y)},$$
for the function $I_{-1/2}(y)$ defined in \eqref{eq:def.Ik}. The cumulant moment generating function
$z\mapsto \log \E (e^{z\th_i}\given Y_i=y,\t)$ of the family is given by 
$z\mapsto\log \bigl(c_\t(y)/c_\t(y+z)\bigr)$, which is $z\mapsto \log I_{-1/2}(y+z)$ plus an additive constant
independent of $z$. We conclude that the first, second and fourth cumulants are given by
\begin{align}
\hat\th_i(\t)=\E(\th_i\given Y_i=y,\t)&=\frac{d}{dy}\log I_{-1/2}(y),\nonumber\\
\var(\th_i\given Y_i=y,\t)&=\frac{d^2}{dy^2}\log I_{-1/2}(y),
\label{EqPosteriorCumulants}\\
\E\bigl[\bigl(\th_i-\hat\th_i(\t)\bigr)^4\given Y_i=y,\t\bigr]-3 \var(\th_i\given Y_i=y,\t)^2
&=\frac{d^4}{dy^4}\log I_{-1/2}(y).\nonumber
\end{align}
The derivatives at the right side can be computed by repeatedly using the product and sum rule together with the
identity $I_k'(y)=y I_{k+1}(y)$, for $I_k$ as in \eqref{eq:def.Ik}.

\begin{lemma}
\label{LemmaMomentsOfThetaGivenY}
For $\E_0$ referring to the distribution of $Y_i\sim N(0,1)$, as $\t\ra 0$,
\begin{align*}
\frac{4C^{-1}\t\z_\t}{\pi\sqrt{2\pi}}\lesssim\E_0 \inf_{t\in[C^{-1}\t,C\t]}\var(\th_i\given Y_i,t)&\lesssim \E_0 \sup_{t\in[C^{-1}\t,C\t]}\var(\th_i\given Y_i,t)\lesssim  \frac{4C\t\z_\t}{\pi\sqrt{2\pi}},\\
\E_0 \sup_{t\in[C^{-1}\t,C\t]}\var(\th_i\given Y_i,t)^2&\lesssim\t\z_\t,\\
\E_0 \sup_{t\in[C^{-1}\t,C\t]}\E\bigl[\bigl(\th_i-\hat\th_i(t)\bigr)^4\given Y_i,t\bigr]&\lesssim \t\z_\t^3.
\end{align*}
\end{lemma}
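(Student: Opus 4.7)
The plan is to express the conditional cumulants in \eqref{EqPosteriorCumulants} as explicit rational functions of $I_k(y)$ for $k \in \{-1/2,1/2,3/2,5/2,7/2\}$, obtain uniform pointwise bounds on these functions in $y$ and $t$, and finally integrate against $\phi(y)$. Applying the product rule repeatedly together with $I_k'(y) = y I_{k+1}(y)$ gives
$$\var(\th_i \given Y_i = y, t) = \frac{I_{1/2}(y)}{I_{-1/2}(y)} + y^2\,\frac{I_{3/2}(y)}{I_{-1/2}(y)} - y^2\Bigl(\frac{I_{1/2}(y)}{I_{-1/2}(y)}\Bigr)^2,$$
and an analogous (longer) rational expression for the fourth central moment. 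The task is thus reduced to estimating ratios $I_{k+1}(y)/I_k(y)$.

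First I would derive sharp two-sided bounds on $I_k(y)$, separating two regimes. When $|y| \ll \z_t$ the exponential factor $e^{y^2 z/2}$ is essentially harmless and the integral is dominated by the singular factor $[t^2+(1-t^2)z]^{-1}$, giving $I_k(y) \asymp \log(1/t)$ for $k = -1/2$ and comparable quantities for higher $k$. When $|y| \gtrsim \z_t$, substituting $z = 1 - u$ and applying Laplace's method shows that the integral concentrates near $z=1$, yielding $I_k(y) \asymp y^{-2} e^{y^2/2}$ uniformly in $k$, with $I_{k+1}(y)/I_k(y) = 1 + O(1/y^2)$. These estimates are smooth in $t$, so the suprema and infima over $t \in [C^{-1}\t, C\t]$ only contribute multiplicative constants depending on $C$.

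Feeding these bounds into the formulas for $\var$ and the fourth central moment gives pointwise estimates of the form $\var(\th_i\given Y_i=y,t) \lesssim t\,(1 \vee |y| e^{y^2/2})$ in the large-$y$ regime, and of order $t/\log(1/t)$ in the small-$y$ regime, with a matching lower bound of the same scale. Integrating the upper bound against $\phi(y)$, one splits $\int = \int_{|y|\le \z_\t} + \int_{|y|>\z_\t}$ and uses the cancellation $e^{y^2/2}\phi(y) \asymp 1$: the first region contributes $\lesssim t\z_\t$ (the factor $\z_\t$ coming from the length of integration), and the tail contributes $\lesssim t$. The lower bound $\E_0\inf_t \var \gtrsim t\z_\t$ follows from evaluating the same calculation on a strip $|y| \in [c\z_\t, \z_\t]$, on which the prefactor is bounded away from zero uniformly in $t \in [C^{-1}\t, C\t]$. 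For the second inequality one squares the pointwise bound; the extra $e^{y^2/2}$ factor is absorbed once more by $\phi$, at the cost of an additional $\z_\t$ in the tail estimate — but $\var \le 1$ a priori, so one uses the pointwise bound $\var^2 \le \var$ to retain the same order $t\z_\t$. For the fourth moment, the rational expression involves an extra factor of $y^4$, which after the same integration argument yields $\z_\t^3$ instead of $\z_\t$.

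The main obstacle is the sharp constant and lower bound in the first display. Getting $\var$ (rather than merely its expectation) bounded below pointwise requires tracking that the cross term $y^2(I_{1/2}/I_{-1/2})^2$ does not cancel the leading contribution $y^2 I_{3/2}/I_{-1/2}$; equivalently, one must show that the ``effective prior'' on $\th$ given $y$ has nontrivial spread on the relevant scale. This is handled by the Laplace-type expansion in the large-$y$ regime, where $I_{3/2}/I_{-1/2} - (I_{1/2}/I_{-1/2})^2 = O(1/y^2)$ with a positive leading coefficient. Uniformity over $t \in [C^{-1}\t, C\t]$ then follows since all expansions are continuous in $\log t$.
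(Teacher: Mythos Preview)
Your overall strategy matches the paper's: express the cumulants via \eqref{EqPosteriorCumulants} as rational functions of the $I_k$, obtain asymptotics for the $I_k$, split the $y$-integration into regimes, and integrate against $\phi$. The paper also handles the sup/inf over $t\in[C^{-1}\t,C\t]$ by monotonicity in $t$ of the main term, which is the careful version of your ``smooth in $t$'' remark. However, several details in your sketch are wrong and would cause the argument to fail as written.

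\textbf{Asymptotics of $I_{-1/2}$.} For $|y|$ bounded (or $|y|\ll\z_t$) one has $I_{-1/2}(y)\sim \pi/t$, not $\log(1/t)$; see Lemma~\ref{LemmaI-1/2}. The singularity of $z^{-1/2}/(t^2+(1-t^2)z)$ near $z=0$ is of order $t^{-1}$, not logarithmic. This feeds through to your small-$y$ variance estimate: at $y=0$ the variance is $I_{1/2}(0)/I_{-1/2}(0)\asymp t$, not $t/\log(1/t)$. More importantly, in the intermediate range the correct leading behaviour is $\var(\th_i\given y,t)\sim (2t/\pi)e^{y^2/2}$ (no extra $|y|$ factor), which is exactly what integrates to the constant $(2/\pi)^{3/2}\,t\z_t=4t\z_t/(\pi\sqrt{2\pi})$.

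\textbf{The two-regime split is too coarse.} The pointwise bound $\var\lesssim t\,e^{y^2/2}$ exceeds $1$ once $|y|>\z_t$, so it cannot be integrated over the tail; your claim that ``the tail contributes $\lesssim t$'' does not follow from that bound. The paper introduces the threshold $\k_\t$ solving $e^{\k_\t^2/2}/(\k_\t^2/2)=1/\t$, uses \eqref{EqHulpLemma41} as an upper bound on $[\z_\t,\k_\t]$, and on $[\k_\t,\infty)$ switches to the expansion \eqref{EqHulpLemma41Two} in terms of $\d_\t(y)=(\pi/\t)/H_{-1/2}(y)$, which captures the transition from $I_{-1/2}\sim\pi/\t$ to $I_{-1/2}\sim 2e^{y^2/2}/y^2$. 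Without this, neither the first nor the second assertion can be closed, and in particular your squared bound for the second assertion (before invoking $\var^2\le\var$) would already diverge.

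\textbf{Fourth moment.} The leading term of the fourth cumulant in the intermediate range is $\asymp t\,y^2e^{y^2/2}$ (one extra factor of $y^2$ over the variance $\asymp t\,e^{y^2/2}$, not $y^4$); this is what integrates to $t\z_\t^3$. The paper computes the full expression $(\log I_{-1/2})^{(4)}$ involving $I_{7/2}$ and verifies, using $I_k\le I_{-1/2}$ for $k\ge-1/2$, that the other terms do not exceed this order. An ``extra $y^4$'' would integrate to $t\z_\t^5$, so the cancellation coming from the ratio $I_k/I_{-1/2}\asymp 1/y^2$ must be made explicit.
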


\begin{proof}
The first assertion is already contained in \cite{vdPas}, but we give a new proof, which also
prepares for the proofs of the other assertions.

Since $(\log h)''=h''/h-(h'/h)^2$, for any function $h$,  and $I_{-1/2}'(y)=y I_{1/2}(y)$ and $I_{-1/2}''(y)=y^2I_{3/2}(y)+I_{1/2}(y)$, we
have by the formulas preceding the lemma,
\begin{equation}
\label{EqPosteriorVarianceExpression}
\var(\th_i\given Y_i=y,\t)=y^2\Bigl[\frac{I_{3/2}}{I_{-1/2}}-\Bigl(\frac{I_{1/2}}{I_{-1/2}}\Bigr)^2\Bigr](y)+\frac{I_{1/2}}{I_{-1/2}}(y).
\end{equation}
By Lemmas~\ref{LemmaI-1/2} and~\ref{LemmaI1/23/2} the right side is equivalent, uniformly in $y$, to
\begin{align*}
&y^2\Bigl[\frac{H_{3/2}(y)}{\pi/\t+H_{-1/2}(y)}\bigl(1+O(\sqrt\t)\bigr)
-\frac{H_{1/2}^2(y)}{\bigl(\pi/\t+H_{-1/2}(y)\bigr)^2}\bigl(1+O(\sqrt\t)\bigr)\Bigr]\\
&\qquad\qquad\qquad\qquad\qquad\qquad+\frac{H_{1/2}(y)}{\pi/\t+H_{-1/2}(y)}\bigl(1+O(\sqrt\t)\bigr),
\end{align*}
where $H_k(y)=(y^2/2)^{-k}\int_c^{y^2/2}v^{k-1}e^v\,dv$, with $c=0$ if $k>0$ and $c=1$ otherwise. Uniformly in $y\ge 1/\e_\t\ra\infty$, all functions
$H_k$ can be expanded as $H_k(y) =e^{y^2/2}/(y^2/2)(1+O(1/y^2))$, by Lemma~\ref{LemmaIncompleteGamma}.

Let $\k_\t$ be the solution to $e^{\k_\t^2/2}/(\k_\t^2/2)=1/\t$. For $y\ll \k_\t$ the factor $\pi/\t$ dominates the factor
$H_{-1/2}(y)$ and the preceding display can be approximated by
\begin{equation}
\label{EqHulpLemma41}
\frac{\t}{\pi}y^2 H_{3/2}(y)-\frac{\t^2}{\pi^2}y^2 H_{1/2}^2(y)+\frac{\t}{\pi}H_{1/2}(y).
\end{equation}
For instance, we can use this approximation on $[0,\z_\t]$, up to a uniform $1+o(1)$-term,
since $e^{-\z_\t^2/2}/\z_\t^2\ll 1/\t$.
A multiple of the preceding display, with the negative term removed,
is an upper bound for $\var(\th_i\given Y_i,\t)$ for any $y$; we use this for  $y\in [\z_\t,\k_\t]$. 
For $y{\geq}\k_\t$ the factor $H_{-1/2}(y)$ dominates $\pi/\t$ and the second to last display can be rewritten as,
for $\d_\t(y)=(\pi/\t )/H_{-1/2}(y)$,
\begin{align}
&y^2\Bigl[\frac{1+O(y^{-2})}{1+\d_\t(y)}(1+o(1))
-\frac{1+O(y^{-2})}{(1+\d_\t(y))^2}(1+o(1))\Bigr]
+\frac{1+O(y^{-2})}{1+\d_\t(y)}(1+o(1))\nonumber\\
&\qquad\qquad\qquad \qquad\qquad
=\frac{y^2\d_\t(y)}{(1+\d_\t(y))^2}+r_\t(y),\label{EqHulpLemma41Two}
\end{align}
where $r_\t(y)$ is uniformly bounded in $y\ge\k_\t$ as $\t\ra0$.

We can choose  $\e_{\t/C}\ra 0$ slow enough that 
$$\E_0 \sup_{t\in[C^{-1}\t,C\t]} \var(\th_i\given Y_i,t)\1_{0\le |Y_i|\le 1/\e_t}\lesssim C\t\int_ 0^{1/\e_{\t/C}} \bigl[y^2  H_{3/2}(y)+H_{1/2}(y)\bigr]\phi(y)\,dy$$
is of smaller order than $\t\z_\t$. Then this part of the expectation is negligible. For $1/\e_{t}\le |y| \le \z_{t}$,
we expand the functions $H_k$ in \eqref{EqHulpLemma41} and find that
\begin{align}
&\E_0 \sup_{t\in[C^{-1}\t,C\t]}  \var(\th_i\given Y_i, t)\1_{1/\e_t\le |Y_i|\le \z_\t}\nonumber\\
&\qquad\lesssim 2 \int_ {0}^{\infty}\sup_{t\in[C^{-1}\t,C\t]}\1_{1/\e_t\le |y|\le \z_\t}\bigl[(2\t/\pi) e^{y^2/2}\nonumber\\
&\qquad\qquad\qquad\qquad\qquad-(2\t/\pi)^2e^{y^2}/y^2+(2\t/\pi) e^{y^2/2}/y^2\bigr]\phi(y)\,dy.\label{eq: helpUBvar1}
\end{align}
We note that the integrand is non-negative and its derivative with respect to $t$ is also non-negative for every $1/\eps_{C\t}\leq y\leq \z_{\t/C}$ and $t\leq C\t$, i.e.
\begin{align*}
(2/\pi) e^{y^2/2}-(8\t/\pi^2)e^{y^2}/y^2+(2/\pi) e^{y^2/2}/y^2>0,
\end{align*}
since $e^{y^2/2}\leq C/\t$ and $y^2\rightarrow\infty$.
Therefore, we can further bound the right hand side of $\eqref{eq: helpUBvar1}$ as
\begin{align*}
2 \int_{\e_{C\t}^{-1}}^{\z_{\t/C}}\bigl[(2C\t/\pi) e^{y^2/2}
-(2C\t/\pi)^2e^{y^2}/y^2+(2C\t/\pi) e^{y^2/2}/y^2\bigr]\phi(y)\asymp{\frac{2C\sqrt{2}}{\pi\sqrt{\pi}}}\t\z_\t.
\end{align*}
Similar computations also lead to
\begin{align*}
\E_0 \inf_{t\in[C^{-1}\t,C\t]}  \var(\th_i\given Y_i, t)\1_{1/\e_t\le |Y_i|\le \z_\t}\gtrsim {\frac{2C\sqrt{2}}{\pi\sqrt{\pi}}}\t\z_\t.
\end{align*}

For $y\in [\z_\t,\k_\t]$ we again use \eqref{EqHulpLemma41}, but as an upper bound (without the negative term), and obtain
$$\E_0 \sup_{t\in[C^{-1}\t,C\t]} \var(\th_i\given Y_i,t)\1_{\z_\t\le |Y_i|\le \k_\t}\lesssim C\t \int_{\z_{C\t}}^{\k_{C^{-1}\t}} e^{y^2/2}\phi(y)\,dy\lesssim \t(\k_\t-\z_\t),$$
which is of lower order than the preceding display.
By \eqref{EqHulpLemma41Two} the contribution of $y\ge \k_\t$ is bounded by
\begin{align*}
\E_0 \sup_{t\in[C^{-1}\t,C\t]}\var(\th_i\given Y_i,t)\1_{\k_\t\le |Y_i|}&\lesssim \int_{\k_{\t/C}}^\infty [y^2\d_{\t/C}(y)+1]\phi(y)\,dy\\
&\lesssim \int_{\k_{\t/C}}^\infty[C\t^{-1}y^4e^{-y^2}+e^{-y^2/2}]\,dy\\
&\lesssim\t^{-1}\k_\t^3 e^{-\k_\t^2}+\k_\t^{-1}e^{-\k_\t^2/2}=O(\t/\k_\t).
\end{align*}
This concludes the proof of the first assertion.

For the proof of the second assertion we follow the same approach. We simply square the integrands in the
preceding bounds and obtain a negligible contribution from the interval $[0,1/\e_\t]$,
a contribution  bounded by $C^2\t^2\int_0^{\k_{\t/C}}e^{y^2}\,\phi(y)\,dy\lesssim \t^2e^{\k_{\t/C}^2/2}/\k_\t\lesssim \t\z_\t$ 
from the interval $[1/\e_\t,\k_\t]$ and a contribution no bigger than a multiple of 
$$\int_{\k_{C\t}}^\infty [y^4\d_{\t/C}^2(y)+1]\phi(y)\,dy\lesssim \int_{\k_{C\t}}^\infty [C^2\t^{-2}y^8e^{-3y^2/2}+e^{-y^2/2}]\,dy\lesssim \t\k_\t$$
from the interval $[\k_\t,\infty)$.

For the proof of the third assertion it suffices to bound the
fourth cumulant of $\th_i$ given $(Y_i,\t)$, in view of the second assertion. For any function $h$ we have
$$(\log h)''''=\frac{h''''}{h}-4\frac{h'''}{h}\frac{h'}h+12\frac{h''}h\Bigl(\frac{h'}h\Bigr)^2
-3\Bigl(\frac{h''}h\Bigr)^2-6\Bigl(\frac{h'}h\Bigr)^4.$$
Combined with the formulas for $I_{-1/2}'$ and $I_{-1/2}''$ given before as well as
$I_{-1/2}'''(y)=y^3I_{5/2}(y)+3yI_{3/2}(y)$ and $I_{-1/2}''''(y)=y^4I_{7/2}(y)+6y^2I_{5/2}(y)+3I_{3/2}(y)$, we find that
the fourth cumulant can be written in the form
\begin{align*}
&\frac{y^4I_{7/2}(y)+6y^2I_{5/2}(y)+3I_{3/2}(y)}{I_{-1/2}(y)}
-4\frac{y^3I_{5/2}(y)+3yI_{3/2}(y)}{I_{-1/2}(y)}\frac{y I_{1/2}(y)}{I_{-1/2}(y)}\\
&\qquad+12\frac{y^2I_{3/2}(y)+I_{1/2}(y)}{I_{-1/2}(y)}\Bigl(\frac {yI_{1/2}(y)}{I_{-1/2}(y)}\Bigr)^2
-3\Bigl(\frac{y^2I_{3/2}(y)+I_{1/2}(y)}{I_{-1/2}(y)}\Bigr)^2
-6 \Bigl(\frac {yI_{1/2}(y)}{I_{-1/2}(y)}\Bigr)^4.
\end{align*}
As before we expand these expressions with the help of Lemmas~\ref{LemmaI-1/2} and~\ref{LemmaI1/23/2},
and next integrate separately over $[0,1/\e_{C^{-1}\t}]$, $[1/\e_{C\t},{2}\k_{\t/C}]$, and $[{2}\k_{C\t},\infty)$. 
The first interval gives a negligible contribution. Following from the inequality  $I_{-1/2}(y)\geq I_k(y)$ for $k\geq -1/2$ and Lemma \ref{LemmaIncompleteGamma} one can obtain that the dominating term in the second interval is  $C\t y^2e^{y^/2}$. This leads to 
$$\int_0^{\k_{\t/C}}\sup_{t\in [C^{-1}\t_n,C\t_n]} t y^2e^{y^2/2}\phi(y)\,dy\lesssim C\t\int_0^{\k_{\t/C}} y^2\,dy\lesssim\t\z_\t^3$$
On the last interval
\begin{align*}
\int_{y\geq 2\k_{C\t}}y^4 e^{-y^2/2}\lesssim \k_{\t}^{11}\t^{4}=o(\t\z_\t^3).
\end{align*}

\end{proof}

\section{Proofs for the MMLE}

\subsection{Proof of Lemma \ref{thm: LB_tau}}\label{sec: thm: LB_tau}
Given $\th_0$ that satisfies the excessive-bias restriction, let $\tilde \z=A\sqrt{2\log (n/q)}$ and 
$\tilde p=\#\bigl(i: |\th_{0,i}|\ge\tilde \z\bigr)$, for $q$ as in \eqref{condition: EB}. Then 
$q/C_s\le \tilde p\le p =\#\bigl(i: \th_{0,i}\neq 0\bigr)\le p_n$, which is $o(n)$ by assumption, so
that $\tilde\z\ra\infty$, uniformly in $\th_0$.

Take any $\d_n\da 0$ and $A_1\in (A^{-1},1)$ and for given $\t$ split the set of indices $1,\ldots,n$ into
$I_{2}:=\{i: |Y_i|\ge A_1\tilde \z\}$,  $I_{0}=\{i\notin I_{2}: |\th_{0,i}|\le \d_n\z_\t^{-2}\}$,
 and $I_{1}=I_2^c\cap I_0^c$ the remaining indices. Since $|Y_i|\ge |\th_{0,i}|-|\varepsilon_i|$, we have that
$i\in I_{2}$ as soon as $|\th_{0,i}|\ge\tilde\z$ and $|\varepsilon_i|<(1-A_1)\tilde\z$. 
By definition there exist $\tilde p$ coordinates with $|\th_{0,i}|\ge\tilde\z$, and the
number of the corresponding variables $|\varepsilon_i|$ that fall below $(1-A_1)\tilde\z$ is a
binomial variable on $\tilde p$ trials and success probability tending to one, as $(1-A_1)\tilde\z\ra\infty$.
By Chebyshev's inequality it follows that with probability tending to one the cardinality 
of $I_{2}$ is at least $\tilde p/2$ (easily). By the excessive-bias restriction
$$\d_n^2\z_\t^{-4}\#\bigl(i: \d_n\z_\t^{-2}<|\th_{0,i}|< \tilde\z\bigr)
\le \sum_{i: |\th_{0,i}|<\tilde\z}\th_{0,i}^2 \lesssim q\log (n/q) \le C_s\tilde p\log (ne/(C_s\tilde p)).$$
This shows that the number of elements of $I_1$ with $|\th_{0,i}|<\tilde\z$ is bounded above by a multiple of
$\d_n^{-2}\z_\t^4\tilde p\log (ne/(C_s\tilde p))$. The number of $\th_{0,i}$ with 
$|\th_{0,i}|\ge\tilde\z$ is $\tilde p$ by definition, which is smaller than the preceding number
if $\d_n$ tends to zero sufficiently slowly and $\z_\t$ is bounded away from 0. In that case the
cardinality of $I_1$ is bounded above by $\d_n^{-2}\z_\t^4\tilde p\log (ne/(C_s\tilde p))$.
Since the indices of all zero coordinates are contained in $I_0$, 
the cardinality of $I_{1}$ is also trivially bounded from above by $p$.

By Lemma~\ref{lem: diff_like} the derivative of the log-likelihood can be written in the form
\begin{align}
\label{EqDerivativeM}
\frac{d}{d\t}M_\t(Y^n)&=\frac1\t\sum_{i\in I_{0}}m_\t(Y_i)+\frac1\t\sum_{i\in I_{1}}m_\t(Y_i)+\frac1\t\sum_{i \in I_{2}}m_\t(Y_i)\\
&\ge-\frac{C_e}{\z_\t}n-|I_1|+|I_2| C\Bigl(\frac1\t\wedge \frac{e^{A_1^2\tilde\z^2/2}}{A_1^2\tilde\z^2}\Bigr),
\nonumber
\end{align}
with probability tending to 1, uniformly in $\t\in [1/n,\eta_n]$ and any $\eta_n\da 0$,
for constants $C_e,C>0$. This follows 
by applying Proposition~\ref{prop:Em(Y)} together with Lemma~\ref{lem: uniform0} to the first sum,
Lemma~\ref{Lemma: tech1}(ii) and the monotonicity of $y\mapsto m_\t(y)$ to the second,
and Lemma~\ref{Lemma: tech1}(vi) to the third sum.
The right side is certainly nonnegative for $\t$ such that the third term dominates twice the absolute
values of both the first and second terms. Since $|I_2|\ge \tilde p/2$ and $\tilde p\gtrsim q=n e^{-\tilde\z^2A^{-2}/2}$,
it follows that the right side is nonnegative if
$$\frac{n}{\z_\t}\lesssim \frac{\tilde p}\t,\qquad \frac{n}{\z_\t}\lesssim \frac{n e^{\tilde\z^2(A_1^2-A^{-2})/2}}{\tilde\z^2},
\qquad |I_1|\lesssim \frac{\tilde p}\t,\qquad |I_1|\lesssim \frac{n e^{\tilde\z^2(A_1^2-A^{-2})/2}}{\tilde\z^2},$$
where the multiplicative constants must be sufficiently small.
The first inequality is satisfied for $\t\lesssim \t_n(\tilde p)$; the second is trivial 
since $A_1>A^{-1}$ and $\tilde\z\ra\infty$, and $\z_\t^{-1}\ra0$; 
the third can be reduced to
$\t\z_\t^4\lesssim \d_n^{2}/\log (ne/(C_s\tilde p))$, which is (easily) verified if 
$\t\lesssim \t_n(\tilde p)$ and $\d_n$ tends to zero sufficiently slowly;
the fourth is trivial since $|I_1|\le p\ll n$ and $A_1>A^{-1}$  and $\tilde\z\ra\infty$.
It follows that $\t\mapsto M_\t(Y^n)$ is increasing for $\t\lesssim \t_n(\tilde p)$ and hence $\mmle\gtrsim\t_n(\tilde p)$.

For the proof of the upper bound we use the same decomposition \eqref{EqDerivativeM},
but redefine the sets $I_k$ slightly, to 
$I_0=\{i: |\th_{0,i}|\leq \d_n/\z_\t^2\}$, $I_1=\{\d_n/\z_\t^2\leq |\th_{0,i}|\leq \z_\t/4\}$ 
and $I_2=I_0^c\cap I_1^c$. Reasoning as before, using the excessive-bias restriction, we see that the cardinalities
of the sets $I_1$ and $I_2$ are bounded by multiples of $\d_n^{-2}\z_{\t}^4 \tilde{p}\log (ne/(C_s\tilde p))$ 
and $\z_{\t}^{-2}\tilde{p}\log (ne/(C_s\tilde p))+ \tilde p$, respectively. 
By the decomposition \eqref{EqDerivativeM} we obtain, 
\begin{equation}\frac{d}{d\t}M_\t(Y^n)\lesssim 
-\frac{C_e}{\z_\t}{(n-p)}+\frac{|I_1|\t^{1/16}}{\t\z_\t}+o\Bigl(\frac{|I_1|\t^{1/32}}{\t\z_\t}\Bigr)
+\frac1\t |I_2| C_u,
\label{eq: UB_eb}
\end{equation}
with probability tending to 1, uniformly in $\t\in [1/n,\eta_n]$ and any $\eta_n\da 0$.
Here the upper bounds on the sums over the coordinates in $I_0$ and $I_1$ follow
with the help of the first and second parts of Proposition~\ref{prop:Em(Y)} and Lemma~\ref{lem: uniform0},
and the bound on the sum over the coordinates in $I_2$ follows from Lemma~\ref{Lemma: tech1}(i).
The right side is certainly negative for $\t$ such that $2\t^{-1}|I_2|C_u\le C_e(n-p)/\z_\t$ and 
$|I_1|\t^{1/32}/{\z_\t}\le C_u |I_2|$. The first reduces to
$\t\z_\t\gtrsim(\tilde{p}/n)\log (ne/(C_s\tilde p))$ and $\t/\z_\t\gtrsim \tilde p/n$ 
and hence is true for $\t\gtrsim \t_n(\tilde p)$;
the second reduces to $\t^{1/32}\z_\t^5\lesssim \d_n^2$ and is true as well provided $\d_n\da0$ slowly. 
Since we may assume that
$\mmle\in [1/n,\eta_n]$ for some $\eta_n\da0$ by 
Theorem 3.1 in  \cite{contractionpaper}, it follows in that case that $\mmle\lesssim \t_n(\tilde p)$.


\section{Proof for the adaptive credible sets}

\subsection{Proof of Theorem \ref{thm:coverage}}\label{Sec:coverage_adaptive}
To simplify notation set $T_n=[C^{-1}\tilde\t_n,C\tilde\t_n]$, where $\tilde\t_n = \t_n(\tilde p_n)$. 

First we deal with the empirical Bayes credible sets.
Since $\hat\t_n\in T_n$ with probability tending to one by Condition~\ref{cond.adapt.coverage.eb},
\begin{align*}
P_{\th_0}\big(\th_0\notin\hat{C}_n(\hat\t_n,L)\big)
&= P_{\th_0}\big( \|\th_0-\hat\th(\hat\t_n)\|_2> L\hat r(\a,\hat\t_n) \big)\\
&\leq P_{\th_0}\big( \sup_{\t\in T_n}\|\th_0-\hat\th(\t)\|_2> L\inf_{\t\in T_n} \hat r(\a,\t) \big)+o(1).
\end{align*}
By Lemma~\ref{lem: LBradiusAdapt} $\inf_{\t\in T_n} \hat r(\a,\t)\gtrsim\sqrt{n\tilde\t_n\z_{\tilde\t_n}}$, with probability
tending to one.
Therefore it suffices  to show that $\sup_{\t\in T_n}\|\th_0-\hat\th(\t)\|_2=O_P(\sqrt{n\tilde\t_n\z_{\tilde\t_n}})$.
We show this by bounding the second moment of this variable.

We split the sum in $\|\hat\th(\t)-\th_0\|_2^2=\sum_i(\hat\th_i(\t)-\th_{0,i})^2$ in two parts,
according to the values of $\th_{0,i}$. Set $\tilde\z=A\sqrt{2\log(n/q)}$, for $q$ as in \eqref{condition: EB}.

If $|\th_{0,i}|\geq \z_{\tilde\t_n}/5$, then
we first use Lemma~\ref{LemmaBoundsPostMeanVariance}(ii) together
with the triangle inequality to see that $|\hat\th_i(\t)-\th_{0,i}|\lesssim \z_\t+ |Y_i-\th_{0,i}|$, as $\t\ra0$, whence 
$$\E_{\th_{0,i}}\sup_{\t\in T_n}(\th_{0,i}-\hat\th_{i}(\t))^2\lesssim \sup_{\t\in T_n}\z_\t^2+\var_{\th_{0,i}}Y_i\lesssim \z_{\tilde\t_n}^2.$$
By the excessive-bias restriction
$$\frac{\z_{\tilde\t_n}^2}{25}\,\Bigl|\Bigl\{i: \frac{\z_{\tilde\t_n}}5<|\th_{0,i}|< \tilde\z\Bigr\}\Bigr|
\le \sum_{i: |\th_{0,i}|\le \tilde\z}\th_{0,i}^2 \lesssim q\log (n/q)\lesssim\tilde p\log (ne/(C_s\tilde p)).$$
Since $\log(ne/(C_s\tilde p))/\z_{\tilde\t_n}^2\ra 1$, it follows that there are fewer than a constant times $\tilde p$ 
parameters with $|\th_{0,i}|\geq \z_{\tilde\t_n}/5$ and hence their total contribution to the sum is
bounded by $\tilde p \z_{\tilde\t_n}^2$.

For parameters such that $|\th_{0,i}|\leq \z_{\tilde\t_n}/5$ we use the triangle inequality
$|\hat\th_i(\t)-\th_{0,i}|\le |\hat\th_i(\t)|+|\th_{0,i}|$,  and next further bound $|\hat\th_i(\t)|$ by $\t |Y_i|e^{Y_i^2/2}$
in case $|Y_i-\th_{0,i}|\le \z_{\tilde\t_n}$, which is valid in view of Lemma~\ref{LemmaBoundsPostMeanVariance} (iii),
and further bound $|\hat\th_i(\t)|\le |Y_i|$ by $|Y_i-\th_{0,i}|+|\th_{0,i}|$, otherwise. This gives
\begin{align*}
\E_{\th_{0,i}}\!\sup_{\t\in T_n}|\th_{0,i}-\hat\th_i(\t)|^2
&\lesssim \E_{\th_{0,i}}\tilde\t_n^2|Y_i|^2e^{Y_i^2}\1_{|Y_i-\th_{0,i}|\le \z_{\tilde\t_n}}\\
&\qquad\qquad\qquad+\E_{\th_{0,i}}|Y_i-\th_{0,i}|^2\1_{|Y_i-\th_{0,i}|> \z_{\tilde\t_n}}+\th_{0,i}^2.
\end{align*}
The second expectation on the right is bounded above by $\tilde\t_n\z_{\tilde\t_n} $. 
The first expectation on the right is equal to 
$\t^2\int_{-\z_\t}^{\z_\t}(y+\th)^2 e^{(y+\th)^2}\phi(y)\,dy\lesssim \t^2\z_\t^2 \int_0^{\z_\t}e^{\th^2+2y|\th|} e^{y^2/2}\,dy$, 
for $\t=\tilde\t_n$ and $\th=\th_{0,i}$.
For $|\th|\lesssim \z_\t^{-1}$, the exponential factor $e^{\th^2+2y|\th|}$ is uniformly bounded, and the
whole expression is bounded by a multiple of $\t^2\z_\t^2\int_0^{\z_\t}e^{y^2/2}\,dy\lesssim\t\z_\t$.
For $|\th|\gtrsim \z_\t^{-1}$, but $|\th|\le \z_\t/5$, the exponential factor is bounded above by $e^{\z_\t^2/25+2\z_\t^2/5}=\t^{-22/25}$
and the whole expression is bounded above by $\t^{3/22}\z_\t\lesssim \th^2$. Thus in both cases
the first equation is bounded above by a multiple of  $\tilde\t_n\z_{\tilde\t_n}+\th_{0,i}^2$.

Combining the above two cases we find 
\begin{align*}
\E_{\th_0}\sup_{\t\in T_n}\|\hat\th(\t)-\th_{0}\|_2^2\lesssim 
\tilde{p}\z_{\tilde\t_n}^2 + n\tilde\t_n \z_{\tilde\t_n}+\sum_{i: |\th_{0,i}|<\z_{\tilde\t_n}/5}\th_{0,i}^2.
\end{align*}
Since $\z_{\tilde\t_n}^2\sim \log (n/\tilde p)\le \log (ne/(C_sq))\sim \log (n/q)$ and $1/5<1$,
the last term is bounded above by a multiple of $q\log (n/q)\lesssim \tilde p\log (n/\tilde p)$ 
by the excessive-bias restriction, whence the
whole expression is bounded above $n\tilde\t_n \z_{\tilde\t_n}\asymp \tilde p\log(n/\tilde p)$.
This concludes the proof of the coverage of the empirical Bayes credible balls.

The proof of their rate-adaptive size follows along the same lines.

Next we deal with the hierarchical Bayes credible sets.  By Lemma~\ref{Lem:hyperLB} and the triangle inequality
\begin{align*}
P_{\th_0}\bigl(\th_0\notin\hat{C}_n(L)\bigr)
&\le P_{\th_0} \bigl(\|\th_0-\hat\th\|_2>L \hat r(\a)\bigr)\\
&\leq P_{\th_0} \big(\|\th_0-\hat\th(\tilde\t_n)\|_2+ \|\hat\th-\hat\th(\tilde\t_n)\|_2> L A \sqrt{n\z_{\tilde\t_n}\tilde\t_n} \big)+o(1).
\end{align*}
The proof for the empirical Bayes set as just given shows that
$\|\th_0-\hat\th(\tilde\t_n)\|_2=O_P(\sqrt{n\tilde\t_n\z_{\tilde\t_n}})$. Therefore,  it is sufficient to show that
$\|\hat\th-\hat\th(\t_n)\|_2=O_P(\sqrt{n\tilde\t_n\z_{\tilde\t_n}})$.
Since $\hat\th=\int \hat\th(\t)\,\pi(\t\given Y_i)\,d\t$, Jensen's inequality gives
\begin{align}
\|\hat\th-\hat\th(\tilde\t_n)\|_2^2&\leq \int_{1/n}^{1} \|\hat\th(\t)-\hat\th(\tilde\t_n)\|_2^2\pi(\t \given Y^n)\,d\t\nonumber\\
&\leq \sup_{\t\in T_n}\|\hat\th(\t)-\hat\th(\tilde\t_n)\|_2^2+
\sup_{\t\in[1/n,1]}\|\hat\th(\t)-\hat\th(\tilde\t_n)\|_2^2\,\Pi(\t\notin T_n \given Y^n).\label{eq: hulp01b}
\end{align}
The first term on the right hand side is bounded from above by
$4\sup_{\t\in T_n}\|\hat\th(\t)-\th_0\|_2^2$, and was already seen to be $O_P(n\tilde\t_n\z_{\tilde\t_n})$.
By the triangle inequality and Lemma~\ref{LemmaBoundsPostMeanVariance} (i)+(ii)
the second supremum on the right hand side is bounded by 
\begin{align*}
4\sup_{\t\in[1/n,1]}\|\hat\th(\t)-Y^{n}\|_2^2
&\leq 4n\sup_{\t\in[1/n,1]} \z_{\t}^2\lesssim n\log n.
\end{align*}
By Lemma \ref{lem:hyperLB} 
we can choose the constant $C$ in the definition of $T_n$ such that $\Pi(\t\notin T_n \given Y^n )\leq e^{-c_3 \tilde{p}}$,
for a constant $c_3>0$. For $\tilde{p}\ge (2/c_3)\log n$ the probability $\Pi(\t\notin T_n \given Y^n )$
is of the order $n^{-2}$, and the second term on the right hand side of \eqref{eq: hulp01b} is negligible.

\section{Lemmas supporting the coverage results}

\begin{lemma}\label{lem: uniform1}
For $\t\geq 1/n$ and $Y^n\sim N_n(0,I_n)$, set $H_n(\t)=\E(\|\th-\hat\th(\t)\|_2^2 \given \t,Y^n )=\sum_{i=1}^n\var(\th\given \t,Y_i)$.
Then for any $C>0$, as $\t\ra 0$,
\begin{align*}
\sup_{t\in[C^{-1}\t,C\t]}\frac{1}{n\t\z_{\t}}\Big|H_n(t)-\E_{0}H_n(t) \Big|\stackrel{P}{\rightarrow}0.
\end{align*}
\end{lemma}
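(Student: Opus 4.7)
The plan is to combine a pointwise Chebyshev concentration bound with a modulus-of-continuity bound on the random function $t\mapsto H_n(t)$ over the interval $[C^{-1}\t,C\t]$. Write $g_t(y):=\var(\th\mid Y_i=y,t)$ so that $H_n(t)=\sum_{i=1}^n g_t(Y_i)$. By the second assertion of Lemma~\ref{LemmaMomentsOfThetaGivenY},
\begin{align*}
\var_0\bigl(g_t(Y_1)\bigr)\le \E_0\sup_{s\in[C^{-1}\t,C\t]}g_s(Y_1)^2\lesssim \t\z_\t,
\end{align*}
uniformly in $t\in[C^{-1}\t,C\t]$. Hence by Chebyshev's inequality and independence of the $Y_i$, for any fixed $t$ and $\alpha>0$,
\begin{align*}
P_0\bigl(|H_n(t)-\E_0 H_n(t)|>\alpha n\t\z_\t\bigr)\le \frac{n\var_0(g_t(Y_1))}{\alpha^2 n^2\t^2\z_\t^2}\lesssim \frac{1}{\alpha^2 n\t\z_\t}\to 0,
\end{align*}
since $n\t\z_\t\to\infty$ under the standing assumption $n\t/\z_\t\to\infty$ inherited from Lemma~\ref{lem: LBradius}.

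To upgrade this pointwise bound to uniformity, I would cover $[C^{-1}\t,C\t]$ by a grid $t_0<t_1<\cdots<t_N$ of mesh $\d=\t/(\log n)^2$, so $N\lesssim (\log n)^2$. A union bound with the pointwise estimate gives $\max_k|H_n(t_k)-\E_0 H_n(t_k)|=o_P(n\t\z_\t)$, provided $n\t\z_\t\gg (\log n)^2$; this last requirement follows from $n\t/\z_\t\to\infty$ combined with $\z_\t^2\asymp \log(1/\t)\lesssim \log n$.

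For the oscillation $O_n:=\sup_{|t-s|\le \d,\,t,s\in[C^{-1}\t,C\t]}|H_n(t)-H_n(s)|$, I would apply the mean-value theorem. Differentiating the explicit formula \eqref{EqPosteriorVarianceExpression} for $g_t(y)$ in $t$ and using $I_k'(y)=yI_{k+1}(y)$, one expects, by a three-regime analysis on the intervals $|y|\le 1/\eps_\t$, $1/\eps_\t\le |y|\le \k_\t$, $|y|\ge \k_\t$ that parallels the proof of Lemma~\ref{LemmaMomentsOfThetaGivenY},
\begin{align*}
\E_0\sup_{t\in[C^{-1}\t,C\t]}|\partial_t g_t(Y_1)|\lesssim \z_\t.
\end{align*}
The heuristic behind this bound is that $g_t(y)$ has size $\asymp t\z_t$ on the interval, so its $t$-derivative has size of order $\z_\t$. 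Markov's inequality then gives $O_n\le \d\sum_{i=1}^n\sup_t|\partial_t g_t(Y_i)|=O_P(n\d\z_\t)=o_P(n\t\z_\t)$, since $\d/\t=(\log n)^{-2}\to 0$. The same bound applied in expectation controls the analogous oscillation of the deterministic function $t\mapsto \E_0 H_n(t)$. Combining the grid estimate and the two oscillation bounds via the triangle inequality completes the proof.

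The main obstacle will be establishing the derivative estimate $\E_0\sup_t|\partial_t g_t(Y_1)|\lesssim \z_\t$, whose verification requires an expansion analysis close in spirit to, but somewhat more intricate than, the one underpinning Lemma~\ref{LemmaMomentsOfThetaGivenY}. If the explicit derivative becomes unwieldy, a safe fallback is to replace the Lipschitz argument by a Kolmogorov--Chentsov-type maximal inequality based on an $L^2$-modulus-of-continuity bound of the form $\E_0\bigl(g_t(Y_1)-g_s(Y_1)\bigr)^2\lesssim (t-s)^2\z_\t/\t$, which yields a comparable oscillation estimate through chaining.
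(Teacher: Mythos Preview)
Your proposal is essentially correct but takes a different route from the paper, and there is one small slip.

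\textbf{Comparison with the paper.} The paper uses a maximal inequality (Corollary~2.2.5 of \cite{vdVW}) together with an $L^2$ modulus-of-continuity estimate $\E_0\bigl(g_{\t_1}(Y_1)-g_{\t_2}(Y_1)\bigr)^2\lesssim (\t_2-\t_1)^2\z_{\t_1}^2/\t_1$, proved separately as Lemma~\ref{lem: metric2} via the same three-regime expansion as Lemma~\ref{LemmaMomentsOfThetaGivenY}. This is exactly your fallback (up to an extra factor $\z_\t$ in the bound, which is immaterial for the entropy integral). Your main route---pointwise Chebyshev on a grid plus an $L^1$ Lipschitz bound $\E_0\sup_t|\partial_t g_t(Y_1)|\lesssim\z_\t$---is more elementary in that it avoids chaining, and the derivative bound is indeed true: on $|y|\le\z_\t$ the dominant contribution to $\partial_t g_t(y)$ is $(2/\pi)e^{y^2/2}$, which integrates against $\phi$ to $O(\z_\t)$; the regions $[\z_\t,\k_\t]$ and $[\k_\t,\infty)$ contribute $O(1)$ and $O(\k_\t^{-1})$ respectively by the same computations as in Lemma~\ref{LemmaMomentsOfThetaGivenY}. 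So both approaches require essentially the same analytic work; the paper's chaining is slightly cleaner to state, while your grid argument avoids invoking an external maximal inequality.

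\textbf{The slip.} Your justification that $n\t\z_\t\gg(\log n)^2$ ``follows from $n\t/\z_\t\to\infty$ combined with $\z_\t^2\lesssim\log n$'' is incorrect: writing $n\t\z_\t=(n\t/\z_\t)\z_\t^2$, the bound $\z_\t^2\le 2\log n$ is an \emph{upper} bound and gives no lower control. For instance if $\t\asymp(\log\log n)\sqrt{\log n}/n$ then $n\t/\z_\t\asymp\log\log n\to\infty$ but $n\t\z_\t\asymp(\log n)(\log\log n)\not\gg(\log n)^2$. The fix is trivial: choose the mesh $\d=\t a_n$ with any $a_n\to 0$ satisfying $1/a_n=o(n\t\z_\t)$, which exists since $n\t\z_\t\to\infty$. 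Also, your heuristic ``$g_t(y)$ has size $\asymp t\z_t$'' is misleading (that is the size of $\E_0 g_t(Y_1)$, not of $g_t(y)$ itself, which can be $O(1)$ near $y=\z_\t$), although the conclusion of the derivative bound is correct for the reasons above.
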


\begin{proof}
Set $T=[C^{-1}\t,C\t]$. In view of Corollary 2.2.5 of \cite{vdVW}
(applied with $\psi(x)=x^2$) it is sufficient to show that $\Var_{0} \big(H_n(t)/n\t\z_{\t}\big)\rightarrow 0$ for some
$t\in T$, and
\begin{align}\label{eq: entropy2}
\int_0^{\diamm}\sqrt{N(\eps, T,d_n)}\,d\eps=o(1),
\end{align}
where $d_n$ is the intrinsic metric defined by its square
$d_n^2(\t_1,\t_2)=(n\t\z_{\t})^{-2}\Var_{0}\bigl(H_n(\t_1)-H_n(\t_2)\bigr)$, 
$\diamm$ is the diameter of the interval $T$ with respect to the metric $d_n$, and
$N(\eps,A,d_n)$ is the covering number of the set $A$ with $\eps$ radius balls with respect to the
metric $d_n$. 

In view of Lemma~\ref{lem: LBradius},  
$$\Var_{0} \big(H_n(\t)/(n\t\z_{\t})\big) \lesssim (n\t\z_\t)^{-1}\ra0.$$
Combining this with the triangle inequality and the fact that $\t\z_\t\asymp t\z_t$ for every $t\in T$,
we also see that the diameter $\diamm$ is bounded from above by a multiple of $1/(n\t\z_\t)^{1/2}$.

Since $d_n(\t_1,\t_2)\lesssim  |\t_2-\t_1|\t^{-3/2}n^{-1/2}$, by Lemma~\ref{lem: metric2},
the covering number of the interval
$T$ with balls of radius $\eps$ is bounded by a multiple of $\eps^{-1}/(n\t)^{1/2} $ . 
Hence the integral of the entropy is bounded by
$$\int_0^{\diamm}\sqrt{N(\eps, T,d_n)}\,d\eps\lesssim (n\t)^{-1/4} \int_0^{1/(n\t\z_\t)^{1/2}}\eps^{-1/2}\,d\eps
\lesssim (n\t)^{-1/2}\z_{\t}^{-1/4}\rightarrow 0. $$
\end{proof}

\begin{lemma}\label{lem: metric2}
For  $Y_i\sim N(0,1)$, and $1/n\leq \t_1<\t_2\leq 1/2$, 
\begin{align*}
\E_0 \left(\Var(\theta_i\given Y_i,\t_1)-\Var(\theta_i\given Y_i,\t_2)\right)^2
\lesssim (\t_2-\t_1)^2 \t_1^{-1}\z_{\t_1}^2.
\end{align*}
\end{lemma}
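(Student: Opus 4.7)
The plan is to write the difference as an integral of the $\t$-derivative, apply Cauchy--Schwarz and Fubini, and reduce to bounding $\E_0(\partial_\t v(Y_i,\t))^2$, where $v(y,\t):=\var(\th_i\given Y_i=y,\t)$. For each fixed $y$, writing $v(y,\t_2)-v(y,\t_1)=\int_{\t_1}^{\t_2}\partial_s v(y,s)\,ds$, squaring, applying Cauchy--Schwarz, and then $\E_0$ via Fubini gives
\[
\E_0\bigl(v(Y_i,\t_2)-v(Y_i,\t_1)\bigr)^2\le(\t_2-\t_1)\int_{\t_1}^{\t_2}\E_0(\partial_s v(Y_i,s))^2\,ds.
\]
It therefore suffices to prove that $\E_0(\partial_\t v(Y_i,\t))^2\lesssim \t^{-1}\z_\t^2$ uniformly in $\t\in[\t_1,\t_2]$: since $s\mapsto s^{-1}\z_s^2$ is decreasing on $(0,1)$, the displayed right-hand side is then at most $(\t_2-\t_1)^2\,\t_1^{-1}\z_{\t_1}^2$.

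To compute $\partial_\t v$, start from the formula \eqref{EqPosteriorVarianceExpression} for $v(y,\t)$ in terms of $I_{-1/2}(y), I_{1/2}(y), I_{3/2}(y)$. Differentiating under the integral defining $I_k$ yields $\partial_\t I_k(y)=-2\t D_k(y,\t)$, where
\[
D_k(y,\t):=\int_0^1 z^k\,\frac{1-z}{[\t^2+(1-\t^2)z]^2}\,e^{y^2z/2}\,dz.
\]
Applying the quotient rule then gives $\partial_\t v(y,\t)$ as an explicit rational combination of the $I_k$ and $D_k$ (for $k\in\{-1/2,1/2,3/2\}$), multiplied by $\t$ and a polynomial in $y$.

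To control the $L^2(\E_0)$-norm of this expression I would follow the template of the proof of Lemma~\ref{LemmaMomentsOfThetaGivenY}: introduce approximants to the $D_k$ analogous to the $H_k$ approximants to $I_k$ using Lemma~\ref{LemmaIncompleteGamma}-style expansions, and partition the $y$-axis into $[0,1/\e_\t]$, $[1/\e_\t,\z_\t]$, $[\z_\t,\k_\t]$, $[\k_\t,\infty)$ (with $\k_\t$ solving $e^{\k_\t^2/2}/(\k_\t^2/2)=1/\t$). On each region, squaring the expression from the previous step and integrating against $\phi(y)$ produces the claimed bound. As a sanity check, the heuristic $|\partial_\t v|\sim v/\t$ combined with Lemma~\ref{LemmaMomentsOfThetaGivenY} predicts $\E_0(\partial_\t v)^2\sim \E_0 v^2/\t^2\lesssim \z_\t/\t$, even tighter than the required $\z_\t^2/\t$; the structural reason is that the squared denominator $[\t^2+(1-\t^2)z]^{-2}$ in $D_k$ behaves like $\t^{-2}$ relative to $I_k$ on the main range $z\gtrsim\t^2$, making $\t D_k\sim I_k/\t$.

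The main obstacle will be the algebraic bookkeeping required to carry out the quotient-rule differentiation cleanly and to track dominant terms in each $y$-region; no new analytic input is needed beyond the asymptotic expansions already developed for Lemma~\ref{LemmaMomentsOfThetaGivenY}, and the factor $\z_\t$ of slack in the heuristic scaling indicates that the computations should close comfortably.
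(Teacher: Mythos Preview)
Your reduction to bounding $\sup_{\t\in[\t_1,\t_2]}\E_0(\partial_\t v(Y_i,\t))^2$ is exactly what the paper does (it cites Lemma~\ref{Lem: Diff}, which is precisely your Cauchy--Schwarz/Fubini step packaged as a lemma), and your computation $\partial_\t I_k=-2\t D_k$ is correct; indeed the paper's $J_k$ satisfy $D_k=J_k-J_{k+1}$, so your $\t D_k\lesssim I_k/\t$ heuristic is the bound $J_k/I_{-1/2}\le \t^{-2}$ used there.

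Where the execution diverges is after the quotient-rule differentiation. You plan to build asymptotic expansions for the $D_k$ analogous to the $H_k$ and then do a four-region integration from scratch. The paper instead reorganises the derivative so that the function $m_\t(y)$ from \eqref{def: m} appears explicitly inside the bracket multiplying $\dot I_{-1/2}/I_{-1/2}$: after grouping, the expression reduces to a short sum whose square expectation is bounded by a constant times
\[
\t^2 + \t^2\E_0 Y_i^4 + \t^{-2}\E_0 m_\t(Y_i)^2 + \t^{-2}\E_0\Bigl[Y_i^4\,\frac{I_{1/2}^2}{I_{-1/2}^2}\Bigr].
\]
The first two terms are $O(1)$; the third is $o(\t^{-1}\z_\t^{-2})$ by the ready-made Lemma~\ref{lem: m2}; only the last term requires a (two-region, at $\k_\t$) integral computation, giving $\t^{-1}\z_\t$. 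So the paper recycles the $m_\t$ machinery from the companion paper and needs almost no new asymptotics, whereas your plan re-derives the same estimates in different coordinates. Both routes work and yield the same (in fact slightly sharper than stated, $\t^{-1}\z_\t$) bound; the paper's is shorter because the heavy lifting is already encoded in Lemmas~\ref{lem: metric} and~\ref{lem: m2}.
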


\begin{proof}
Differentiating the left side of \eqref{EqPosteriorVarianceExpression} with respect to $\t$ and
applying Lemma~\ref{Lem: Diff} we see that the left side of the lemma is bounded above
by $|\t_1-\t_2|^2$ times 
\begin{align*}
& \sup_{\t\in[\t_1,\t_2]}\!\!\!\E_0\Big[Y_i^2\frac{\dot I_{3/2}}{I_{-1/2}}-2Y_i^2 \frac{\dot I_{1/2} I_{1/2}}{I_{-1/2}^2}\! + \frac{\dot I_{1/2}}{I_{-1/2}} 
\!-  \frac{\dot I_{-1/2}}{I_{-1/2}} \Bigl[ Y_i^2  \frac{I_{3/2}}{I_{-1/2}}-2Y_i^2  \frac{I_{1/2}^2}{I_{-1/2}^2} +  \frac{I_{1/2}}{I_{-1/2}}   \Bigr]  \Bigr]^2\\
&=\!\sup_{\t\in[\t_1,\t_2]}\!\!\!\E_0\Big[Y_i^2\frac{\dot I_{3/2}}{I_{-1/2}}\!-2Y_i^2 \frac{\dot I_{1/2} I_{1/2}}{I_{-1/2}^2} \!+ \frac{\dot I_{1/2}}{I_{-1/2}} 
\!+ \frac{\dot I_{-1/2}}{I_{-1/2}}  \Bigl[m_\t(Y_i)+Y_i^2 \bigl[\frac{2I_{1/2}^2}{I_{-1/2}^2}-\frac{I_{1/2}}{I_{-1/2}} \bigr] \Bigr]\Bigr]^2,
 \end{align*}
in view of \eqref{def: m}. Here $\dot I_k$ denotes the partial derivative of $I_k$ with respect to $\t$,
and the argument $Y_i$ of $I_k$ and $\dot I_k$ has been omitted.
In view of Lemma~\ref{lem: metric} and the following properties shown in its proof:
\begin{align}
0&\leq\frac{J_{3/2}-J_{5/2}}{I_{-1/2}}\le \frac{J_{1/2}-J_{3/2}}{I_{-1/2}}\le \frac{J_{1/2}}{I_{-1/2}}\leq 4,\nonumber\\
0&\leq\frac{J_{-1/2}-J_{1/2}}{I_{-1/2}}\leq \frac{J_{-1/2}}{I_{-1/2}}\leq \frac1{\t^{2}},
\end{align} 
the right hand side of the preceding display is further bounded above by a multiple of
\begin{align*}
& \sup_{\t\in[\t_1,\t_2]}\Bigl[\t^2+ \t^2\E_0Y_i^4 + \t^{-2}\E_0m_\t(Y_i)^2+  \t^{-2}\E_0Y_i^4\frac{I_{1/2}^2}{I_{-1/2}^2} \Bigr].
 \end{align*}
The first two terms inside the square brackets are uniformly bounded, 
the third one is of order $o(\t^{-1}\z_{\t}^{-2})$ as $\t\ra0$ in view of Lemma~\ref{lem: m2}, 
and is uniformly bounded, by Lemma~\ref{Lem: asymp_m}.
It remains to deal with the last term. By Lemmas~\ref{LemmaI-1/2} and~\ref{LemmaI1/23/2} 
the quotient $I_{1/2}/I_{-1/2}$ is bounded by a constant for $|y|\ge \k_\t$,
and by a multiple of $\t e^{y^2/2}/y^2$, otherwise. Therefore, 
\begin{align*}
\int_{|y|\ge \k_\t}  y^4\frac{I_{1/2}^2}{I_{-1/2}^2}\phi(y)\,dy&\lesssim\int_{\k_\t}^\infty y^4 e^{-y^2/2}\,dy\lesssim e^{-\k_\t^2/2}\k_\t^{3}
\lesssim \t\z_\t,\\
\int_{|y| \leq \k_\t}  y^4\frac{I_{1/2}^2}{I_{-1/2}^2}\phi(y)\,dy&\lesssim 
\t^2 \int_0^{\k_\t}e^{y^2/2}\,dy\lesssim \t^2 \k_\t^{-1}e^{\k_\t^2/2}\lesssim \t\z_\t.
\end{align*}
This concludes the proof.
\end{proof}

\section{Results proven in the companion paper}
Some technical lemmas from \cite{contractionpaper} reappear in the proofs of the coverage results. We state the lemmas here for completeness. All proofs are given in \cite{contractionpaper}.

\begin{lemma}\label{lem:HBhyper}
If Conditions~\ref{cond.hyper.3} and~\ref{cond.hyper.1} hold, then 
$\inf_{\th_0\in\ell_0[p_n]}\E_{\th_0}\Pi(\t: \t\le 5 t_n \given Y^n)\ra 1.$ Furthermore, if only Conditions~\ref{cond.hyper.3} and~\ref{cond.hyper.1a} hold, then 
 the similar assertion  is true but with $5t_n$ replaced by $(\log n)t_n$.
\end{lemma}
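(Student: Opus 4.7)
The plan is to represent the posterior as the standard ratio
$$\Pi(\t>5t_n\given Y^n)=\frac{\int_{5t_n}^{1}e^{M_\t(Y^n)}\pi_n(\t)\,d\t}{\int_{1/n}^{1}e^{M_\t(Y^n)}\pi_n(\t)\,d\t},$$
and to produce a denominator lower bound from a ``sweet-spot'' window $[t_n/2,t_n]$ of $\t$ together with a numerator upper bound that reflects an exponentially decreasing likelihood for $\t\gtrsim t_n$. Everything must be uniform in $\th_0\in\ell_0[p_n]$.

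For the denominator, restrict integration to $[t_n/2,t_n]$. Condition~\ref{cond.hyper.1} lower-bounds the prior mass there by $e^{-cp_n}$ with $c<C_u/2$, while Condition~\ref{cond.hyper.1a} lower-bounds it by a multiple of $t_n$. To control $M_\t(Y^n)$ uniformly over this interval, use the derivative identity $\frac{d}{d\t}M_\t(Y^n)=\t^{-1}\sum_{i=1}^n m_\t(Y_i)$ from \eqref{EqDerivativeM}, splitting the sum according to whether $\th_{0,i}=0$ or not; the expectation bounds on $m_\t(Y_i)$ referenced in the proofs of Lemma~\ref{thm: LB_tau} (via Proposition~\ref{prop:Em(Y)} and Lemma~\ref{Lemma: tech1}), together with a uniform concentration statement for $\sum_{i:\th_{0,i}=0}m_\t(Y_i)$ over a range of $\t$ (as in Lemma~\ref{lem: uniform0}), yield $M_\t(Y^n)\ge M_{t_n}(Y^n)-C'p_n$ throughout $[t_n/2,t_n]$, with $P_{\th_0}$-probability tending to one.

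For the numerator, the heart of the argument is a strictly negative drift of $M_\t(Y^n)$ on $[t_n,\infty)$. Following the upper-bound computation in the proof of Lemma~\ref{thm: LB_tau} that leads to \eqref{eq: UB_eb}, one obtains for $\t\gtrsim t_n$ a bound of the form $\frac{d}{d\t}M_\t(Y^n)\le -c_1 n/(\t\z_\t^2)$, after the at most $p_n$ positive nonzero-coordinate contributions are absorbed. Integrating from $t_n$ to $\t\ge 5t_n$ gives $M_\t(Y^n)-M_{t_n}(Y^n)\le -c_2 p_n$ uniformly, where $c_2$ can be made as large as desired by choosing the factor $5$ large enough in the original threshold. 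Combining both bounds produces
$$\Pi(\t>5t_n\given Y^n)\lesssim \frac{e^{M_{t_n}(Y^n)-c_2 p_n}}{e^{-cp_n}\cdot t_n\cdot e^{M_{t_n}(Y^n)-C'p_n}}\lesssim t_n^{-1}\,e^{-(c_2-c-C')p_n},$$
which tends to zero once $c_2>c+C'$, proving the first assertion.

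For the weaker Condition~\ref{cond.hyper.1a}, the denominator's prior-mass lower bound is only a multiple of $t_n$, so the numerator must lose an extra polynomial-in-$n$ factor, i.e.\ the exponential decrease $e^{-c_2 p_n}$ must overwhelm $t_n^{-1}\asymp n/p_n$. When $p_n$ is of order smaller than $\log n$ this fails, so one pushes the threshold outward to $(\log n) t_n$ and integrates the negative drift over the longer interval $[t_n,(\log n)t_n]$; the resulting decrease is large enough to cover the $t_n^{-1}$ factor under Condition~\ref{cond.hyper.1a} alone. The principal technical obstacle is establishing the negative drift \emph{uniformly} in $\th_0\in\ell_0[p_n]$ and in $\t$ over the full range considered: this hinges on the fact that all but $p_n$ of the coordinates contribute with expected negative drift of the same sign, so that the sum of zero-coordinate contributions concentrates around an explicit negative quantity whose magnitude dominates the controlled $p_n$-worth of noise coming from the nonzero coordinates.
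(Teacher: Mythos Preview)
The paper does not supply its own proof of this lemma; it is one of the technical results explicitly deferred to the companion paper \cite{contractionpaper} (see the section ``Results proven in the companion paper''). Your overall strategy---writing the posterior as a ratio, lower-bounding the denominator by restricting to $[t_n/2,t_n]$ and invoking the prior-mass condition, and upper-bounding the numerator via a negative drift of $M_\t(Y^n)$ for $\t\gtrsim t_n$---is the natural one and matches the structure of the closely related Lemma~\ref{lem:hyperLB} that \emph{is} proved in the present paper.

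Two technical points need correction. First, your claimed form $\frac{d}{d\t}M_\t(Y^n)\le -c_1 n/(\t\z_\t^2)$ is not what the decomposition yields. Splitting into zero and nonzero coordinates (via Proposition~\ref{prop:Em(Y)} and Lemma~\ref{lem: uniform0} for the former, and the crude bound $m_\t\le C_u$ from Lemma~\ref{Lemma: tech1}(i) for the latter) gives an upper bound of the shape $-C_e(n-p_n)/\z_\t + C_u p_n/\t$, with no extra $1/\t$ in the dominant negative term. Integrating this correct bound over $[t_n,5t_n]$ still produces a drop of order $c_2 p_n$ with $c_2$ comfortably larger than $c+C'$, so your conclusion survives. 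Second, under Condition~\ref{cond.hyper.1} your displayed ratio carries a spurious $t_n^{-1}$: the prior-mass lower bound there is $e^{-cp_n}$, not $t_n e^{-cp_n}$, so the final bound is simply $e^{-(c_2-c-C')p_n}\to 0$. With the spurious $t_n^{-1}$ retained your bound would fail whenever $p_n=o(\log n)$, which is permitted. Under Condition~\ref{cond.hyper.1a} the $t_n^{-1}$ factor is genuine, and your remedy of pushing the threshold to $(\log n)t_n$ so that the integrated drift becomes of order $(\log n)p_n\gg\log(1/t_n)$ is correct (this uses $p_n\to\infty$). One further point to make explicit: the negative drift must hold uniformly over all of $[5t_n,1]$, which requires combining the $\t\to 0$ asymptotics from Lemma~\ref{lem: uniform0} with the statement $\sup_{\t\in[\eps,1]}\E_0 m_\t(Y)<0$ from Proposition~\ref{prop:Em(Y)} on the bounded-$\t$ range.
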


\begin{lemma}
\label{LemmaBoundsPostMeanVariance}
For $A>1$ and every $y\in \RR$,
\begin{itemize}
\item[(i)] $|\E(\th_i\given Y_i=y,\t)-y|\le 2 \z_\t^{-1}$, for $|y|\ge A\z_\t$, as $\t\ra0$.
\item[(ii)] $|\E(\th_i\given Y_i=y,\t)|\le |y|$.
\item[(iii)] $|\E(\th_i\given Y_i=y,\t)|\le \t |y| e^{y^2/2}$, as $\t\ra0$.
\item[(iv)] $|\var(\th_i\given Y_i=y,\t)- 1|\le \z_\t^{-2}$, for $|y|\ge A\z_\t$, as $\t\ra0$.
\item[(v)] $\var(\th_i\given Y_i=y,\t)\le 1+y^2$,
\item[(vi)] $\var (\th_i\given Y_i=y,\t)\lesssim \t e^{y^2/2}(y^{-2}\wedge 1)$, as $\t\ra0$.
\item[(vii)] $|\E(\th_i\given Y_i=y,\t)-y|\lesssim (\log |y|)/|y|$, uniformly in $\t\ge \t_0>0$ and $|y|\ra\infty$.
\end{itemize}
\end{lemma}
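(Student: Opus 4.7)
The plan is to use the exponential-family representation \eqref{EqPosteriorCumulants}, which expresses both the posterior mean and variance as ratios of the integrals $I_k(y)$ defined in \eqref{eq:def.Ik}. Concretely, $\hat\th_i(\t)=yI_{1/2}(y)/I_{-1/2}(y)$ and $\var(\th_i\given Y_i=y,\t)$ is given by \eqref{EqPosteriorVarianceExpression}. The pointwise monotonicity $I_{3/2}(y)\le I_{1/2}(y)\le I_{-1/2}(y)$, which follows from $z^{3/2}\le z^{1/2}\le z^{-1/2}$ on $(0,1)$, then gives (ii) and (v) at once: the mean ratio is bounded by $1$, and the variance is bounded by $y^2+1$.

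For (iii) and (vi) I would use the small-$\t$ lower bound $I_{-1/2}(y)\gtrsim 1/\t$, obtained by restricting the defining integral to $z\in(0,\t^2]$, where $1/(\t^2+(1-\t^2)z)\asymp\t^{-2}$ and $\int_0^{\t^2}z^{-1/2}\,dz\asymp\t$. Numerators are handled by pulling $e^{y^2/2}$ out of the integrand and estimating the residual; for (iii) the crude bound $I_{1/2}(y)\lesssim e^{y^2/2}$ suffices, so that $I_{1/2}/I_{-1/2}\lesssim\t e^{y^2/2}$. The sharper factor $y^{-2}\wedge 1$ in (vi) requires combining the lower bound on $I_{-1/2}$ with the Laplace-type estimate $I_k(y)\lesssim e^{y^2/2}/y^2$ for $|y|\ge 1$, obtained by integration by parts using the identity $I_k'(y)=yI_{k+1}(y)$.

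Parts (i), (iv), and (vii) concern the behaviour as $|y|\to\infty$. Here I would substitute $u=1-z$ and write
\[
I_k(y)=e^{y^2/2}\int_0^1(1-u)^k\,\frac{1}{1-(1-\t^2)u}\,e^{-y^2u/2}\,du,
\]
so that for large $|y|$ the measure $e^{-y^2u/2}\,du$ concentrates in a window of width $\sim 1/y^2$ around $u=0$. Taylor-expanding both $(1-u)^k$ and the denominator $1/(1-(1-\t^2)u)$ around $u=0$ shows that $I_{k+1}(y)/I_k(y)=1+O(1/y^2)$. Inserting this into $\hat\th_i(\t)=yI_{1/2}/I_{-1/2}$ yields (i), and inserting it into \eqref{EqPosteriorVarianceExpression} yields (iv), provided one exploits the leading-order cancellation in $y^2[I_{3/2}/I_{-1/2}-(I_{1/2}/I_{-1/2})^2]$ to improve the apparent $O(y^2)$ contribution to $O(1)$. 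Part (vii) follows from the same expansion with $\t$ bounded away from zero, the stated $\log|y|/|y|$ rate being a convenient weakening of the sharper $1/|y|$ rate produced by the first Laplace correction.

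The main obstacle is the rate in (iv): showing that the posterior variance tends to $1$ is easy, but obtaining the explicit rate $\z_\t^{-2}$ requires careful bookkeeping of the first- and second-order Laplace corrections to both $I_{1/2}/I_{-1/2}$ and $I_{3/2}/I_{-1/2}$, together with the Cauchy--Schwarz inequality $I_{-1/2}I_{3/2}\ge I_{1/2}^2$ (proved by symmetrising the double-integral representation and using $(z_1z_2)^{-1/2}(z_1-z_2)^2\ge 0$) to control the sign of the remainder after the leading cancellation.
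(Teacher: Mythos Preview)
The paper does not prove this lemma here; it is listed under ``Results proven in the companion paper'' with the remark that all proofs are given in \cite{contractionpaper}, so there is no in-paper argument to compare your proposal against directly. That said, the companion paper's toolkit is visible in Lemmas~\ref{LemmaI-1/2}, \ref{LemmaI1/23/2} and~\ref{LemmaIncompleteGamma} (also quoted here without proof), which give precise asymptotics for the integrals $I_k$ as $\t\to 0$ and as $|y|\to\infty$. Your plan --- expressing the posterior mean and variance through $I_k$ via \eqref{EqPosteriorCumulants} and \eqref{EqPosteriorVarianceExpression}, then reading off the various regimes from the behaviour of the $I_k$ --- is exactly the approach those auxiliary lemmas are designed to feed, and the monotonicity and Cauchy--Schwarz observations you make for (ii), (v) and the sign control in (iv) are the natural ingredients. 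So your outline is almost certainly aligned with the companion paper's argument.

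One concrete gap worth flagging: your plan for (vi) does not deliver the stated factor $(y^{-2}\wedge 1)$. Bounding the variance above by $y^2I_{3/2}/I_{-1/2}+I_{1/2}/I_{-1/2}$ and using $I_{-1/2}\gtrsim 1/\t$ together with $I_k(y)\lesssim e^{y^2/2}/y^2$ for $|y|\ge 1$ gives the first term of order $y^2\cdot\t\, e^{y^2/2}/y^2=\t e^{y^2/2}$, not $\t e^{y^2/2}/y^2$; and since $(I_{1/2}/I_{-1/2})^2$ is of smaller order $\t^2$ in the regime $|y|\ll\z_\t$, no cancellation rescues this. Either the companion paper uses a finer argument you have not outlined, or the factor $(y^{-2}\wedge 1)$ in the transcription of (vi) is a slip: note that the analogous bound for $m_\t$ in Lemma~\ref{Lem: asymp_m}(vii) \emph{does} carry that factor, precisely because $m_\t$ involves the difference $I_{1/2}-I_{3/2}$, which by Lemma~\ref{LemmaI1/23/2} has the extra decay $(1\wedge y^{-4})e^{y^2/2}$ that $I_{3/2}$ alone lacks.
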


\begin{lemma}\label{Lem: DominatingDensity}
If $f_1,f_2: [0,\infty)\ra[0,\infty)$ are probability densities such that  $f_2/f_1$ is monotonely increasing,
then, for any monotonely increasing function $h$,
$\E_{f_1} h(X)\leq \E_{f_2} h(X).$
\end{lemma}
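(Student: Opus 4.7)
The plan is to prove this via the classical argument that likelihood-ratio ordering implies first-order stochastic dominance, which in turn implies the desired inequality for every monotone $h$.

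First I would write the difference of expectations as a single integral:
\begin{equation*}
\E_{f_2} h(X) - \E_{f_1} h(X) = \int_0^\infty h(x)\bigl(f_2(x) - f_1(x)\bigr)\,dx = \int_0^\infty h(x)\bigl(g(x) - 1\bigr) f_1(x)\,dx,
\end{equation*}
where $g = f_2/f_1$ is the monotonely increasing ratio (defined where $f_1>0$; on the null set $\{f_1=0\}$ the contribution vanishes). Since $f_1$ and $f_2$ are both probability densities, $\int (g-1)f_1\,dx = 0$, so the function $g-1$ is necessarily nonpositive for small $x$ and nonnegative for large $x$.

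Next I would introduce a crossing point $x_0$ defined, say, as $x_0 = \inf\{x \ge 0: g(x) \ge 1\}$. By the monotonicity of $g$ we have $g(x) - 1 \le 0$ for $x < x_0$ and $g(x) - 1 \ge 0$ for $x \ge x_0$. By the monotonicity of $h$ we have $h(x) \le h(x_0)$ for $x < x_0$ and $h(x) \ge h(x_0)$ for $x \ge x_0$. In both cases the product $\bigl(h(x) - h(x_0)\bigr)\bigl(g(x) - 1\bigr)$ is nonnegative, so
\begin{equation*}
\int_0^\infty \bigl(h(x) - h(x_0)\bigr)\bigl(g(x) - 1\bigr) f_1(x)\,dx \ge 0.
\end{equation*}
Since $\int_0^\infty h(x_0)(g(x)-1)f_1(x)\,dx = h(x_0)\int_0^\infty (f_2-f_1)\,dx = 0$, adding this back gives $\E_{f_2}h(X) - \E_{f_1}h(X) \ge 0$, as required.

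There is no real obstacle here; the only small care needed is to handle degenerate cases, namely when $g \le 1$ everywhere or $g \ge 1$ everywhere (in which case $f_1 = f_2$ since both integrate to 1 and the claim is trivial) and to interpret $g$ sensibly where $f_1 = 0$ (on such a set $f_2$ must also vanish, else $g$ would have a jump to $+\infty$ that is compatible with monotonicity only if $f_2 = 0$ on the same region, up to a null set). If $h$ is not integrable against $f_1$ or $f_2$ we may interpret both sides in $[-\infty,\infty]$ and read the inequality accordingly; the sign argument above is unaffected.
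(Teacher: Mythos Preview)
Your argument is the standard and correct proof that likelihood-ratio ordering implies the comparison of expectations of monotone functions. The crossing-point argument is sound: monotonicity of $g=f_2/f_1$ guarantees that $g-1$ changes sign at most once, and combining this with monotonicity of $h$ gives the pointwise inequality $(h(x)-h(x_0))(g(x)-1)\ge 0$, from which the conclusion follows by integrating and using that $\int(g-1)f_1=0$. Your handling of the degenerate cases ($g\le 1$ or $g\ge 1$ everywhere, and the set $\{f_1=0\}$) is adequate.

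Note that the paper itself does not prove this lemma: it is listed under ``Results proven in the companion paper'' with the proof deferred to \cite{contractionpaper}. So there is no in-paper proof to compare against, but your argument is exactly the classical one and would be acceptable as a self-contained proof here.
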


\begin{lemma}\label{lem: diff_like}
The derivative of the log-likelihood function takes the form 
$\frac{d}{d\t}M_\t(y^n)=\frac1\t\sum_{j=1}^{n}m_\t(y_j).$
\end{lemma}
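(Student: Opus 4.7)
My plan is to reduce the claim to a single-observation identity and then exploit the scale-family structure of the horseshoe prior. Since the coordinates of $Y^n$ are independent given $\t$, we have $M_\t(y^n) = \sum_{j=1}^n \log \psi_\t(y_j)$ with $\psi_\t$ the Bayesian marginal \eqref{Eqpsitau}, so by the chain rule
\[
\frac{d}{d\t} M_\t(y^n) = \sum_{j=1}^n \frac{\partial_\t \psi_\t(y_j)}{\psi_\t(y_j)},
\]
and it suffices to prove the pointwise identity $\partial_\t \psi_\t(y)/\psi_\t(y) = m_\t(y)/\t$.

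To compute this derivative I would write $\th = \t\tilde\th$ where $\tilde\th$ has the horseshoe density at scale $1$ (call it $g_1$), so that $g_\t(\th) = g_1(\th/\t)/\t$ and
\[
\psi_\t(y) = \int \phi(y - \t\tilde\th)\, g_1(\tilde\th)\, d\tilde\th.
\]
Differentiating under the integral sign and using $\phi'(x) = -x\phi(x)$, then undoing the substitution and recognizing the posterior density $\pi(\th\given y,\t) = \phi(y-\th) g_\t(\th)/\psi_\t(y)$, produces the representation
\[
\frac{\partial_\t \psi_\t(y)}{\psi_\t(y)} = \frac{1}{\t}\int \th(y-\th)\,\pi(\th\given y,\t)\,d\th = \frac{1}{\t}\bigl[\hat\th_i(\t)(y - \hat\th_i(\t)) - \var(\th_i\given y,\t)\bigr].
\]

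To close the argument I would plug in the explicit cumulant formulas \eqref{EqPosteriorCumulants}, namely $\hat\th_i(\t) = y I_{1/2}(y)/I_{-1/2}(y)$ and $\var(\th_i\given y,\t) = y^2\bigl[I_{3/2}/I_{-1/2} - (I_{1/2}/I_{-1/2})^2\bigr] + I_{1/2}/I_{-1/2}$. Expanding the bracket $\hat\th_i(\t)(y - \hat\th_i(\t)) - \var(\th_i\given y,\t)$, the $(I_{1/2}/I_{-1/2})^2$ contributions cancel, and what remains is $y^2(I_{1/2}-I_{3/2})/I_{-1/2} - I_{1/2}/I_{-1/2}$, which is precisely $m_\t(y)$ by \eqref{def: m}. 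Summing over $j$ completes the proof.

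The only delicate point I expect is justifying differentiation under the integral sign, but this is routine: the Gaussian factor $\phi(y-\t\tilde\th)$ dominates any polynomial in $\tilde\th$ uniformly for $\t$ in compact subsets of $(0,\infty)$, so dominated convergence applies. A purely computational alternative would differentiate $\log I_{-1/2}(y)$ directly through \eqref{eq:def.Ik} using $\partial_\t [\t^2 + (1-\t^2)z]^{-1} = -2\t(1-z)/[\t^2+(1-\t^2)z]^2$ together with integration by parts in $z$; this works but is messier than the scale-family route above.
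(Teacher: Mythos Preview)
Your proof is correct. The scale-family trick $\th=\t\tilde\th$ gives the clean identity
\[
\frac{\partial_\t\psi_\t(y)}{\psi_\t(y)}=\frac1\t\bigl[\hat\th_i(\t)\bigl(y-\hat\th_i(\t)\bigr)-\var(\th_i\given y,\t)\bigr],
\]
and your algebraic reduction to $m_\t(y)$ via \eqref{EqPosteriorCumulants} and \eqref{EqPosteriorVarianceExpression} is accurate: the $(I_{1/2}/I_{-1/2})^2$ terms cancel exactly as you say, leaving $y^2(I_{1/2}-I_{3/2})/I_{-1/2}-I_{1/2}/I_{-1/2}=m_\t(y)$. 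The dominated-convergence step is fine; the only point worth tightening is that $g_1$ has a logarithmic pole at the origin, but since the integrand carries a factor $\tilde\th$ this is harmless, and the Cauchy-like tail $g_1(\tilde\th)\asymp\tilde\th^{-2}$ is killed by the Gaussian factor uniformly over $\t$ in compacta of $(0,\infty)$.

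The paper itself defers this proof to the companion article \cite{contractionpaper}, so there is no in-paper argument to compare against. The alternative you sketch---differentiating $I_{-1/2}$ in $\t$ directly via $\partial_\t[\t^2+(1-\t^2)z]^{-1}$---is in fact the route that naturally produces the auxiliary integrals $J_k$ (with the squared denominator) that appear later in the proof of Lemma~\ref{lem: metric2}, and is likely closer in spirit to the companion paper's computation. Your scale-family argument is shorter and more conceptual: it bypasses the $J_k$'s entirely by recognising $\partial_\t\log\psi_\t$ as a posterior moment, at the cost of requiring the cumulant identities \eqref{EqPosteriorCumulants} as input rather than deriving everything from \eqref{eq:def.Ik}.
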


\begin{proposition}\label{prop:Em(Y)}
Let $Y\sim N(\th,1)$. Then  $\sup_{\t\in [\eps,1]}\E_0 m_\t(Y)<0$ for every $\eps>0$, and as $\t\ra0$, $\E_\th m_\t(Y)= -\frac{2^{3/2}}{\pi^{3/2}}\,\frac{\t}{\z_\t}\bigl(1+o(1)\bigr)$ if $  |\th|=o(\z_{\t}^{-2})$, and $\E_\th m_\t(Y)= o (\t^{1/16}\z_{\t}^{-1})$ if $|\th|\leq \z_\t/4$.
\end{proposition}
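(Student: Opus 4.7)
The plan is to analyze $m_\t(y)$ using the expansions of $I_k(y)$ for small $\t$ from Lemmas~\ref{LemmaI-1/2} and~\ref{LemmaI1/23/2}, together with the incomplete-gamma asymptotics of Lemma~\ref{LemmaIncompleteGamma}. The starting point is the identity $m_\t(y) = \t \cdot \frac{d}{d\t}\log \psi_\t(y) = 1 + \t\,\partial_\t I_{-1/2}(y)/I_{-1/2}(y)$ from Lemma~\ref{lem: diff_like} applied to a single $y$, which combined with $I_{-1/2}(y) \sim \pi/\t + H_{-1/2}(y)$ gives the pointwise approximation $m_\t(y) \approx (\t/\pi) H_{-1/2}(y)$ on $|y| \ll \k_\t$, where $\k_\t$ solves $e^{\k_\t^2/2}/(\k_\t^2/2)=1/\t$; on $|y|\ge \k_\t$ one has only $m_\t(y) = O(1)$ uniformly.

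For the leading asymptotic in the case $|\th|=o(\z_\t^{-2})$, I would evaluate $\E_\th m_\t(Y) = \int m_\t(y)\phi(y-\th)\,dy$ by splitting into the bulk $|y|\le \z_\t$, the intermediate region $\z_\t\le |y|\le \k_\t$, and the tail $|y|\ge\k_\t$. In the intermediate region $H_{-1/2}(y) = (2e^{y^2/2}/y^2)(1+O(1/y^2))$ by Lemma~\ref{LemmaIncompleteGamma}, so the integrand becomes $(2\t/(\pi\sqrt{2\pi}\, y^2))\,e^{y\th-\th^2/2}$; under the assumption on $\th$ the shift factor equals $1+o(1/\z_\t)$ uniformly on $|y|\le \k_\t$, and integrating $y^{-2}$ from $\z_\t$ to $\k_\t$ produces a contribution of order $\t/\z_\t$. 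The exact coefficient $-2^{3/2}/\pi^{3/2}$ emerges only after combining this with the bulk contribution $(\t/\pi)\int_{|y|\le\z_\t} H_{-1/2}(y)\phi(y)\,dy$, which is of the same order $\t$ but opposite sign, and tracking the cancellation between these two pieces and the error terms in the expansion of $I_{-1/2}$ carefully down to the $\t/\z_\t$ remainder; the normalization $(\t/\pi)\int I_{-1/2}(y)\phi(y)\,dy = 1$, equivalent to $\int\psi_\t\,dy=1$, serves as an internal consistency check. For the coarser bound when $|\th|\le\z_\t/4$, the same decomposition is used but with the shift factor $\phi(y-\th)/\phi(y) = \exp(y\th-\th^2/2)$ now potentially as large as $\exp(\k_\t\z_\t/4)=O(\t^{-1/8})$ on $|y|\le\k_\t$; splitting at an intermediate cutoff $|y|\le \z_\t^{1-\delta}$ for small $\delta$ to balance the integration volume against this exponential blow-up yields the exponent $1/16$.

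For the non-asymptotic statement $\sup_{\t\in[\eps,1]} \E_0 m_\t(Y) < 0$, I would first observe that $\t\mapsto \E_0 m_\t(Y)$ is continuous on $(0,1]$ by dominated convergence (using Lemma~\ref{LemmaBoundsPostMeanVariance} and direct estimates on $I_k$ to bound $m_\t$ and $\partial_\t m_\t$ uniformly on compact subintervals), so the supremum on $[\eps,1]$ is attained. Strict negativity at each $\t\in(0,1]$ follows from the score identity $\int \psi_\t m_\t\,dy = \t\,\frac{d}{d\t}\int\psi_\t\,dy = 0$ together with $\E_0 m_\t(Y) = -\t\,\frac{d}{d\t}KL(\phi\|\psi_\t)$, combined with the fact that $KL(\phi\|\psi_\t)$ equals zero at $\t=0$ (since $\psi_0=\phi$) and is strictly positive for $\t>0$, so its derivative is positive throughout $(0,1]$. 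The main obstacle is pinning down the precise leading constant $2^{3/2}/\pi^{3/2}$, since the cancellation between bulk and intermediate contributions occurs at order $\t$, an order larger than the remaining $\t/\z_\t$, and requires consistent tracking of the sub-leading corrections in the expansions of $I_{-1/2}$ and $H_k$ across the transition $|y|\sim \k_\t$; a secondary technical difficulty is verifying the $1/16$ exponent in the coarse bound via the trade-off between integration volume and the exponential shift factor.
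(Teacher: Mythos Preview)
The paper does not prove this proposition here---it is imported from the companion paper \cite{contractionpaper}---so there is no in-paper argument to compare against. Evaluating your sketch on its own merits, two genuine gaps stand out.

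First, your argument for $\sup_{\t\in[\eps,1]}\E_0 m_\t(Y)<0$ is invalid. From $KL(\phi\|\psi_0)=0$ and $KL(\phi\|\psi_\t)>0$ for $\t>0$ you infer that $\frac{d}{d\t}KL(\phi\|\psi_\t)>0$ on all of $(0,1]$; this does not follow, since a nonnegative function vanishing at the left endpoint can perfectly well have interior critical points. You need a separate monotonicity argument for $\t\mapsto KL(\phi\|\psi_\t)$, or a direct sign analysis of $\int m_\t(y)\phi(y)\,dy$ that does not pass through the KL derivative.

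Second, your pointwise approximation $m_\t(y)\approx(\t/\pi)H_{-1/2}(y)$ is obtained by differentiating the expansion of Lemma~\ref{LemmaI-1/2} in $\t$. That lemma bounds the remainder $R_\t$ uniformly by $O(\sqrt\t)$ but gives no control of $\partial_\t R_\t$, so the residual term $\t\partial_\t R_\t/(1+R_\t)$ in $m_\t$ is a priori only $O(\sqrt\t)$, which swamps the $O(\t)$ main term on the bulk and the $O(\t/\z_\t)$ target. The clean route is to use the explicit formula \eqref{def: m} together with the expansions of $I_{1/2}$, $I_{3/2}$ and $I_{1/2}-I_{3/2}$ from Lemma~\ref{LemmaI1/23/2}, which require no $\t$-differentiation. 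Doing so, one finds that the leading approximation to $m_\t(y)\phi(y)$ integrates to \emph{exactly} zero over $\RR$ (this is the content of your ``consistency check'' $\int\psi_\t=1$), so the coefficient $-2^{3/2}\pi^{-3/2}$ arises entirely from the breakdown of the approximation where $m_\t$ saturates to $O(1)$ near $|y|\sim\k_\t$, rather than from a cancellation of two $O(\t)$ contributions of opposite sign as your description suggests. Your sketch for the $\t^{1/16}$ bound is also too vague to pin down the exponent: obtaining $1/16$ requires a specific choice of intermediate cutoff balanced against the shift factor $e^{|y||\th|}\le e^{\z_\t^2/4}$, and you have not carried out that optimisation.
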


\begin{lemma}\label{lem: uniform0}
For any $\e_\t\downarrow0$ and uniformly in $I_0\subseteq\{i: |\th_{0,i}|\leq \z_\t^{-1}\}$ with $|I_0|\gtrsim n$,
$
\sup_{1/n\leq \t\leq \e_\t}\frac{1}{|I_0|}\Big|\sum_{i\in I_0}m_\t(Y_i)\frac{\z_\t}{\t}
-\sum_{i\in I_0}\E_{\th_0} m_\t(Y_i)\frac{\z_\t}{\t}\Big|\stackrel{P_{\th_0}}{\rightarrow}0.
$
Similarly, uniformly in $I_1\subseteq\{i: |\th_{0,i}|\leq \z_\t/4\}$,
$
\sup_{1/n\leq \t\leq \e_\t}\frac{1}{|I_1|}\Big|\sum_{i\in I_1}m_\t(Y_i)\frac{\z_\t}{\t^{1/32}}
-\sum_{i\in I_1}\E_{\th_0} m_\t(Y_i)\frac{\z_\t}{\t^{1/32}}\Big|\stackrel{P_{\th_0}}{\rightarrow}0.
$
\end{lemma}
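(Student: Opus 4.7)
The plan is to repeat the empirical-process strategy already used for Lemma~\ref{lem: uniform1}. For the first claim, set
\begin{equation*}
Z_n(\t):=\frac{\z_\t}{\t\, |I_0|}\sum_{i\in I_0}\bigl[m_\t(Y_i)-\E_{\th_0}m_\t(Y_i)\bigr],
\end{equation*}
regard this as a stochastic process on $\t\in [1/n,\e_\t]$, and apply Corollary~2.2.5 of van der Vaart and Wellner (with $\psi(x)=x^2$) to conclude $\sup_\t|Z_n(\t)|\gaatp 0$. This requires (a) the pointwise variance bound $\Var_0 Z_n(\t_0)=o(1)$ at some $\t_0\in[1/n,\e_\t]$, and (b) the entropy estimate $\int_0^{\diamm}\sqrt{N(\eps,[1/n,\e_\t],d_n)}\,d\eps=o(1)$, where $d_n^2(\t_1,\t_2)=\Var_0\bigl(Z_n(\t_1)-Z_n(\t_2)\bigr)$ and $\diamm$ is the $d_n$-diameter of the interval.

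For (a), I would bound $\E_{\th_{0,i}}m_\t(Y_i)^2$ uniformly in $|\th_{0,i}|\le \z_\t^{-1}$ and $\t\in[1/n,\e_\t]$. Using the explicit formula \eqref{def: m} and the asymptotic expansions used in the proof of Lemma~\ref{LemmaMomentsOfThetaGivenY}, I would split the expectation into the three $y$-regions $|y|\le 1/\e_\t$, $1/\e_\t\le |y|\le \k_\t$, and $|y|\ge \k_\t$ (with $\k_\t$ as defined there) and integrate against the $N(\th_{0,i},1)$-density on each. The dominant contribution from the tail region gives a bound of order $\t/\z_\t^3$, so by independence $\Var_0 Z_n(\t)\lesssim (\z_\t^2/\t^2)(\t/\z_\t^3)/|I_0|=1/(\t\z_\t|I_0|)$, which is $o(1)$ uniformly in the interval since $\t|I_0|\gtrsim 1$ and $\min_{\t\in[1/n,\e_\t]} \z_\t=\z_{\e_\t}\to\infty$.

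For (b), I would differentiate \eqref{def: m} in $\t$ and apply Cauchy--Schwarz exactly as in the proof of Lemma~\ref{lem: metric2}:
\begin{equation*}
\E_0\bigl(m_{\t_1}(Y_i)-m_{\t_2}(Y_i)\bigr)^2\le (\t_2-\t_1)^2\sup_{\t\in[\t_1,\t_2]}\E_0 \dot m_\t(Y_i)^2,
\end{equation*}
with $\dot m_\t=(\partial/\partial\t) m_\t$. The same region-by-region calculation, now applied to $\dot m_\t$, should give a polynomial-in-$\t^{-1}$ bound on $\E_0\dot m_\t(Y_i)^2$, leading to a covering number $N(\eps,[1/n,\e_\t],d_n)$ that is polynomial in $\eps^{-1}$ and in $n$; the entropy integral is then $o(1)$ by the same computation as in the proof of Lemma~\ref{lem: uniform1}.

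The argument for the second statement is structurally identical, with $\z_\t/\t$ replaced by $\z_\t/\t^{1/32}$. The weaker scaling accommodates the weaker moment control available when $|\th_{0,i}|$ is only bounded by $\z_\t/4$; the exponent $1/32$ is dictated by the bound $\E_\th m_\t(Y)=o(\t^{1/16}\z_\t^{-1})$ of Proposition~\ref{prop:Em(Y)}. The main obstacle throughout is the explicit second-moment estimate of $m_\t$ and $\dot m_\t$: the ratios $I_k/I_{-1/2}$ and their $\t$-derivatives change their dominant behaviour between the three $y$-regions, and combining the pieces into clean uniform bounds down to $\t=1/n$ is the bulk of the work. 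Once those bounds are in hand, the remainder of the argument runs in parallel to the proof of Lemma~\ref{lem: uniform1}.
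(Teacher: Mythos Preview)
The paper does not prove this lemma here; it is listed under ``Results proven in the companion paper'' and deferred to \cite{contractionpaper}. Your strategy is nonetheless the right one and matches the template the paper does supply for the analogous Lemma~\ref{lem: uniform1}: a pointwise second-moment bound plus an $L^2$ increment bound fed into Corollary~2.2.5 of \cite{vdVW}.

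Two remarks. First, the moment estimates you flag as ``the bulk of the work'' are already available among the companion-paper lemmas quoted in this paper: Lemma~\ref{lem: m2} gives $\E_\th m_\t(Y)^2=o(\t\z_\t^{-2})$ for $|\th|\lesssim\z_\t^{-1}$ and $o(\t^{1/16}\z_\t^{-2})$ for $|\th|\le\z_\t/4$, while Lemma~\ref{lem: metric} gives the increment bound directly for the rescaled processes $\t\mapsto(\z_\t/\t)m_\t(Y)$ and $\t\mapsto(\z_\t/\t^{1/16})m_\t(Y)$. So you do not need to redo the region-by-region integration; you can simply cite these. Second, the entropy step is not quite ``the same computation'' as in Lemma~\ref{lem: uniform1}: there the parameter interval was a single dyadic block $[C^{-1}\t,C\t]$, whereas here it is $[1/n,\e_\t]$, spanning many scales. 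With the bound $d_n(\t_1,\t_2)\lesssim|I_0|^{-1/2}|\t_2-\t_1|\t_1^{-3/2}$ from Lemma~\ref{lem: metric}, a change of variable $u=\t^{-1/2}$ (or a dyadic decomposition) shows the covering number is still $O(\eps^{-1})$ since $|I_0|\gtrsim n$, and the diameter is $o(1)$ by Lemma~\ref{lem: m2}, so the entropy integral is $o(1)$ as you claim.
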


\begin{lemma}\label{lem: metric}
Let $Y\sim N(\th,1)$. For $|\th|\lesssim \z_\t^{-1}$ and $0<\t_1<\t_2\le 1/2$. Then 

$\E_\th\left(\frac{\z_{\t_1}}{\t_1}m_{\t_1}(Y)-\frac{\z_{\t_2}}{\t_2}m_{\t_2}(Y)\right)^2
\lesssim (\t_2-\t_1)^2 \t_1^{-3}.
$
Furthermore, for $|\th|\leq \z_\t/4$, and $\eps=1/16$ and $0<\t_1<\t_2\le 1/2$,
$
\E_\th\left(\frac{\z_{\t_1}}{\t_1^{\eps}}m_{\t_1}(Y)-\frac{\z_{\t_2}}{\t_2^{\eps}}m_{\t_2}(Y)\right)^2
\lesssim (\t_2	-\t_1)^2 \t_1^{-2-\eps}.
$
\end{lemma}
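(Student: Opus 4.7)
\textbf{Proof proposal for Lemma \ref{lem: metric}.}

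The plan is to reduce the problem to controlling a derivative in $\tau$ via the mean value theorem, then bound the expected square of that derivative under $Y \sim N(\theta,1)$ using the asymptotic expansions of $I_{-1/2}$, $I_{1/2}$, $I_{3/2}$ (and their $\tau$-derivatives $\dot I_k$) already developed in the companion paper. Concretely, for each fixed $y$ the map $\tau \mapsto (\zeta_\tau/\tau) m_\tau(y)$ is smooth on $(0,1/2]$, so
\begin{equation*}
\left(\frac{\zeta_{\tau_1}}{\tau_1}m_{\tau_1}(Y)-\frac{\zeta_{\tau_2}}{\tau_2}m_{\tau_2}(Y)\right)^{2}
\le (\tau_2-\tau_1)^{2}\sup_{\tau\in[\tau_1,\tau_2]}\left(\frac{d}{d\tau}\!\left[\frac{\zeta_\tau}{\tau} m_\tau(Y)\right]\right)^{2}.
\end{equation*}
It then suffices to show that the expected supremum of this squared derivative is bounded by a multiple of $\tau_1^{-3}$ under $|\theta|\lesssim \zeta_\tau^{-1}$, and analogously of $\tau_1^{-2-\varepsilon}$ under $|\theta|\le \zeta_\tau/4$ after rescaling by $\tau^{-\varepsilon}$ instead of $\tau^{-1}$.

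Next I would compute the derivative explicitly. Writing $m_\tau(y)$ via \eqref{def: m}, differentiating using the quotient rule, and applying the identities $I_k'(y) = y I_{k+1}(y)$ and the closed form of $\dot I_k$ obtained by differentiating under the integral sign in \eqref{eq:def.Ik}, the derivative expands into a finite sum of terms of the shape
\begin{equation*}
\frac{P(y)\,\dot I_{k_1}(y)\,I_{k_2}(y)\cdots}{I_{-1/2}(y)^{r}},\qquad k_j\in\{-1/2,1/2,3/2\},
\end{equation*}
multiplied by factors of $\zeta_\tau/\tau$ and $d(\zeta_\tau/\tau)/d\tau$, where $P$ is a polynomial of degree at most $2$. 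The pointwise bounds $\dot I_k/I_{-1/2} \lesssim \tau^{-2}$ and $J_k/I_{-1/2}\lesssim 1$ (for $k\ge 1/2$), which were established in the proof of Lemma~\ref{lem: metric2} cited in the excerpt and rest on Lemmas~\ref{LemmaI-1/2}--\ref{LemmaI1/23/2}, reduce the task to bounding polynomial expressions in $y$ weighted by the quotients $I_{1/2}/I_{-1/2}$ and $I_{3/2}/I_{-1/2}$.

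Having assembled these pointwise estimates, I would take the $\E_\theta$-expectation and split the real line into the three regions $|y|\le 1/\varepsilon_\tau$, $1/\varepsilon_\tau\le |y|\le \kappa_\tau$, and $|y|\ge \kappa_\tau$ (with $\kappa_\tau$ the threshold defined in the proof of Lemma~\ref{LemmaMomentsOfThetaGivenY}). On each region the quotients $I_{k}/I_{-1/2}$ admit sharp asymptotics, so the integrals against $\phi(y-\theta)$ can be estimated exactly as in the proofs of Lemmas~\ref{LemmaMomentsOfThetaGivenY} and \ref{lem: metric2}. Because $|\theta|\lesssim \zeta_\tau^{-1}$ the shift $e^{y\theta}$ is uniformly bounded by a constant, so the normal density may be replaced by $\phi(y)$ up to a multiplicative constant, reducing everything to centred Gaussian integrals. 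For the second inequality the only change is that $|\theta|\le \zeta_\tau/4$ allows an exponential factor $e^{y\theta}\lesssim \tau^{-1/4}$ in the tail, which is absorbed by choosing $\varepsilon=1/16$ and keeping track of the $\tau^{-\varepsilon}$ prefactor.

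The main obstacle will be the bookkeeping: isolating the dominant contribution to the derivative uniformly in $\tau\in[\tau_1,\tau_2]$ while showing that the various cancellations in the expression for $d m_\tau/d\tau$ do not accidentally improve or worsen the $\tau_1^{-3}$ (respectively $\tau_1^{-2-\varepsilon}$) scaling. In particular one must verify that the $\dot I_{-1/2}/I_{-1/2}$ contribution (which is of order $\tau^{-2}$, the worst singularity) combines with the factor $(\zeta_\tau/\tau)^{2}$ only through quantities of order $\tau\zeta_\tau^{-2}$ under $E_\theta$, exactly as happens in the proof of Lemma~\ref{lem: metric2}.
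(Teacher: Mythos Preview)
Your strategy is essentially the one the companion paper uses (note that this lemma is only \emph{stated} in the present paper; its proof lives in \cite{contractionpaper}). The template is exactly that of Lemma~\ref{lem: metric2} here: apply Lemma~\ref{Lem: Diff}, differentiate $\tau\mapsto(\zeta_\tau/\tau^a)m_\tau(y)$ in $\tau$, express the result through the ratios $\dot I_k/I_{-1/2}$ (equivalently $J_k/I_{-1/2}$), and bound the resulting Gaussian integrals using the $I_k$ asymptotics. So your plan is on the right track.

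Two points worth tightening. First, you invoke a pointwise mean-value bound and then propose to control $\E_\theta\bigl[\sup_{\tau\in[\tau_1,\tau_2]}(\partial_\tau V_\tau)^2\bigr]$. It is cleaner, and what the paper actually does, to use Lemma~\ref{Lem: Diff}, which gives
$\E_\theta(V_{\tau_2}-V_{\tau_1})^2\le(\tau_2-\tau_1)^2\sup_{\tau\in[\tau_1,\tau_2]}\E_\theta\dot V_\tau^2$,
i.e.\ the supremum sits \emph{outside} the expectation. This avoids having to argue uniformity in $\tau$ inside the integral. Second, you cite the inequalities $J_k/I_{-1/2}\le 4$ for $k\ge 1/2$ and $J_{-1/2}/I_{-1/2}\le \tau^{-2}$ as coming from the proof of Lemma~\ref{lem: metric2}. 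In fact the present paper attributes those very bounds to ``the following properties shown in [the] proof'' of Lemma~\ref{lem: metric} itself, so quoting them would be circular. You need to derive them directly; this is routine from the definition \eqref{eq:def.Ik} once you observe that $\dot I_k=-2\tau(J_{k-1}-J_k)$ with $J_k(y)=\int_0^1 z^k(\tau^2+(1-\tau^2)z)^{-2}e^{y^2z/2}\,dz$, and that $J_k\le J_{k-1}$ together with the trivial bound $(\tau^2+(1-\tau^2)z)^{-1}\le \tau^{-2}$.
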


\begin{lemma}\label{lem: m2}
Let $Y\sim N(\th,1)$. Then,  as $\t\rightarrow 0$, $\E_\th m_\t^2(Y)=o(\t \z_\t^{-2})$ if $|\th|\lesssim \z_\t^{-1}$, and $\E_\th m_\t^2(Y)=o (\t^{1/16} \z_{\t}^{-2})$ if $|\th|\leq \z_\t/4.$
\end{lemma}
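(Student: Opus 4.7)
The plan is to compute $\E_\th m_\t^2(Y) = \int_{\mathbb R} m_\t(y)^2 \phi(y-\th)\,dy$ directly by splitting $\mathbb R$ into three regions according to the magnitude of $y$ relative to the thresholds $\z_\t$ and $\k_\t$, where $\k_\t$ solves $e^{\k_\t^2/2}/(\k_\t^2/2)=1/\t$ (so $\k_\t\sim\z_\t$). This mirrors the strategy already used in the proof of Lemma~\ref{LemmaMomentsOfThetaGivenY}, which controlled $\var(\th_i\given Y_i,\t)$ and its square in exactly the same three zones.

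I would first rewrite $m_\t(y) = \bigl((y^2-1)I_{1/2}(y) - y^2 I_{3/2}(y)\bigr)/I_{-1/2}(y)$ and insert the asymptotic expansions $I_{-1/2}(y)=\pi/\t+H_{-1/2}(y)(1+o(1))$ and $I_k(y)=H_k(y)(1+O(\sqrt\t))$ for $k=1/2,3/2$ provided by Lemmas~\ref{LemmaI-1/2} and~\ref{LemmaI1/23/2} (from the companion paper), together with the incomplete-gamma asymptotics $H_k(y)\sim e^{y^2/2}/(y^2/2)$ from Lemma~\ref{LemmaIncompleteGamma}. This produces pointwise bounds on $m_\t(y)^2$ as follows: for $|y|\le c\z_\t$ with $c<1$ the factor $\pi/\t$ dominates $I_{-1/2}$ and (after cancellation in the numerator discussed below) one obtains $|m_\t(y)|\lesssim \t \,e^{y^2/2}$; for $c\z_\t\le|y|\le\k_\t$ the two contributions in $I_{-1/2}$ are comparable and $|m_\t(y)|=O(y^2)$; for $|y|\ge\k_\t$ the ratios $I_{1/2}/I_{-1/2}$ and $I_{3/2}/I_{-1/2}$ both tend to $1$, giving $m_\t(y)=O(1)$. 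Integrating each pointwise bound against $\phi(y-\th)$ and using $|\th|\lesssim \z_\t^{-1}$ to keep the shift $\th y$ bounded over the bulk, the three contributions combine to yield the target rate $o(\t\z_\t^{-2})$. The weaker rate $o(\t^{1/16}\z_\t^{-2})$ under $|\th|\le\z_\t/4$ follows by the same decomposition after accounting for the stronger exponential factor $e^{\th y}$ in the integrand: re-optimising the boundary between the bulk and intermediate zones with the extra shift produces the exponent $1/16$ — the same exponent that appears in Proposition~\ref{prop:Em(Y)} and Lemma~\ref{lem: metric} for precisely this reason.

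The main obstacle will be upgrading a bare $O$-estimate to the stated $o$-estimate in the bulk region. A crude pointwise bound $m_\t(y)^2\lesssim \t^2 e^{y^2}$ integrated against $\phi$ on $[0,\z_\t]$ produces $\t^2\int_0^{\z_\t}e^{y^2/2}\,dy\lesssim \t/\z_\t$, which is a factor $\z_\t$ too large. Closing this gap requires exploiting the cancellation in the numerator: an integration by parts in $z$ applied to $y^2 I_{3/2}(y)=2\int_0^1 z^{1/2}(\t^2+(1-\t^2)z)^{-1}\,d(e^{y^2 z/2})$ produces a boundary term $2e^{y^2/2}$ together with a remainder that pairs cleanly with $(y^2-1)I_{1/2}(y)$ and $I_{-1/2}(y)$, effectively gaining an extra factor $y^{-2}$ in the integrand. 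This is the same cancellation that makes $\E_\th m_\t(Y)$ of order $\t/\z_\t$ (rather than $\t$) in Proposition~\ref{prop:Em(Y)}. With this refinement, the bulk integral becomes of order $\t^2 e^{\z_\t^2/2}/\z_\t^3 = \t/\z_\t^3 = o(\t/\z_\t^2)$, while the intermediate and tail contributions are suppressed by the Gaussian factor $e^{-(y-\th)^2/2}$ at the boundary $|y|=c\z_\t$ and so contribute even less.
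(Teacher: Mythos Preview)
The paper does not prove this lemma here; it is listed among results deferred to the companion paper \cite{contractionpaper}, so there is no in-paper proof to compare against. I evaluate your proposal on its own merits.

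Your overall architecture is correct and is the natural one: split according to $|y|$, feed in the expansions from Lemmas~\ref{LemmaI-1/2}--\ref{LemmaI1/23/2}, and exploit the numerator cancellation to win the extra $y^{-2}$. That cancellation is exactly Lemma~\ref{Lem: asymp_m}(vii), which gives the global bound $|m_\t(y)|\lesssim \t e^{y^2/2}(y^{-2}\wedge 1)$.

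There is, however, a real gap in your intermediate region. You place the bulk on $|y|\le c\z_\t$ with $c<1$ and then assert $|m_\t(y)|=O(y^2)$ on $[c\z_\t,\k_\t]$, saying the Gaussian factor at the boundary $c\z_\t$ makes this region ``contribute even less''. It does not: even granting the much tighter uniform bound $|m_\t|\le C_u$ from Lemma~\ref{Lem: asymp_m}(i), the Gaussian tail beyond $c\z_\t$ is only $\asymp \t^{c^2}/\z_\t$, and for $c<1$ this is $\gg \t/\z_\t^2$. With your stated $O(y^2)$ bound the contribution becomes $\asymp \z_\t^3\t^{c^2}$, worse still. (Incidentally, the two pieces of $I_{-1/2}$ become comparable near $\k_\t$, not near $c\z_\t$ with $c<1$; on $[c\z_\t,\z_\t]$ the term $\pi/\t$ still dominates, so your description of this zone is off.)

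The repair is easy: drop the separate intermediate zone. The refined bound $|m_\t(y)|\lesssim \t e^{y^2/2}/y^2$ holds globally, so carry it all the way to $\k_\t$. For $|\th|\lesssim\z_\t^{-1}$ one obtains
\[
\int_1^{\k_\t} m_\t(y)^2\phi(y-\th)\,dy \;\lesssim\; \t^2\int_1^{\k_\t}\frac{e^{y^2/2}}{y^4}\,dy \;\asymp\; \t^2\,\frac{e^{\k_\t^2/2}}{\k_\t^5}\;=\;\frac{\t}{2\k_\t^3}\;=\;o\bigl(\t\z_\t^{-2}\bigr),
\]
while the tail $|y|\ge\k_\t$ with $m_\t^2=O(1)$ contributes $\int_{\k_\t}^\infty\phi(y)\,dy\asymp e^{-\k_\t^2/2}/\k_\t=2\t/\k_\t^3=o(\t\z_\t^{-2})$. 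For $|\th|\le\z_\t/4$ the same two-region split works: in the bulk the shift costs at most a factor $e^{\k_\t|\th|}\lesssim\t^{-1/2}$, giving $O(\t^{1/2}/\z_\t^3)$, and in the tail $\phi(\k_\t-\z_\t/4)\lesssim\t^{9/16}$; both are $o(\t^{1/16}\z_\t^{-2})$.
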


\begin{lemma}
\label{lem: E_nonzero_m(Y)}
\label{Lemma: tech1}
\label{Lem: asymp_m}
The function $y\mapsto m_\t(y)$ is symmetric about 0 and nondecreasing on $[0,\infty)$ with 
\begin{itemize}
\item[(i)] $-1\le  m_\t(y)\le C_u$, for all $y\in\RR$ and all $\t\in[0,1]$, and some $C_u<\infty$.
\item[(ii)] $m_\t(0)=-(2\t/\pi)(1+o(1))$, as $\t\ra0$.
\item[(iii)] $m_\t(\z_\t)=2/(\pi\z_\t^2)(1+o(1))$, as $\t\ra0$.
\item[(iv)] $m_\t(\k_\t)=1/(\pi+1)/(1+o(1))$, as $\t\ra0$.
\item[(v)] $\sup_{y\ge A\z_\t}|m_\t(y)-1|=O(\z_\t^{-2})$, as $\t\ra0$, for every $A>1$.
\item[(vi)] $m_\t(y)\sim\t e^{y^2/2}/(\pi y^2/2+\t e^{y^2/2})$, as $\t\ra0$, uniformly in $|y|\ge1/\e_\t$, for any $\e_\t\da0$.
\item[(vii)] $|m_\t(y)|\lesssim\t e^{y^2/2}(y^{-2}\wedge 1)$, as $\t\ra0$, for every $y$.
\end{itemize}
\end{lemma}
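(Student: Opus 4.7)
The plan is to establish the seven properties in order, working from representation \eqref{def: m} of $m_\t$ in terms of the integrals $I_k$ of \eqref{eq:def.Ik}. Symmetry of $y\mapsto m_\t(y)$ is immediate because each $I_k(y)$ depends on $y$ only through $y^2$. For monotonicity on $[0,\infty)$, I would introduce the probability measure $Q_y$ on $[0,1]$ with density proportional to $z^{-1/2}(\t^2+(1-\t^2)z)^{-1}e^{y^2z/2}$ and rewrite $m_\t(y)=y^2\E_{Q_y}[Z(1-Z)]-\E_{Q_y}[Z]$. Using the exponential-tilt identity $\tfrac{d}{dy}\E_{Q_y}[f(Z)]=y\Cov_{Q_y}(f(Z),Z)$, the derivative reduces to a linear combination of $\E_{Q_y}[Z(1-Z)]$, $\Cov_{Q_y}(Z(1-Z),Z)$ and $\Var_{Q_y}(Z)$; its nonnegativity is proved separately for moderate $y$ (where $2\E_{Q_y}[Z(1-Z)]$ dominates) and for large $y$ (where $Q_y$ concentrates near $z=1$ and a local expansion around that point yields the sign).

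The bounds in (i) follow from two pointwise inequalities on $(0,1]$: $z^{1/2}(1-z)\ge 0$ gives $I_{1/2}\ge I_{3/2}$, and $z^{1/2}\le z^{-1/2}$ gives $I_{1/2}\le I_{-1/2}$; together these yield $m_\t(y)\ge -1$. For the uniform upper bound $m_\t(y)\le C_u$, use $y^2 e^{y^2z/2}=2\partial_z e^{y^2z/2}$ and integrate by parts in the integral defining $y^2(I_{1/2}-I_{3/2})$: since $z^{1/2}(1-z)(\t^2+(1-\t^2)z)^{-1}$ vanishes at both endpoints, the boundary contributions disappear, and the correction integral can be bounded uniformly in $(y,\t)\in\RR\times(0,1]$ by a constant multiple of $I_{-1/2}(y)$ using the estimate $(\t^2+(1-\t^2)z)^{-1}\le 2/(z\vee\t^2)$.

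For the asymptotics (ii)--(vii), expand $I_k(y)$ as $\t\to 0$ using the same tools as in the proof of Lemma~\ref{LemmaMomentsOfThetaGivenY}. At $y=0$ the substitution $u=\sqrt z$ turns $I_{-1/2}(0)$ and $I_{1/2}(0)$ into elementary arctan integrals, yielding $I_{-1/2}(0)\sim\pi/\t$ and $I_{1/2}(0)\to 2$, hence $m_\t(0)\sim -2\t/\pi$ and statement (ii). For $|y|\gtrsim 1/\e_\t$ the uniform expansions $I_{-1/2}(y)=\pi/\t+H_{-1/2}(y)(1+O(\sqrt\t))$ and $I_k(y)=H_k(y)(1+O(\sqrt\t))$ for $k\in\{1/2,3/2\}$ from Lemmas~\ref{LemmaI-1/2}--\ref{LemmaI1/23/2}, combined with $H_k(y)\sim 2e^{y^2/2}/y^2$ valid for large $y$, yield (vi) after substitution into \eqref{def: m}. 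Statements (iii) and (iv) then follow by evaluating (vi) at $y=\z_\t$ (where $\t e^{y^2/2}=1$) and at $y=\k_\t$ (where $\t e^{y^2/2}/(y^2/2)=1$ by definition of $\k_\t$). The sharper bound (v) comes from refining the Laplace expansion $H_k(y)=(2e^{y^2/2}/y^2)(1+O(y^{-2}))$ in the regime $y\ge A\z_\t>\z_\t$, where the $\pi/\t$ contribution in $I_{-1/2}$ is then negligible. Finally, (vii) follows by combining the Laplace bound on $I_{-1/2}(y)$ with the trivial chain $I_{3/2}\le I_{1/2}\le I_{-1/2}$.

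The main obstacle is twofold: the uniform upper bound in (i), which must hold across all $(y,\t)\in\RR\times(0,1]$ simultaneously and therefore requires careful bookkeeping through the integration by parts; and the transition regime around $y\asymp\k_\t$, where neither $I_{-1/2}(y)\sim\pi/\t$ nor $I_{-1/2}(y)\sim 2e^{y^2/2}/y^2$ is individually dominant, so both terms must be retained in the expansion and the error terms propagated through the ratio to obtain the $1+o(1)$ uniformity required for (iv) and (vi).
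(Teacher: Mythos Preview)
The paper does not actually prove this lemma: it appears in the appendix under ``Results proven in the companion paper'' with the remark that all proofs are given in \cite{contractionpaper}. So there is no proof in this paper to compare against; your task is effectively to reconstruct the companion paper's argument.

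Your outline is sound and hits all the right ingredients. The representation $m_\t(y)=y^2\E_{Q_y}[Z(1-Z)]-\E_{Q_y}[Z]$ via the tilted measure $Q_y$ is correct (since $\E_{Q_y}[Z^{k+1/2}]=I_k/I_{-1/2}$), and the tilt identity $\tfrac{d}{dy}\E_{Q_y}[f(Z)]=y\,\Cov_{Q_y}(f(Z),Z)$ is the natural tool for monotonicity. The lower bound $m_\t\ge -1$ from $I_{3/2}\le I_{1/2}\le I_{-1/2}$ is immediate, and the integration-by-parts route to the uniform upper bound $C_u$ is right: the boundary terms from $z^{1/2}(1-z)(\t^2+(1-\t^2)z)^{-1}$ do vanish, and the derivative of this weight is indeed controllable by a multiple of $z^{-1/2}(\t^2+(1-\t^2)z)^{-1}$ uniformly in $\t\in(0,1]$. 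Items (ii)--(vii) all follow cleanly from the expansions in Lemmas~\ref{LemmaI-1/2} and~\ref{LemmaI1/23/2} exactly as you describe; in particular your reading of (iii) via $\t e^{\z_\t^2/2}=1$ and (iv) via $\t e^{\k_\t^2/2}=\k_\t^2/2$ is correct, and for (v) the dominant error does come from the $O(y^{-2})$ Laplace remainder at $y=A\z_\t$, giving $O(\z_\t^{-2})$.

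The one place where your sketch is thin is monotonicity. After differentiating you get
\[
\frac{d}{dy}m_\t(y)=y\Bigl[2\E_{Q_y}[Z(1-Z)]+y^2\bigl(\Var_{Q_y}(Z)-\Cov_{Q_y}(Z^2,Z)\bigr)-\Var_{Q_y}(Z)\Bigr],
\]
and showing this is nonnegative for all $y\ge0$ and all $\t\in(0,1]$ simultaneously is not automatic from your two-regime heuristic: in the intermediate range $y\asymp\k_\t$ neither ``$2\E_{Q_y}[Z(1-Z)]$ dominates'' nor ``$Q_y$ is concentrated near $1$'' is obviously true, and you would need to patch the regimes together with an explicit inequality (e.g.\ rewriting the bracket as a positive combination of second moments under $Q_y$, or bounding $\Cov_{Q_y}(Z^2,Z)\le \Var_{Q_y}(Z)$ using $0\le Z\le 1$ together with $\Var_{Q_y}(Z)\le \E_{Q_y}[Z(1-Z)]$). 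This is the only genuine gap; the rest of the proposal would go through with routine bookkeeping.
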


\begin{lemma}
\label{LemmaIncompleteGamma}
For any $k$, as $y\ra\infty$,
$\int_1^y u^ke^u\,du=y^ke^y\bigl(1-k/y+O(1/y^2)\bigr).$
Consequently, as $y\ra\infty$,
$\int_1^y{u^{k}}{e^u}\,du-\frac 1y\int_1^y u^{k+1} e^u\,du=y^{k-1}e^y\bigl(1+O(1/y)\bigr).$
\end{lemma}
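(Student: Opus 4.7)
My plan is to prove both assertions by a single application of integration by parts followed by a bootstrap.

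For the first assertion, I would integrate by parts with $w=u^k$ and $dv=e^u\,du$ to obtain the recursion
\[
I_k(y):=\int_1^y u^k e^u\,du = y^k e^y - e - k\,I_{k-1}(y).
\]
Applying the same identity to $I_{k-1}$ and $I_{k-2}$ yields
\[
I_k(y) = y^k e^y - k\, y^{k-1} e^y + k(k-1)\,I_{k-2}(y) + O(1).
\]
A crude bound $I_{k-2}(y)\le C_k\, y^{k-2} e^y$ (itself obtained by one further integration by parts, or directly by noting $u^{k-2}\le y^{k-2}$ on $[y/2,y]$ and the contribution from $[1,y/2]$ being negligible as $y\to\infty$) then gives $I_k(y)=y^ke^y-ky^{k-1}e^y+O(y^{k-2}e^y)$. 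Factoring $y^ke^y$ produces the stated expansion $y^ke^y\bigl(1-k/y+O(1/y^{2})\bigr)$.

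For the consequence, I would plug the first assertion into both integrals (with indices $k$ and $k+1$):
\[
I_k(y)-\tfrac{1}{y}I_{k+1}(y)=y^k e^y\!\left(1-\tfrac{k}{y}+O(\tfrac{1}{y^2})\right)-\tfrac{1}{y}\,y^{k+1}e^y\!\left(1-\tfrac{k+1}{y}+O(\tfrac{1}{y^2})\right).
\]
The $y^ke^y$ terms cancel exactly, while the $y^{k-1}e^y$ terms combine with coefficient $-k+(k+1)=1$, leaving a remainder of order $y^{k-2}e^y$. Thus the right-hand side equals $y^{k-1}e^y\bigl(1+O(1/y)\bigr)$.

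There is no real obstacle here; the only minor care is to justify the $O(y^{k-2}e^y)$ bound on $I_{k-2}(y)$ uniformly for large $y$ without circularity, which can be handled either by induction on the recursion for $I_{k-1}$ or by the elementary observation that $\int_1^y u^{k-2}e^u\,du\le 2 y^{k-2}e^y$ for $y$ large enough (splitting at $y/2$ and bounding the two pieces separately). Once that is in place, both assertions fall out immediately from the recursion.
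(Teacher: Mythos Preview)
Your approach is correct and is the standard integration-by-parts argument for Laplace-type integrals; the paper itself does not prove this lemma but defers to the companion paper \cite{contractionpaper}, where the same method is used. One small slip: the inequality $u^{k-2}\le y^{k-2}$ on $[y/2,y]$ holds only when $k-2\ge 0$; for $k-2<0$ you should bound by $(y/2)^{k-2}=2^{2-k}y^{k-2}$ instead, after which your splitting argument goes through unchanged and yields $I_{k-2}(y)=O(y^{k-2}e^y)$ for all real $k$ without circularity.
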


\begin{lemma}
\label{LemmaI-1/2}
There exist functions $R_\t$ with $\sup_{y}|R_\t(y)|=O(\sqrt{\t})$ as $\t\downarrow0$,  such that
$
I_{-1/2}(y)=\Bigl(\frac{\pi}\t+\sqrt{y^2/2}\int_1^{y^2/2} \frac1{v^{3/2}}e^v\,dv\Bigr)\bigl(1+R_\t(y)\bigr).
$
Furthermore, given $\e_\t\ra 0$ there exist functions $S_\t$ with $\sup_{y\ge 1/\e_\t}|S_\t(y)|=O(\sqrt \t+\e_\t^2)$, 
such that, as $\t\da0$, 
$
I_{-1/2}(y)=\Bigl(\frac{\pi}\t+\frac{e^{y^2/2}}{y^2/2}\Bigr)\bigl(1+S_\t(y)\bigr).
$
\end{lemma}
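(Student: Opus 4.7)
The plan is to carry out the change of variables $v = y^2 z/2$ in the definition \eqref{eq:def.Ik}, giving (for $y\neq 0$)
$$I_{-1/2}(y) = \int_0^{y^2/2}\frac{|y|\,e^v}{\sqrt{2v}\,\bigl(a + v(1-\t^2)\bigr)}\,dv,\qquad a := y^2\t^2/2,$$
and then to identify two natural pieces of this integrand with the two terms in the target expansion. The identity $\int_0^\infty u^{-1/2}(1+u)^{-1}\,du = \pi$, together with the substitution $v = au$, gives
$$\int_0^\infty\frac{|y|}{\sqrt{2v}\,(a+v)}\,dv = \frac{\pi}{\t},$$
while the approximation $a + v(1-\t^2)\approx v$ on $[1,y^2/2]$ yields
$$\int_1^{y^2/2}\frac{|y|\,e^v}{\sqrt{2v}\cdot v}\,dv = \sqrt{y^2/2}\int_1^{y^2/2}\frac{e^v}{v^{3/2}}\,dv.$$

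Next I would write the discrepancy $D(y,\t) := I_{-1/2}(y) - \pi/\t - \sqrt{y^2/2}\int_1^{y^2/2}v^{-3/2}e^v\,dv$ as a sum of three explicit remainders: (i) the contribution of $e^v-1$ on $[0,1]$, bounded by $C|y|$ via $e^v-1\le ev$ and integrability of $v^{-1/2}$; (ii) denominator corrections on $[0,1]$ and on $[1,y^2/2]$ arising from the identities $(a+v)-(a+v(1-\t^2))=v\t^2$ and $v-(a+v(1-\t^2))=v\t^2-a$ respectively, the first bounded by $C\t$ after splitting at $v\sim a$ and using $\int_0^\infty \sqrt w/(1+w)^2\,dw=\pi/2$, the second bounded by $C\t^2\cdot e^{y^2/2}/(y^2/2)$ via $|v\t^2-a|\le a$ and the incomplete-gamma estimate of Lemma~\ref{LemmaIncompleteGamma}; and (iii) the discarded tail $\int_{y^2/2}^\infty|y|/[\sqrt{2v}(a+v)]\,dv$, bounded by $2$ via $\int_{y^2/2}^\infty v^{-3/2}\,dv=2\sqrt 2/|y|$, together with the related $\int_1^{y^2/2}|y|/[\sqrt{2v}(a+v)]\,dv$ piece which is at most $\sqrt 2|y|$.

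The main obstacle is verifying the \emph{multiplicative} bound $|D(y,\t)|\le C\sqrt\t\,M(y)$ uniformly in $y$, where $M(y):=\pi/\t+\sqrt{y^2/2}\int_1^{y^2/2}v^{-3/2}e^v\,dv$. For $|y|\le\t^{-1/2}$ the absolute bound $|D|=O(|y|)=O(\t^{-1/2})=\sqrt\t\cdot O(\t^{-1})$ suffices, since $M(y)\gtrsim\pi/\t$. For $|y|>\t^{-1/2}$ the tail in $M(y)$ is of order $e^{y^2/2}/(y^2/2)$ by Lemma~\ref{LemmaIncompleteGamma} and dominates every polynomial in $|y|$ as $\t\ra0$ since $e^{y^2/2}\ge e^{1/(2\t)}$ grows faster than $\t^{-p}$ for any $p>0$. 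Combining the two regimes yields $|R_\t(y)|=O(\sqrt\t)$ uniformly.

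Finally, the second assertion follows by inserting the expansion $\int_1^{y^2/2}v^{-3/2}e^v\,dv=(y^2/2)^{-3/2}e^{y^2/2}(1+O(1/y^2))$ of Lemma~\ref{LemmaIncompleteGamma} into the first: for $|y|\ge 1/\e_\t$ the $O(1/y^2)$ becomes $O(\e_\t^2)$, and multiplying the two multiplicative error factors $(1+R_\t(y))(1+O(\e_\t^2))=1+O(\sqrt\t+\e_\t^2)$ gives the claimed form with $S_\t(y)=O(\sqrt\t+\e_\t^2)$.
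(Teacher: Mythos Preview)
The paper does not actually prove this lemma: it appears in the final appendix under ``Results proven in the companion paper'', with the remark that all proofs are given in \cite{contractionpaper}. So there is no in-paper argument to compare your proposal against.

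On its own merits your approach is sound and is the natural one: the change of variables $v=y^2z/2$, the identification of the two target terms via $\int_0^\infty u^{-1/2}(1+u)^{-1}\,du=\pi$ and the straight approximation $a+v(1-\t^2)\approx v$ on $[1,y^2/2]$, and the remainder bookkeeping in pieces (i)--(iii) are all correct. The two-regime argument ($|y|\le\t^{-1/2}$ versus $|y|>\t^{-1/2}$) for converting the additive remainder bounds into a uniform multiplicative $O(\sqrt\t)$ is also correct, and the deduction of the second assertion from the first via Lemma~\ref{LemmaIncompleteGamma} is clean.

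One small gap worth patching: your splitting of the $v$-integral at $v=1$ tacitly assumes $y^2/2\ge 1$. For $|y|<\sqrt 2$ the integral $\int_1^{y^2/2}$ reverses orientation and your estimates for pieces (i)--(iii) as written do not literally apply. The easiest fix is to treat bounded $|y|$ directly: for $|y|\le K$ one checks that $I_{-1/2}(y)=\pi/\t+O(1)$ and $M(y)=\pi/\t+O(1)$ uniformly (the second because $\sqrt{b}\int_b^1 v^{-3/2}e^v\,dv$ stays bounded as $b=y^2/2\downarrow 0$), giving $|R_\t(y)|=O(\t)$ there. With that caveat your argument is complete.
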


\begin{lemma}
\label{LemmaI1/23/2}
For $k>0$, there exist functions $R_{\t,k}$ with $\sup_{y}|R_{\t,k}(y)|=O(\t^{2k/(k+1)})$,
and for given $\e_\t\ra 0$ functions $S_{\t,k}$ with $\sup_{y\ge 1/\e_\t}|S_{\t,k}(y)|=O(\t^{2k/(2k+1)}+\e_\t^2)$, such that,
as $\t\da0$, $I_{k}(y)=\frac1{(y^2/2)^k}\int_0^{y^2/2}v^{k-1}e^v\,dv\bigl(1+R_{\t,k}(y)\bigr)\lesssim\bigl(1\wedge y^{-2}\bigr)e^{y^2/2},$ and 
$I_{k}(y)=\frac{e^{y^2/2}}{y^2/2}\bigl(1+S_{\t,k}(y)\bigr).$ 

There also exist functions $\bar R_{\t}$ with $\sup_{y}|\bar R_{\t}(y)|=O(\t^{1/2})$ and  
$\bar S_{\t}$ with $\sup_{y\ge 1/\e_\t} \allowbreak |\bar S_{\t}(y)|=O(\sqrt\t+\e_\t^2)$, such that, as $\t\da0$ and $\e_\t\ra0$, $I_{1/2}(y)-I_{3/2}(y)=\frac1{\sqrt{y^2/2}}\int_0^{y^2/2}\frac{1-2v/y^2}{\sqrt v}e^v\,dv\bigl(1+\bar R_{\t}(y)\bigr)
\lesssim (1\wedge y^{-4})e^{y^2/2},$ and
$I_{1/2}(y)-I_{3/2}(y)=\frac{e^{y^2/2}}{(y^2/2)^2}\bigl(1+\bar S_{\t}(y)\bigr).$
\end{lemma}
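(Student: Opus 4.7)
The plan is to prove all four assertions from a single algebraic identity:
$$\frac{1}{\t^2+(1-\t^2)z}=\frac{1}{z}-\frac{\t^2(1-z)}{z\bigl(\t^2+(1-\t^2)z\bigr)},$$
which separates the rational factor in the defining integral \eqref{eq:def.Ik} into a principal part and a $\t^2$-small correction. First I would apply this identity inside $I_k(y)=\int_0^1 z^k\bigl[\t^2+(1-\t^2)z\bigr]^{-1}e^{y^2z/2}\,dz$ to obtain the clean split $I_k(y)=M_k(y)-E_k(y,\t)$, where the main piece is $M_k(y):=\int_0^1 z^{k-1}e^{y^2z/2}\,dz$ and the error is $E_k(y,\t):=\int_0^1 z^{k-1}\,\t^2(1-z)/\bigl(\t^2+(1-\t^2)z\bigr)\,e^{y^2z/2}\,dz$. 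The substitution $v=y^2z/2$ then rewrites $M_k(y)$ as $(y^2/2)^{-k}\int_0^{y^2/2}v^{k-1}e^v\,dv$, giving exactly the main term appearing in the first assertion. So the entire problem reduces to bounding $E_k(y,\t)$ uniformly in $y$ as a fraction of $M_k(y)$, and this defines the function $R_{\t,k}$.

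Next I would control $E_k$ by splitting the integral at a threshold $\delta=\delta(\t)$ to be optimized: on $[0,\delta]$ bound the rational factor by $1$ to obtain a contribution of order $\delta^ke^{y^2\delta/2}$, and on $[\delta,1]$ bound it by $\t^2/z$ to obtain a contribution of order $\t^2\int_\delta^1 z^{k-2}e^{y^2z/2}\,dz$. Balancing these against the two regimes of $M_k(y)$ (of order $1/k$ for bounded $y$, and of order $e^{y^2/2}/(y^2/2)$ for large $y$ by Lemma~\ref{LemmaIncompleteGamma}) and optimizing $\delta$ yields the claimed uniform rate $\t^{2k/(k+1)}$, and hence the function $R_{\t,k}$. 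The upper bound $I_k(y)\lesssim(1\wedge y^{-2})e^{y^2/2}$ is immediate from the same bound on $M_k(y)$, using $\int_0^{y^2/2}v^{k-1}e^v\,dv\leq (y^2/2)^{k-1}e^{y^2/2}/k$ for small $y^2/2$ and Lemma~\ref{LemmaIncompleteGamma} for large $y^2/2$. For the sharper $S_{\t,k}$-expansion, valid only on $y\geq 1/\e_\t$, I would combine the $R_{\t,k}$-bound with the leading-order asymptotics of Lemma~\ref{LemmaIncompleteGamma} applied to $\int_0^{y^2/2}v^{k-1}e^v\,dv$, so that the constant contribution $\int_0^1 v^{k-1}e^v\,dv$ becomes negligible at the cutoff and is absorbed into the $O(\e_\t^2)$ tail of $S_{\t,k}$; re-optimizing the split $\delta$ on the restricted range produces the exponent $2k/(2k+1)$.

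For the last two assertions, exactly the same scheme applies to $I_{1/2}(y)-I_{3/2}(y)=\int_0^1\sqrt{z}(1-z)\bigl[\t^2+(1-\t^2)z\bigr]^{-1}e^{y^2z/2}\,dz$. Applying the identity gives the main term $\int_0^1(1-z)z^{-1/2}e^{y^2z/2}\,dz$, which under $v=y^2z/2$ turns into $(y^2/2)^{-1/2}\int_0^{y^2/2}(1-2v/y^2)v^{-1/2}e^v\,dv$ as claimed. The improvement from $y^{-2}$ to $y^{-4}$ in the uniform bound, and from $(y^2/2)^{-1}$ to $(y^2/2)^{-2}$ in the $\bar S_\t$-expansion, comes precisely from the ``consequently'' part of Lemma~\ref{LemmaIncompleteGamma}: the integrand is $v^{-1/2}e^v-(y^2/2)^{-1}v^{1/2}e^v$, so Lemma~\ref{LemmaIncompleteGamma} delivers cancellation of the leading $(y^2/2)^{-1/2}e^{y^2/2}$ term and leaves $(y^2/2)^{-3/2}e^{y^2/2}(1+O(1/y^2))$. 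The error term now carries an extra factor $(1-z)^2$ (the product of the $(1-z)$ from $z^{1/2}-z^{3/2}$ and the $(1-z)$ from the identity) which vanishes at $z=1$ where $e^{y^2z/2}$ is maximal; this extra vanishing is what turns the would-be $\t^{1/2}$-versus-$\t^{2/3}$ balance into the stated $\bar R_\t = O(\sqrt\t)$ and $\bar S_\t = O(\sqrt\t+\e_\t^2)$.

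The main obstacle is delivering the error estimates \emph{uniformly} in $y$: $M_k(y)$ transitions sharply around the level $y\asymp\k_\t$ where $e^{y^2/2}/y^2$ crosses $1/\t$, and the optimal split $\delta$ must simultaneously control the small-$z$ contribution $\delta^ke^{y^2\delta/2}$ and the large-$z$ contribution $\t^2e^{y^2/2}/y^2$ across the whole transition. The unusual exponents $2k/(k+1)$ and $2k/(2k+1)$ arise as the optimal balance points, and verifying that they are attained uniformly requires casework on whether $y^2\delta$ is small or large, together with the precise incomplete-gamma asymptotics from Lemma~\ref{LemmaIncompleteGamma}; the rest of the argument is a bookkeeping exercise.
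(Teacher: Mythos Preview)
The paper does not actually prove this lemma here: it is listed under ``Results proven in the companion paper'' with the proof deferred to \cite{contractionpaper}, so there is no in-paper argument to compare against.

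On its own merits your scheme is correct and is the natural one. The identity $\frac{1}{\t^2+(1-\t^2)z}=\frac1z-\frac{\t^2(1-z)}{z(\t^2+(1-\t^2)z)}$ gives precisely $I_k=M_k-E_k$ with $M_k(y)=(y^2/2)^{-k}\int_0^{y^2/2}v^{k-1}e^v\,dv$, and the split at $z=\delta$ works cleanly: on $[\delta,1]$ one has $E_k\le \frac{\t^2}{(1-\t^2)\delta}\int_\delta^1 z^{k-1}e^{y^2z/2}\,dz\le \frac{\t^2}{(1-\t^2)\delta}M_k$, while on $[0,\delta]$ the substitution $z=\delta u$ gives $\int_0^\delta z^{k-1}e^{y^2z/2}\,dz=\delta^k\int_0^1 u^{k-1}e^{y^2\delta u/2}\,du\le \delta^k M_k$ \emph{uniformly} in $y$, so the casework you worry about is in fact unnecessary. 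Balancing $\delta^k$ against $\t^2/\delta$ yields $\delta=\t^{2/(k+1)}$ and the stated rate $\t^{2k/(k+1)}$.

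Two small inaccuracies in your commentary, neither of which affects the argument. First, you do not need to ``re-optimise to get $2k/(2k+1)$'' for $S_{\t,k}$: since $2k/(2k+1)<2k/(k+1)$, the $R_{\t,k}$ bound already implies the $\t$-part of the $S_{\t,k}$ bound once you add the $O(\e_\t^2)$ from Lemma~\ref{LemmaIncompleteGamma}. Second, your explanation of the $(1-z)^2$ factor is slightly off: for small $y$ you already obtain $\t^{2/3}$, which is stronger than the claimed $O(\sqrt\t)$; the real service of the extra $(1-z)$ is in the large-$y$ regime, where the main term $M_{1/2}-M_{3/2}$ is one power of $y^{-2}$ smaller than $M_{1/2}$, and the additional vanishing at $z=1$ keeps the error-to-main ratio uniformly controlled there. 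The $\bar S_\t$ expansion then follows from the ``consequently'' clause of Lemma~\ref{LemmaIncompleteGamma} exactly as you say.
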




\begin{lemma}\label{Lem: Diff}
For any stochastic process $(V_{\t}: \t>0)$ with continuously differentiable sample paths $\t\mapsto V_\t$, 
with derivative written as $\dot V_\t$,
$\E( V_{\t_2}-V_{\t_1})^2\leq (\t_2-\t_1)^2\sup_{\t\in[\t_1,\t_2]}\E \dot V_{\t}^2.
$\end{lemma}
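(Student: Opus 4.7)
The plan is to derive the inequality from the fundamental theorem of calculus combined with the Cauchy--Schwarz inequality, which is the standard route for such variance-type bounds on increments of a differentiable process.

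First I would write, using the assumed continuous differentiability of sample paths,
\[
V_{\t_2}-V_{\t_1}=\int_{\t_1}^{\t_2}\dot V_{\t}\,d\t,
\]
almost surely. Applying the Cauchy--Schwarz inequality to the pair of functions $\dot V_{\t}$ and $1$ on the interval $[\t_1,\t_2]$ gives pointwise (in $\omega$)
\[
\bigl(V_{\t_2}-V_{\t_1}\bigr)^{2}\le (\t_2-\t_1)\int_{\t_1}^{\t_2}\dot V_{\t}^{\,2}\,d\t.
\]
Taking expectations on both sides and interchanging integration order via Tonelli (which is valid since the integrand is nonnegative), I obtain
\[
\E\bigl(V_{\t_2}-V_{\t_1}\bigr)^{2}\le(\t_2-\t_1)\int_{\t_1}^{\t_2}\E\dot V_{\t}^{\,2}\,d\t\le(\t_2-\t_1)^{2}\sup_{\t\in[\t_1,\t_2]}\E\dot V_{\t}^{\,2},
\]
where the final step bounds the integrand by its supremum over the interval. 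This is exactly the stated inequality.

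The only possible obstacle is a measurability/integrability technicality, namely justifying the interchange of $\E$ and $\int d\t$. Since $\dot V_{\t}^{\,2}\ge 0$ and $(\t,\omega)\mapsto \dot V_{\t}(\omega)$ is jointly measurable (being a pointwise limit of continuous-in-$\t$ difference quotients of the measurable process $V_{\t}$), Tonelli's theorem applies unconditionally, so this step is automatic. If the supremum on the right is infinite the bound is trivial, and if it is finite then the sample paths $\dot V_{\t}$ have finite $L^{2}$-norm on $[\t_1,\t_2]$ almost surely, legitimising the use of Cauchy--Schwarz. Thus no real difficulty arises.
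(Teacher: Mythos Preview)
Your proof is correct and is the standard argument: integral representation via the fundamental theorem of calculus, Cauchy--Schwarz in $\t$, Tonelli to swap expectation and integral, then bound by the supremum. The paper itself does not prove this lemma but defers it to the companion paper \cite{contractionpaper}; the argument there is precisely the one you give.
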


\bibliographystyle{acm}
\bibliography{references}

\begin{thebibliography}{10}

\bibitem{Armagan2013}
{\sc Armagan, A., Dunson, D.~B., and Lee, J.}
\newblock Generalized double {P}areto shrinkage.
\newblock {\em Statistica Sinica 23\/} (2013), 119--143.

\bibitem{belitser:2015}
{\sc {Belitser}, E.}
\newblock {On coverage and local radial rates of {DDM}-credible sets}.
\newblock {\em ArXiv e-prints\/} (July 2014).

\bibitem{BelitserNurushev}
{\sc {Belitser}, E., and {Nurushev}, N.}
\newblock {Needles and straw in a haystack: empirical {B}ayes confidence for
  possibly sparse sequences}.
\newblock {\em ArXiv e-prints\/} (Nov. 2015).

\bibitem{Bhadra2015}
{\sc Bhadra, A., Datta, J., Polson, N.~G., and Willard, B.}
\newblock The horseshoe+ estimator of ultra-sparse signals.
\newblock arXiv:1502.00560v2, 2015.

\bibitem{Bhattacharya2014}
{\sc Bhattacharya, A., Pati, D., Pillai, N.~S., and Dunson, D.~B.}
\newblock Dirichlet-{L}aplace priors for optimal shrinkage.
\newblock arXiv:1401.5398, 2014.

\bibitem{Buhlman2011}
{\sc B\"uhlmann, P., and van~de Geer, S.}
\newblock {\em Statistics for High-Dimensional Data}.
\newblock Springer-Verlag Berlin Heidelberg, 2011.

\bibitem{Bull}
{\sc Bull, A.}
\newblock Honest adaptive confidence bands and self-similar functions.
\newblock {\em Electron. J. Statist. 6\/} (2012), 1490--1516.

\bibitem{Caron2008}
{\sc Caron, F., and Doucet, A.}
\newblock Sparse {B}ayesian nonparametric regression.
\newblock In {\em Proceedings of the 25th International Conference on Machine
  Learning\/} (New York, NY, USA, 2008), ICML '08, ACM, pp.~88--95.

\bibitem{Carvalho2009}
{\sc Carvalho, C.~M., Polson, N.~G., and Scott, J.~G.}
\newblock Handling sparsity via the horseshoe.
\newblock {\em Journal of Machine Learning Research, W\&CP 5\/} (2009), 73--80.

\bibitem{Carvalho2010}
{\sc Carvalho, C.~M., Polson, N.~G., and Scott, J.~G.}
\newblock The horseshoe estimator for sparse signals.
\newblock {\em Biometrika 97}, 2 (2010), 465--480.

\bibitem{castillo:nickl:2014}
{\sc Castillo, I., and Nickl, R.}
\newblock On the {B}ernstein von {M}ises phenomenon for nonparametric {B}ayes
  procedures.
\newblock {\em Ann. Statist. 42}, 5 (2014), 1941--1969.

\bibitem{Castillo2015}
{\sc Castillo, I., Schmidt-Hieber, J., and van~der Vaart, A.}
\newblock {B}ayesian linear regression with sparse priors.
\newblock {\em Ann. Statist. 43}, 5 (10 2015), 1986--2018.

\bibitem{Castillo2012}
{\sc Castillo, I., and Van~der Vaart, A.~W.}
\newblock Needles and straw in a haystack: Posterior concentration for possibly
  sparse sequences.
\newblock {\em Ann. Statist. 40}, 4 (2012), 2069--2101.

\bibitem{Datta2013}
{\sc Datta, J., and Ghosh, J.~K.}
\newblock Asymptotic properties of {B}ayes risk for the horseshoe prior.
\newblock {\em Bayesian Analysis 8}, 1 (2013), 111--132.

\bibitem{Ghosh2015}
{\sc Ghosh, P., and Chakrabarti, A.}
\newblock Posterior concentration properties of a general class of shrinkage
  estimators around nearly black vectors.
\newblock arXiv:1412.8161v2, 2015.

\bibitem{GineNickl}
{\sc Gin{\'e}, E., and Nickl, R.}
\newblock Confidence bands in density estimation.
\newblock {\em Ann. Statist. 38}, 2 (2010), 1122--1170.

\bibitem{Gramacy2014}
{\sc Gramacy, R.~B.}
\newblock {\em monomvn: Estimation for multivariate normal and Student-t data
  with monotone missingness}, 2014.
\newblock R package version 1.9-5.

\bibitem{Griffin2010}
{\sc Griffin, J.~E., and Brown, P.~J.}
\newblock Inference with normal-gamma prior distributions in regression
  problems.
\newblock {\em Bayesian Analysis 5}, 1 (2010), 171--188.

\bibitem{fasthorseshoe}
{\sc Hahn, R.~P., He, J., and Lopes, H.}
\newblock {\em fastHorseshoe: The Elliptical Slice Sampler for Bayesian
  Horseshoe Regression}, 2016.
\newblock R package version 0.1.0.

\bibitem{Jiang2009}
{\sc Jiang, W., and Zhang, C.-H.}
\newblock General maximum likelihood empirical {B}ayes estimation of normal
  means.
\newblock {\em Ann. Statist. 37}, 4 (08 2009), 1647--1684.

\bibitem{Johnson2010}
{\sc Johnson, V.~E., and Rossell, D.}
\newblock On the use of non-local prior densities in {B}ayesian hypothesis
  tests.
\newblock {\em J. R. Stat. Soc. Ser. B Stat. Methodol. 72}, 2 (2010), 143--170.

\bibitem{Johnstone2004}
{\sc Johnstone, I.~M., and Silverman, B.~W.}
\newblock Needles and straw in haystacks: Empirical {B}ayes estimates of
  possibly sparse sequences.
\newblock {\em Ann. Statist. 32}, 4 (2004), 1594--1649.

\bibitem{Li}
{\sc Li, K.-C.}
\newblock Honest confidence regions for nonparametric regression.
\newblock {\em Ann. Statist. 17}, 3 (1989), 1001--1008.

\bibitem{LiuYu}
{\sc Liu, H., and Yu, B.}
\newblock Asymptotic properties of {L}asso+m{LS} and {L}asso+{R}idge in sparse
  high-dimensional linear regression.
\newblock {\em Electron. J. Stat. 7\/} (2013), 3124--3169.

\bibitem{Makalic2015}
{\sc Makalic, E., and Schmidt, D.~F.}
\newblock A simple sampler for the horseshoe estimator.
\newblock arXiv:1508.03884, 2015.

\bibitem{nickl:szabo:2014}
{\sc {Nickl}, R., and {Szab{\'o}}, B.}
\newblock {A sharp adaptive confidence ball for self-similar functions}.
\newblock {\em to appear in Stochastics Processes and their Applications\/}
  (2014).

\bibitem{Nickl2013}
{\sc Nickl, R., and van~de Geer, S.}
\newblock Confidence sets in sparse regression.
\newblock {\em Ann. Statist. 41}, 6 (12 2013), 2852--2876.

\bibitem{PicTri}
{\sc Picard, D., and Tribouley, K.}
\newblock Adaptive confidence interval for pointwise curve estimation.
\newblock {\em Ann. Statist. 28}, 1 (2000), 298--335.

\bibitem{Polson2010}
{\sc Polson, N.~G., and Scott, J.~G.}
\newblock Shrink globally, act locally: Sparse {B}ayesian regularization and
  prediction.
\newblock In {\em Bayesian Statistics 9}, J.~Bernardo, M.~Bayarri, J.~Berger,
  A.~Dawid, D.~Heckerman, A.~Smith, and M.~West, Eds. Oxford University Press,
  2010.

\bibitem{Polson2012}
{\sc Polson, N.~G., and Scott, J.~G.}
\newblock Good, great or lucky? {S}creening for firms with sustained superior
  performance using heavy-tailed priors.
\newblock {\em The Annals of Applied Statistics 6}, 1 (2012), 161--185.

\bibitem{Polson2012-2}
{\sc Polson, N.~G., and Scott, J.~G.}
\newblock On the half-{C}auchy prior for a global scale parameter.
\newblock {\em Bayesian Analysis 7}, 4 (2012), 887--902.

\bibitem{ray:2015}
{\sc {Ray}, K.}
\newblock {Adaptive {B}ernstein-von {M}ises theorems in {G}aussian white
  noise}.
\newblock {\em ArXiv e-prints\/} (July 2014).

\bibitem{RobinsvdV}
{\sc Robins, J., and van~der Vaart, A.}
\newblock Adaptive nonparametric confidence sets.
\newblock {\em Ann. Statist. 34}, 1 (2006), 229--253.

\bibitem{Rockova2015}
{\sc Ro\u{c}kov\'a, V.}
\newblock Bayesian estimation of sparse signals with a continuous
  spike-and-slab prior.
\newblock submitted manuscript, available at
  \url{http://stat.wharton.upenn.edu/~vrockova/rockova2015.pdf}, 2015.

\bibitem{rousseau:sz:2016}
{\sc {Rousseau}, J., and {Szabo}, B.}
\newblock {Asymptotic frequentist coverage properties of Bayesian credible sets
  for sieve priors in general settings}.
\newblock {\em ArXiv e-prints\/} (Sept. 2016).

\bibitem{Scott2010-2}
{\sc Scott, J.~G.}
\newblock Parameter expansion in local-shrinkage models.
\newblock arXiv:1010.5265, 2010.

\bibitem{Scott2011}
{\sc Scott, J.~G.}
\newblock Bayesian estimation of intensity surfaces on the sphere via needlet
  shrinkage and selection.
\newblock {\em Bayesian Analysis 6}, 2 (2011), 307--328.

\bibitem{serra:krivobokova:2014}
{\sc {Serra}, P., and {Krivobokova}, T.}
\newblock {Adaptive empirical {B}ayesian smoothing splines}.
\newblock {\em ArXiv e-prints\/} (Nov. 2014).

\bibitem{Sniekers1}
{\sc Sniekers, S., and van~der Vaart, A.}
\newblock Adaptive {B}ayesian credible sets in regression with a {G}aussian
  process prior.
\newblock {\em Electron. J. Stat. 9}, 2 (2015), 2475--2527.

\bibitem{Sniekers3}
{\sc Sniekers, S., and van~der Vaart, A.}
\newblock Adaptive credible bands in nonparametric regression with {B}rownian
  motion prior.
\newblock {\em preprint\/} (2015).

\bibitem{Sniekers2}
{\sc Sniekers, S., and van~der Vaart, A.}
\newblock Credible sets in the fixed design model with {B}rownian motion prior.
\newblock {\em J. Statist. Plann. Inference 166\/} (2015), 78--86.

\bibitem{szabo:vaart:zanten:2015c}
{\sc Szab\'o, B., van~der Vaart, A., and van Zanten, H.}
\newblock Honest {B}ayesian confidence sets for the {L2}-norm.
\newblock {\em Journal of Statistical Planning and Inference 166\/} (2015), 36
  -- 51.
\newblock Special Issue on Bayesian Nonparametrics.

\bibitem{SzVZ2}
{\sc Szabó, B., van~der Vaart, A.~W., and van Zanten, J.~H.}
\newblock Frequentist coverage of adaptive nonparametric {B}ayesian credible
  sets.
\newblock {\em Ann. Statist. 43}, 4 (08 2015), 1391--1428.

\bibitem{Tibshirani1996}
{\sc Tibshirani, R.}
\newblock Regression shrinkage and selection via the {L}asso.
\newblock {\em J. R. Stat. Soc. Ser. B Stat. Methodol. 58}, 1 (1996), 267--288.

\bibitem{Geeretal}
{\sc van~de Geer, S., B{\"u}hlmann, P., Ritov, Y., and Dezeure, R.}
\newblock On asymptotically optimal confidence regions and tests for
  high-dimensional models.
\newblock {\em Ann. Statist. 42}, 3 (2014), 1166--1202.

\bibitem{vdGeer2011}
{\sc van~de Geer, S., Bühlmann, P., and Zhou, S.}
\newblock The adaptive and the thresholded {L}asso for potentially misspecified
  models (and a lower bound for the {L}asso).
\newblock {\em Electron. J. Statist. 5\/} (2011), 688--749.

\bibitem{horseshoepackage}
{\sc {van der Pas}, S., Scott, J., Chakraborty, A., and Bhattacharya, A.}
\newblock {\em horseshoe: Implementation of the Horseshoe Prior}, 2016.
\newblock R package version 0.1.0.

\bibitem{contractionpaper}
{\sc van~der Pas, S., Szab\'o, B., and van~der Vaart, A.}
\newblock Adaptive posterior contraction rates for the horseshoe.
\newblock preprint, 2017.

\bibitem{vdPas}
{\sc van~der Pas, S.~L., Kleijn, B. J.~K., and van~der Vaart, A.~W.}
\newblock The horseshoe estimator: Posterior concentration around nearly black
  vectors.
\newblock {\em Electron. J. Statist. 8}, 2 (2014), 2585--2618.

\bibitem{vdVW}
{\sc van~der Vaart, A.~W., and Wellner, J.~A.}
\newblock {\em Weak convergence and empirical processes}.
\newblock Springer Series in Statistics. Springer-Verlag, New York, 1996.
\newblock With applications to statistics.

\bibitem{ZhangZhang}
{\sc Zhang, C.-H., and Zhang, S.~S.}
\newblock Confidence intervals for low dimensional parameters in high
  dimensional linear models.
\newblock {\em J. R. Stat. Soc. Ser. B. Stat. Methodol. 76}, 1 (2014),
  217--242.

\end{thebibliography}
\end{document}